\documentclass{article}

\usepackage[noend]{algpseudocode}
\usepackage{algorithm}

\newcommand*\Let[2]{\State #1 $\gets$ #2}
\newcommand{\Hess}{\operatorname{Hess}}

\usepackage{constants}


\usepackage[margin=3.2cm]{geometry}

\usepackage{setspace}

\usepackage{graphics}
\usepackage{graphicx}

\usepackage{docmute}

\usepackage[utf8]{inputenc}

\usepackage{mathtools}

\usepackage{amsmath}


\usepackage{amsfonts}

\usepackage{amssymb}

\usepackage{amsthm}
\usepackage{aliascnt}


\usepackage{makeidx}
\makeindex

\usepackage{textcomp}
\usepackage[sb]{libertine}
\usepackage[varqu,varl]{zi4}%
\usepackage[libertine,bigdelims,vvarbb]{newtxmath}
\usepackage[supstfm=libertinesups,supscaled=1.2,raised=-.13em]{superiors}
\usepackage{bm}
\useosf

\usepackage[breaklinks,linktocpage]{hyperref}
\usepackage[hyperpageref]{backref}

\usepackage{url}
\usepackage{breakurl}

\usepackage{tikz-cd}

\usepackage{enumitem}
\usepackage[UKenglish]{babel}
\usepackage[makeroom]{cancel}
\setlength{\parindent}{0cm}
\usepackage{pstricks-add}

\definecolor{shadecolor}{rgb}{0.88,0.91,0.95}       
\usepackage{tcolorbox}

\usepackage{dynkin-diagrams}
\usetikzlibrary{backgrounds}

\usepackage{booktabs}

\usepackage{fancyhdr}
\pagestyle{fancy}

\fancyhf{}

\cfoot{\thepage}

\setlength{\parindent}{0cm}
\setlength{\fboxsep}{12pt}

\newcommand{\R}{\mathbb{R}}

\newcommand{\sign}{\operatorname{sign}}

\newcommand{\spann}{\operatorname{span}}

\renewcommand{\Im}{\operatorname{Im}}

\newcommand{\Ric}{\operatorname{Ric}}

\newcommand{\CP}{\mathbb{CP}}

\renewcommand{\|}[1]{\left| \left| #1 \right| \right|}

\newcommand{\<}{\langle}
\renewcommand{\>}{\rangle}

\newcommand{\grad}{\operatorname{grad}}

\newcommand{\diam}{\operatorname{diam}}

\renewcommand{\d}{\operatorname{d} \!}

\renewcommand{\tilde}[1]{\widetilde{#1}}

\newcommand\Item[1][]{%
  \ifx\relax#1\relax  \item \else \item[#1] \fi
  \abovedisplayskip=0pt\abovedisplayshortskip=0pt~\vspace*{-\baselineskip}}

\usepackage[capitalise]{cleveref}

\numberwithin{equation}{section}

\newtheorem{proposition}[equation]{Proposition}
\crefname{proposition}{Proposition}{Propositions}
\newtheorem*{proposition*}{Proposition}
\crefname{proposition*}{Proposition}{Propositions}

\newtheorem{lemma}[equation]{Lemma}
\crefname{lemma}{Lemma}{Lemmas}

\newtheorem{corollary}[equation]{Corollary}
\crefname{corollary}{Corollary}{Corollaries}

\newtheorem*{corollary*}{Corollary}
\crefname{corollary*}{Corollary}{Corollaries}

\newtheorem{theorem}[equation]{Theorem}
\crefname{theorem}{Theorem}{Theorems}

\crefname{task}{Task}{Task}

\crefname{conjecture}{Conjecture}{Conjectures}

\crefname{algorithm}{Algorithm}{algorithms}

\newtheorem*{theorem*}{Theorem}
\crefname{theorem*}{Theorem}{Theorems}

\crefname{claim}{Claim}{Claims}

\theoremstyle{remark}

\crefname{question}{Question}{Questions}

\newtheorem{definition}[equation]{Definition}
\crefname{definition}{Definition}{Definitions}

\newtheorem{example}[equation]{Example}
\crefname{example}{Example}{Examples}

\newtheorem{remark}[equation]{Remark}
\crefname{remark}{Remark}{Remarks}

\crefname{assumption}{Assumption}{Assumptions}

\crefname{problem}{Problem}{Problems}
\usepackage{pstricks-add}

\DeclareMathOperator*{\argmin}{arg\,min}
\DeclareMathOperator*{\argmax}{arg\,max}

\title{Lower bounds for the reach and applications}
\author{Daniel Platt and Raúl Sánchez Galán}
\date{\today}

\usepackage{todonotes}

\usepackage{pgfplots}
\pgfplotsset{compat=1.15}
\usepackage{mathrsfs}
\usetikzlibrary{arrows}

\hyphenation{Case-Two-Boxes}

\begin{document}

\maketitle

\vspace{-0.7cm}

\begin{abstract}
    The \emph{reach} of a submanifold of $\R^N$ is defined as the largest radius of a tubular neighbourhood around the submanifold that avoids self-intersections. 
    While essential in geometric and topological applications, computing the reach explicitly is notoriously difficult. 
    In this paper, we introduce a rigorous and practical method to compute a guaranteed lower bound for the reach of a submanifold described as the common zero-set of finitely many smooth functions, not necessarily polynomials. 
    Our algorithm uses techniques from numerically verified proofs and is particularly suitable for high-performance parallel implementations.

    We illustrate the utility of this method through several applications. 
    Of special note is a novel algorithm for computing the homology groups of planar curves, achieved by constructing a cubical complex that deformation retracts onto the curve--an approach potentially extendable to higher-dimensional manifolds. 
    Additional applications include an improved comparison inequality between intrinsic and extrinsic distances for submanifolds of $\R^N$, lower bounds for the first eigenvalue of the Laplacian on algebraic varieties and explicit bounds on how much smooth varieties can be deformed without changing their diffeomorphism type.
\end{abstract}

\vspace{-0.5cm}

\tableofcontents

\section{Introduction}

Consider a closed smooth submanifold $M \subset \R^N$, defined as the vanishing set $Z(f)$ of a smooth function $f:\mathbb{R}^N \rightarrow \mathbb{R}$. An important geometric quantity associated with $M$ is its \emph{reach}, originally introduced by Federer in \cite[p.419]{Federer1959}. The reach is defined as the largest number $\tau$ such that any point in the ambient space $\R^N$ within distance $\tau$ of $M$ has a unique nearest point on $M$. Intuitively, the reach characterizes how finely one must sample points from a manifold to accurately reconstruct its geometric and topological structure. Computing the reach exactly, however, is notoriously difficult, and thus, reliable lower bounds are of significant practical and theoretical interest.

Previous approaches to computing such lower bounds typically fall into two categories:
probabilistic methods, including estimation techniques involving the medial axis (e.g., \cite{Aamari2019, Attali2009, Cuevas2014}), and
the guaranteed deterministic method from \cite{DiRocco2022,2023DiRocco}.

An important application of an estimation of the reach is the computation of homological and geometric properties from discrete samples.
Sampling strategies from the literature are \cite[Section 1]{DiRocco2022}, the approach used in physics from \cite[Section 3.2]{Douglas2008}, and the semi-algebraic set sampling in \cite{Burgisser2019}, which also includes a review of previous work on this problem.
    
In this article, we present a new algorithm to obtain a provable lower bound for the $L^1$-norm of the gradient $|\nabla f|_1$ on $M=Z(f)$, see \cref{section:lower-bound-for-df}. 
To our knowledge, it is the first method that works for zero sets of functions that are not polynomials.
Also, it can compute a lower bound for the reach in some examples in which the existing algorithms fail due to complexity. 
There are other examples where existing algorithms work, but our algorithm fails. 
These cases are discussed in \cref{remark:algo-comparison}.

Our approach builds on well-established techniques developed in the context of numerically verified proofs ---see, for example, \cite[Section 2.3]{GomezSerrano2019} and the references therein---and employs a subdivision strategy of a bounding box in the ambient space containing $Z(f)$.
The bounding box is decomposed into sufficiently small boxes, classified according to whether they contain points of $M$ or yield explicit gradient lower bounds. More precisely, \cref{algorithm:lower-bound-for-df} in the article works by subdividing the bounding box into boxes small enough so that each of the boxes satisfies a sufficient (but not necessary) criterion for one of the following two things:
(a) the box contains no point of $M$ (called "CaseOneBoxes" below);
(b) on the box, $|\nabla f|_1$ is bounded from below (called "CaseTwoBoxes" below).
Given such a subdivision, one obtains a lower bound for $|\nabla f|_1$ on all of $M$. Our first main result guarantees the correctness of this subdivision-based algorithm (\cref{proposition:df-algorithm-correctness}):

\begin{proposition}
    \label{proposition:main-prop}
    Assume $Z(f) \subset [-M_1,M_1]^N$ and $|\nabla f|_2 \leq M_2$ and $|\Hess f|_2 \leq M_3$ on $[-M_1,M_1]^N$.
    If $Z(f)$ is non-singular, then \cref{algorithm:lower-bound-for-df} terminates after finitely many steps, and yields a set $B \subset \R^N$ with $Z(f) \subset B$ such that
    \begin{align*}
        \left| \nabla f|_{B} \right|_1 > M_3 N^{3/2} \frac{\epsilon}{2}
    \end{align*}
    as well as
    \begin{align*}
        \left|
        f|_{[-M_1,M_1]^N \setminus B}
        \right|
        >
        \sqrt{N} \frac{\epsilon}{2} M_2,
    \end{align*}
    where $\epsilon$ is the side length of the smallest box in CaseOneBoxes$\,\cup\,$CaseTwoBoxes.
    Here $(v_1,\dots,v_N)|_1:=|v_1|+\dots+|v_N|$ denotes the $1$-norm of the vector $(v_1,\dots,v_N)$.
\end{proposition}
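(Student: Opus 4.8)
The plan is to split the argument into three parts: (i) showing that the subdivision process terminates, (ii) verifying the gradient lower bound on $B$, and (iii) verifying the lower bound for $|f|$ off $B$. For termination, the key point is that since $Z(f)$ is compact and non-singular, $|\nabla f|_2$ is bounded below by some $\delta > 0$ on a neighbourhood of $Z(f)$, while $|f|$ is bounded below by some $\eta > 0$ on the complement of that neighbourhood (both by compactness). A box of side length $\epsilon$ has diameter $\sqrt{N}\epsilon$, so by the mean value inequality applied to $f$ and to $\nabla f$ (using the bounds $M_2$ on $|\nabla f|_2$ and $M_3$ on $|\Hess f|_2$), once $\epsilon$ is small enough every box either satisfies the CaseOneBox criterion (a lower bound on $|f|$ derived from its centre value plus the $M_2\sqrt{N}\epsilon$ oscillation bound) or the CaseTwoBox criterion (a lower bound on $|\nabla f|_1$ derived from its centre gradient plus the $M_3 N \epsilon$-type oscillation bound on $\nabla f$ — one factor $\sqrt{N}$ from the diameter, one from passing between $1$- and $2$-norms). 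So after finitely many bisections every leaf box is classified, and the algorithm stops.

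For part (ii): $B$ is (by construction in \cref{algorithm:lower-bound-for-df}) the union of the CaseTwoBoxes, and $Z(f) \subset B$ because every CaseOneBox is provably disjoint from $Z(f)$. On each CaseTwoBox the classification criterion is precisely the statement that the centre value of $|\nabla f|_1$ exceeds the worst-case oscillation $M_3 N^{3/2} \epsilon$ over a box of that size — wait, more carefully: the criterion is designed so that the infimum of $|\nabla f|_1$ over the box exceeds $M_3 N^{3/2}\epsilon/2$. Here the factor is: diameter $\sqrt{N}\epsilon$, times Lipschitz constant $M_3$ of $\nabla f$ in the $2$-norm, times $\sqrt{N}$ to convert the resulting $2$-norm bound on the increment of $\nabla f$ into a $1$-norm bound, gives oscillation at most $M_3 N \epsilon$ in the $1$-norm; the extra $\sqrt{N}$ and the $1/2$ come from how the test box is set up relative to $\epsilon$ (the smallest box has side $\epsilon$, so a generic CaseTwoBox has side $\geq \epsilon$, and the criterion must be uniform). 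I would state the per-box criterion explicitly and then just take the infimum over all CaseTwoBoxes.

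For part (iii): $[-M_1,M_1]^N \setminus B$ is contained in the union of the CaseOneBoxes, and on each of those the criterion guarantees $|f| > \sqrt{N}\,\epsilon\, M_2 / 2$ by the same mean-value reasoning applied to $f$ with its Lipschitz constant $M_2$. Taking the infimum gives the claimed bound. The main obstacle — and the only place real care is needed — is bookkeeping the constants: matching the geometric factors ($\sqrt{N}$ for box diameter, another $\sqrt{N}$ for the $\ell^1$–$\ell^2$ conversion on $\nabla f$, the $1/2$ from side-length normalisation) to the exact thresholds in \cref{algorithm:lower-bound-for-df}, and making sure the "smallest box has side $\epsilon$" quantity is the right uniform handle when boxes have been subdivided to different depths. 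Everything else is a routine application of the mean value inequality together with compactness; I would present the compactness termination argument first, then the two inequalities as short corollaries of the box criteria.
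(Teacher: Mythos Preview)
Your plan is correct and matches the paper's argument in structure: mean value inequality on each box for $f$ (CaseOne) and for $\nabla f$ (CaseTwo), with termination forced by non-singularity of $Z(f)$. Two small points of comparison. First, the paper argues termination by contradiction rather than forward compactness: an infinite chain of never-classified nested boxes shrinks to a point $p^*$ where both $f(p^*)=0$ and $|\nabla f(p^*)|_1=0$, i.e.\ a singular point of $Z(f)$. This is equivalent to your approach but avoids naming the auxiliary constants $\delta,\eta$. Second, your confusion about the ``extra $\sqrt{N}$'' is not about box-size normalisation: the paper bounds each component $|\partial_i f(x)-\partial_i f(m)|$ separately by $|\Hess_i f|_{\R^N}\cdot|x-m|_{\R^N}\le M_3\cdot\sqrt{N}\,\epsilon/2$ (using that each Hessian row has $2$-norm at most $M_3$) and then sums over $i=1,\dots,N$, producing $M_3 N^{3/2}\epsilon/2$. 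Your route via the operator norm and then $\ell^2\to\ell^1$ would actually give the sharper oscillation bound $M_3 N\epsilon/2$; the algorithm's threshold $N^{3/2}\epsilon M_3$ was simply calibrated to the paper's looser componentwise estimate, so your computation is fine and the stated inequality follows a fortiori.
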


We then explain how such a bound on the gradient of $f$ can be used to obtain a lower bound for the reach of $M$.
Roughly speaking, if $\nabla f$ changes its direction quickly along $M$, then $M$ has large curvature.
If one has a lower bound for $\nabla f$ by virtue of the previous proposition, and an upper bound for $\Hess f$ (which is easy to come by), then that together means that $\nabla f$ cannot change direction arbitrarily fast.
In practice, one can turn this into a useful bound for the reach of $M$, which is the main use of the above proposition.
This is the following result (\cref{corollary:lower-bound-for-reach}):

\begin{corollary}
    Let $M=Z(f) \subset U$ for a convex set $U \subset \R^N$. 
    Let $|{\Hess f}|_2
        \leq
        \Cl{one-func-Hess-bound}$ on $U$ and $ |\nabla f|_1
        \geq
        \Cl{one-func-grad-bound}$ on $M$,
    then
    \begin{align*}
        \tau
        \geq
        \frac{C_2}{\sqrt{N} C_1}.
    \end{align*}
\end{corollary}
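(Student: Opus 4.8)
The plan is to use the classical characterization of the reach of $M=Z(f)$ in terms of how fast the unit normal field turns along $M$. Concretely, by Federer's theorem the reach $\tau$ equals the infimum over pairs of distinct points $p,q\in M$ of $\frac{|p-q|^2}{2\,d(p-q,\,T_pM)}$, where $T_pM$ is the tangent space. Since $M=Z(f)$ is a hypersurface, $T_pM=\ker\nabla f(p)$ and the distance $d(p-q,T_pM)$ is exactly $\frac{|\langle \nabla f(p),\,p-q\rangle|}{|\nabla f(p)|_2}$. So the first step is to reduce the problem to estimating $|\langle \nabla f(p),p-q\rangle|$ from above in terms of $|p-q|^2$, uniformly in $p,q\in M$.

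For the main estimate, fix $p,q\in M$ and consider the straight segment $\gamma(t)=p+t(q-p)$, $t\in[0,1]$, which stays in the convex set $U$. Since $f(p)=f(q)=0$, the function $g(t):=f(\gamma(t))$ vanishes at both endpoints, so by the mean value theorem there is $t_0\in(0,1)$ with $g'(t_0)=\langle\nabla f(\gamma(t_0)),q-p\rangle=0$. Then I would write
\begin{align*}
    \langle\nabla f(p),\,q-p\rangle
    =\langle\nabla f(p)-\nabla f(\gamma(t_0)),\,q-p\rangle
    =-\Big\langle\int_0^{t_0}\Hess f(\gamma(s))\,(q-p)\,ds,\;q-p\Big\rangle,
\end{align*}
and bound the right-hand side in absolute value by $|\Hess f|_2\,|q-p|^2\le C_1|p-q|^2$ using the operator-norm hypothesis on $U$. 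This gives $d(p-q,T_pM)\le \dfrac{C_1|p-q|^2}{|\nabla f(p)|_2}$.

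Combining with Federer's formula, $\tau\ge \inf_{p\ne q}\dfrac{|p-q|^2}{2\,d(p-q,T_pM)}\ge \inf_{p}\dfrac{|\nabla f(p)|_2}{2C_1}$. The last ingredient is to pass from the $2$-norm of $\nabla f$ to the $1$-norm: since $|\nabla f|_2\ge \frac{1}{\sqrt N}|\nabla f|_1\ge \frac{C_2}{\sqrt N}$ on $M$ by hypothesis, we obtain $\tau\ge \dfrac{C_2}{2\sqrt N\,C_1}$. I would then remark that the factor $\tfrac12$ can be absorbed or improved; if the intended statement is exactly $\tau\ge \frac{C_2}{\sqrt N C_1}$ without the $2$, one instead uses the sharper form of Federer's bound (the reach is the infimum of the radii of curvature of geodesics and of half the distance between points with a common normal, and in the "bottleneck" case the relevant quantity is $\frac{|p-q|}{2}$ only when $q-p\perp T_pM$; a more careful bookkeeping using that at a bottleneck $\nabla f(p)\parallel (p-q)$ removes the spurious constant), or one rescales the normalization of the reach accordingly.

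The main obstacle I expect is getting the constant exactly right: the naive mean-value argument above produces a factor $2$, and matching the clean bound $\tau\ge C_2/(\sqrt N C_1)$ requires either invoking the sharp version of Federer's reach characterization (infimum of $\rho(p,q):=|q-p|^2/(2\,d(q-p,T_pM))$ together with the observation that the minimizing configuration forces alignment of $\nabla f(p)$ with $q-p$, which improves the constant), or a slightly more delicate two-sided use of the mean value theorem at both $p$ and $q$. The differential-geometric content — reducing to the normal-turning estimate and controlling it by $\Hess f$ — is routine; the bookkeeping of constants and norm conversions is where care is needed.
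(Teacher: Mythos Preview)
Your argument is correct and yields $\tau \geq \dfrac{C_2}{2\sqrt{N}\,C_1}$, which is exactly what the paper proves as its Corollary~4.3; the version of the statement you were handed (from the introduction) is simply missing the factor~$2$, so your speculation about how to eliminate it is moot---the paper does not eliminate it either, and your ``more careful bookkeeping'' at a bottleneck would not remove it.

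Your route, however, differs from the paper's. The paper splits the problem via the characterisation $\tau=\min\{\eta,\rho\}$ (Theorem~2.6): it bounds the minimal radius of curvature $\rho\geq C_2/(\sqrt{N}C_1)$ using the identity $|\operatorname{II}(v,w)|=|\Hess f(v,w)|/|\nabla f|_{\R^N}$ (Proposition~4.1), and separately bounds the bottleneck radius $\eta=\lambda/2\geq C_2/(2\sqrt{N}C_1)$ by a Rolle/mean-value argument along the segment joining a bottleneck pair (Proposition~4.2). You instead invoke the global formula $\tau=\inf_{p\neq q}\tfrac{|p-q|^2}{2\,d(p-q,T_pM)}$ and bound every term of the infimum at once with essentially the same Rolle/MVT estimate the paper uses for Proposition~4.2. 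This is slicker and avoids treating curvature and bottlenecks separately; the price is that the paper's split yields the sharper curvature bound $\rho\geq C_2/(\sqrt{N}C_1)$ without the factor~$2$, which your uniform estimate loses (harmlessly, since the bottleneck side is binding anyway). One caveat on attribution: Federer's Theorem~4.18 only gives the inequality $\tau\leq\inf(\cdots)$; the direction you actually need, $\tau\geq\inf(\cdots)$, is the nontrivial one and is equivalent to $\tau=\min\{\eta,\rho\}$, so your argument and the paper's ultimately rest on the same structural input.
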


All of this generalises to submanifolds of higher codimension, which are defined as the vanishing sets of multiple smooth functions.
This requires some calculations in linear algebra, and we have moved the explanation of this generalisation to \cref{section:generalise-to-f1-...-fk}.

The availability of a computable lower bound for the reach has numerous applications across geometry, topology, and computational mathematics. In particular, we highlight:

\begin{enumerate}
    \item Extrinsic versus intrinsic distances (\cref{application:extrinsic-instric-distance-estimate}):
    Improved estimates relating intrinsic manifold distances to ambient-space distances in $\R^N$. This may have future applications in numerically verified proofs.

    \item Homology groups (\cref{section:application-computation-of-homology}):
    As explained above, one of the main problems in the research area is the computation of homology groups of a variety from finite data.
    A lower bound for the reach can be used as input for existing homology computation algorithms, but in this section, we describe an alternative approach that does not use sampling.
    Our proof works only in dimension $2$, and we leave the general case for future work.

    \item Eigenvalue estimate for the Laplacian 
    \Cref{section:application-lowest-eigenvalue-bound}:
    the lowest non-zero eigenvalue of the Laplace operator is a quantity of interest in geometric analysis.
    We explain how a lower bound for the reach immediately implies a lower bound for the lowest non-zero eigenvalue of the Laplace operator.
    Because we are using a naive estimate for the diameter of a submanifold in one step, our lower bound is very far from being sharp, and it is in fact so small that it is not usable in computations.
    There is hope this can be improved in the future, but for now, the result is still of theoretical interest.

    \item Stable deformation of algebraic varieties
    \Cref{section:application-deforming-algebraic-varieties}:
    given a smooth variety, small deformations of it remain smooth.
    Using the output of \cref{proposition:main-prop} (which is a bit more than just the reach), we immediately obtain an effective bound which guarantees that deformations that are smaller than the given bound are smooth.
    This is important in applications in pure mathematics, where one sometimes wants to deform a variety without changing its diffeomorphism type.
\end{enumerate}

The paper is structured as follows:
in \cref{section:background} we define the reach of a manifold and set up some other notation.
In \cref{section:lower-bound-for-df} we explain the algorithm for obtaining a lower bound for $|\nabla f|_1$ and prove its correctness.
We then use this bound to obtain a lower bound for the reach in \cref{section:lower-bound-for-reach}.
In \cref{application:extrinsic-instric-distance-estimate,section:application-computation-of-homology,section:application-lowest-eigenvalue-bound,section:application-deforming-algebraic-varieties} we marshal the applications listed above.
In \cref{section:tubular-nbhd} we give a proof of the quantitative tubular neighbourhood theorem, which is a folklore result for which we could not find a proof in the literature.

\textbf{Acknowledgments.}
The authors thank Oliver Gäfvert for helpful conversations.
D.P. is supported by the Eric and Wendy Schmidt AI in Science Postdoctoral Fellowship, funded by Schmidt Sciences.

\section{Background}
\label{section:background}

Throughout the paper, $M$ will be a smooth, closed embedded submanifold of the Euclidean space $\R^N$. 
We are particularly interested in the case where $M$ is given as the zero set of a finite collection of smooth functions, say $M=Z(f_1,\dots,f_k)$.
The special case that $M$ is a smooth real algebraic variety, i.e. the $f_i$ are real polynomials, is typically of most interest, though our methods apply more generally to smooth functions.

\begin{definition}
    \label{definition:distance-functions}
    For $x=(x_1,\dots,x_N),y=(y_1,\dots,y_N) \in \R^N$ we write
    \begin{align*}
        |x-y|_{\R^N}
        =
        \sqrt{(x_1-y_1)^2 + \dots + (x_N-y_N)^2}
    \end{align*}
    for its Euclidean distance.
    Given a point $x\in \R^n$, and a subset $S \subset \R^N$ the distance from $x$ to $S$ is defined as
    \begin{align*}
       d(x,S) := \inf_{s\in S}  |s-x|_{\R^N}.
    \end{align*}
    If $x,y \in M$, then we also have the geodesic distance on $M$ defined by the induced Riemannian metric, which we denote by $d_M(x,y)$.
\end{definition}

\begin{definition}[\cite{Federer1959}]
    For $p \in \R^N$, let $\pi_M(p)$ be the set of points $ x$ in $M$ such that $d(p,M) = | p- x |_{\R^N}$. 
    Let $U^r_M$ be the set of points in $\R^N$ whose distance to $M$ is less than $r$,
    \begin{align*}
     U^r_M = \{p \in \R^n: d(p,M)< r \}.  
    \end{align*}
    The \emph{reach} of $M$ in $\R^N$, denoted $\tau$ is
\begin{align}
    \label{equation:reach-def}
    \tau  = \sup \{ r \geq 0 : |\pi_M(p)|=1, \, \forall p \in U^r_M\}.
\end{align}
\end{definition}

Let $NM=\{ (x,v): x \in M \subset{\R^N}, v \in T_xM^\perp \}$ denote the normal bundle of $M$.
Throughout the paper, we will view $T_xM^\perp$ as a subspace of $\R^N$.
We then define:
\begin{align*}
 V^r = \{(x,v) \in NM: |v|_{\R^N} < r \} \subset NM.   
\end{align*}

The tubular neighbourhood theorem states that $M$ has a neighbourhood inside $\R^N$, which looks like a neighbourhood of the zero section within the normal bundle of $M$.
In fact, the endpoint map,
\begin{align*}
    \begin{split}
    E: V^\tau & \rightarrow U^\tau_M \\
      (x,v)      &\mapsto x+v,
    \end{split}
\end{align*}
is a diffeomorphism (\cref{theorem:tubular-nbhd-theorem}).
Furthermore, $r=\tau$ is the largest value of $r$ so that $E: V^r \rightarrow U^r_M$ is injective.

The following useful result follows easily:

\begin{proposition}
    \label{proposition:small-ball-has-connected-intersection-with-M}
    Let $M=Z(f) \subset \R^N$ and $\tau$ be the reach of $M$.
    Let $B(r)$ be an open ball of radius $r \leq \tau$ centred around an arbitrary point in $\R^N$.
    Then $M \cap B(r)$ has at most one connected component.
\end{proposition}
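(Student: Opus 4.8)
The plan is to reduce the statement to an analysis of the squared-distance function from the centre of the ball and then to flow down its gradient. Write $p$ for the centre, so $B(r)=B(p,r)$. If $d(p,M)\geq r$ then $M\cap B(p,r)=\emptyset$ and there is nothing to prove, so assume $d(p,M)<r\leq\tau$. Then $d(p,M)<\tau$, so $p\in U^\tau_M$, and by the tubular neighbourhood theorem (injectivity of $E$ on $V^\tau$, recorded above) the nearest-point set $\pi_M(p)$ consists of a single point $x_0$; moreover $|x_0-p|_{\R^N}=d(p,M)<r$, so $x_0\in M\cap B(p,r)$.

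Next consider $h\colon M\to\R$, $h(x)=|x-p|_{\R^N}^2$. Then $M\cap B(p,r)=\{x\in M: h(x)<r^2\}$ is a sublevel set of $h$, and $x_0$ is the global minimum of $h$ because $d(p,M)$ is. The key claim is that $x_0$ is the \emph{only} critical point of $h$ lying in $B(p,r)$: if $x\in M\cap B(p,r)$ is critical for $h$, then $x-p$ is orthogonal to $T_xM$, i.e. $p-x\in T_xM^\perp$, and $|p-x|_{\R^N}<r\leq\tau$, so $(x,p-x)\in V^\tau$ with $E(x,p-x)=p=E(x_0,p-x_0)$; injectivity of $E$ on $V^\tau$ forces $x=x_0$.

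Finally, given an arbitrary $y\in M\cap B(p,r)$, flow from $y$ along the negative gradient of $h$ (for the induced Riemannian metric on $M$). Since $h$ is non-increasing along this flow and $h(y)<r^2$, the trajectory stays inside $\{x\in M:h(x)\leq h(y)\}$, which is closed and bounded, hence compact (as $M$ is closed in $\R^N$), and contained in $B(p,r)$. Therefore the forward trajectory exists for all time and, by standard gradient-flow theory, its $\omega$-limit set is a non-empty connected subset of the critical set of $h$ contained in this compact region; by the previous paragraph that set is $\{x_0\}$, so the flow converges to $x_0$. The flow line together with its limit point $x_0$ is thus a path in $M\cap B(p,r)$ joining $y$ to $x_0$, and since $y$ was arbitrary, $M\cap B(p,r)$ is path-connected; in particular it has at most one connected component.

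The step I expect to require the most care is the soft-analysis part of the last paragraph: one must verify that the chosen sublevel set is genuinely compact so that the negative gradient flow neither escapes to infinity nor leaves the ball, and invoke convergence of bounded gradient trajectories to the critical set in a form that does not assume $h$ is Morse. By contrast, the uniqueness of the critical point is an immediate consequence of the tubular neighbourhood theorem, and the reduction to the non-empty case is purely formal.
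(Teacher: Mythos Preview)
Your argument is correct, and it shares with the paper the decisive observation: a critical point of $h(x)=|x-p|^2$ on $M$ inside $B(p,r)$ yields a preimage of $p$ under the endpoint map $E$ in $V^\tau$, so injectivity of $E$ forces uniqueness of such critical points. Where you diverge is in how you exploit this. The paper argues by contradiction in three lines: if $M\cap B(p,r)$ had two components $C_1,C_2$, then each $C_i$ (being open in $M$) contains an interior minimiser $c_i$ of the distance to $p$; these are two distinct critical points of $h$ in $B(p,r)$, contradicting the uniqueness just noted. No flow, no compactness of sublevel sets, no $\omega$-limit analysis.

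Your route, by contrast, constructs an actual path from an arbitrary $y$ to $x_0$ via the negative gradient flow, thereby proving the slightly stronger conclusion of \emph{path}-connectedness. The price is exactly the soft-analysis paragraph you flagged: compactness of the sublevel set and convergence of bounded gradient trajectories to the (here singleton) critical set. Those facts are standard and your invocation of them is sound, but they are overkill for the proposition as stated; the paper's two-critical-points contradiction is the cleaner packaging.
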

\begin{proof}
Assume the intersection $M \cap B(r)$ has two connected components $C_1$ and $C_2$.
Denote the centre of $B(r)$ by $x$ and let $c_i \in C_i$ be a closest point to $x$.
We see as in \cref{theorem:tubular-nbhd-theorem} that $c_i-x \perp T_{c_i} M$.
Thus, $x=E(c_1,c_1-x)=E(c_2,c_2-x)$ which contradicts the injectivity of the endpoint map $E$.
\end{proof}

The reach is related to Riemannian geometry through the notion of principal curvature:

\begin{definition}
    \label{definition:second-fundamental-form}
    The \emph{second fundamental form} is the operator
    \begin{align}
    \begin{split}
        \operatorname{II}: \mathfrak{X}(M) \times \mathfrak{X}(M) &\rightarrow \mathfrak{X}(M)^\perp
        \\
        (X,Y) & \mapsto \operatorname{nor}  \nabla_X Y,
        \end{split}
    \end{align}
    where $\nabla$ is the Levi-Civita connection on $\R^N$ and $\operatorname{nor}$ denotes the projection onto the normal bundle of $M$.

    Let $n$ be a normal vector perpendicular to $M$ at $x$.
    If we assume that the local coordinates are chosen so that the induced metric at $x \in M$ is the identity, then the eigenvalues of the symmetric matrix $\langle \operatorname{II}(\partial_i, \partial_j),n \rangle $ are called the \emph{principal curvatures} of $M$ at $x$ in the direction $n$.
    The inverses of the non-zero principal curvatures are called \emph{principal radii of curvature}.
\end{definition}

\begin{definition}
    For $p \in M$ let
    \begin{align*}
        \rho(p)
        :=
        \left(        
        \max_{\substack{\gamma \text{ unit speed} \\ \text{geodesic with } \gamma(0)=p}}
        |\gamma''(0)|_{\R^N}
        \right)^{-1} = \left(
\max_{\substack{v \in T_p M, |{v}|_{\R^N}=1}}
        |{\operatorname{II}(v,v)}|_{\R^N}
\right)^{-1}.
    \end{align*}
The equality follows from the fact that the acceleration of a geodesic in $M$ is always normal to $M$ (see \cite[Corollary 4.9]{ONeill1983}). 
The quantity 
\begin{align*}
    \rho:= \inf_{p \in M} \rho(p)
\end{align*}
is called the \emph{minimal radius of curvature}.
\end{definition}

In other words, $\rho(p)$ is the smallest principal radius of curvature on $p$ among all normal directions.

The critical points of the endpoint map are called \emph{focal points}. They represent nearby points in $M$ whose normal lines intersect.

\begin{proposition}\cite{milnor1963morse}
    The focal points along the straight line going through $x\in M \subset \R^N$ with direction $n \in T_xM^\perp$, are precisely the points $x + K_i^{-1} n$, where $K_i$ are the nonzero principal curvatures of $M$ at $x$ in the normal direction $n$.
\end{proposition}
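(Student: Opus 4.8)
The plan is to reduce the statement to a purely local computation of the differential of the endpoint map $E$. Fix $x \in M$ and a unit normal $n \in T_xM^\perp$, and set $m = \dim M$. Choose a local parametrisation $\phi\colon U \subset \R^m \to M$ with $\phi(0)=x$, together with a local orthonormal frame $e_1,\dots,e_{N-m}$ of the normal bundle with $e_{N-m}(x)=n$. This induces a local parametrisation of $NM$ near $(x,sn)$ by $(u,t)\mapsto \bigl(\phi(u),\sum_a t_a e_a(u)\bigr)$, under which the endpoint map $(x,v)\mapsto x+v$ becomes
\[
    F(u,t) = \phi(u) + \sum_{a=1}^{N-m} t_a\, e_a(u) .
\]
The point $x+sn$ is a focal point along the line $s\mapsto x+sn$ exactly when $\d F$ fails to be an isomorphism at the parameter value $(0,t)$ with $t=(0,\dots,0,s)$, so everything comes down to computing that differential.

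Next I would compute the columns of $\d F$ at $(0,t)$. The $t_a$-derivatives are $\partial F/\partial t_a = e_a(0)$, i.e. an orthonormal basis of $T_xM^\perp$. The $u_i$-derivatives are $\partial F/\partial u_i = \partial_i\phi(0) + s\,\partial e_{N-m}/\partial u_i(0)$, and here I would use the Weingarten decomposition: for a normal field $\xi$ and a tangent vector $X$ the ambient derivative splits as $\nabla_X\xi = -S_\xi X + \nabla^\perp_X\xi$, where $S_\xi$ is the shape operator determined by $\langle S_\xi X,Y\rangle = \langle \operatorname{II}(X,Y),\xi\rangle$ and $\nabla^\perp$ is the normal connection. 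To keep the bookkeeping clean I would (i) choose the coordinates $u$ so that $\partial_1\phi(0),\dots,\partial_m\phi(0)$ are orthonormal --- exactly the normalisation used in \cref{definition:second-fundamental-form} to define the principal curvatures --- and (ii) choose the normal frame so that $\nabla^\perp e_a=0$ at $x$, which is possible since $\nabla^\perp$ is a genuine connection. With these choices $\partial e_{N-m}/\partial u_i(0) = -S_n(\partial_i\phi(0))$, hence $\partial F/\partial u_i(0) = (\Id - sS_n)(\partial_i\phi(0))$, a tangent vector.

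Now I would read off $\d F$ in block form with respect to the orthogonal splitting $\R^N = T_xM \oplus T_xM^\perp$ of the target and $\R^m\oplus\R^{N-m}$ of the source. The $u$-block maps into $T_xM$ and, under the identification $\R^m\cong T_xM$ given by the chosen orthonormal basis, equals $\Id - sS_n$ (with zero normal component), while the $t$-block maps isometrically onto $T_xM^\perp$ (with zero tangential component). Thus $\d F$ is block-diagonal, and it fails to be an isomorphism precisely when $\Id - sS_n$ is singular on $T_xM$, i.e. precisely when $1/s$ is an eigenvalue of $S_n$. Since the eigenvalues of $S_n$ are, by definition, the principal curvatures $K_i$ of $M$ at $x$ in the direction $n$, the focal points along this line are exactly the points $x + K_i^{-1}n$ ranging over the nonzero $K_i$ (a vanishing principal curvature contributing no finite focal parameter, consistent with the stated formula), which is the assertion.

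I expect the only genuine subtlety to be the linear-algebra bookkeeping: correctly tracking which components of $\partial F/\partial u_i$ and $\partial F/\partial t_a$ lie in $T_xM$ versus $T_xM^\perp$, and justifying the two simplifying normalisations of coordinates and frame (which do not affect the conclusion, since the singular locus of $\d F$ is independent of such choices). A more invariant alternative would be to identify $\ker \d E$ at $(x,sn)$ with the space of normal Jacobi fields along the geodesic $s\mapsto x+sn$ vanishing at $s=0$, as in \cite{milnor1963morse}; but in the Euclidean ambient setting the direct computation above is shorter and self-contained.
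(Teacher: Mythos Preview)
The paper does not give its own proof of this proposition; it simply quotes the result from \cite{milnor1963morse} and uses it as a black box. Your argument is a correct and self-contained reconstruction of the standard proof found there: trivialise $NM$ locally, compute the differential of the endpoint map, split it via the Weingarten decomposition into tangential and normal blocks, and observe that the only obstruction to invertibility is the singularity of $\Id - sS_n$. The two normalisations you invoke (orthonormal coordinate frame at $x$ and $\nabla^\perp e_a = 0$ at $x$) are legitimate and, as you note, inessential --- without the second one the matrix becomes block-triangular rather than block-diagonal, with the same determinant $\det(\Id - sS_n)$.
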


At the focal points, the endpoint map $E$ fails to be injective; hence, the reach is bounded from above by the minimal radius of curvature.  
In order to give a precise relation between the reach and the minimal radius of curvature, we need to introduce the notion of bottleneck.

\begin{definition}
    The \emph{bottleneck locus} of $M$, denoted $B_2$, consists of the pairs $(x_1,x_2) \in M \times M$ with $x_1\neq x_2$ such that the straight line joining $x_1$ and $x_2$ in $\R^N$ is normal to $M$ at $x_1$ and at $x_2$. 
    Let $\lambda$ be the minimum of the distances between the points forming a pair in the bottleneck locus,
    \begin{align*}
        \lambda = \min_{(x_1,x_2)\in B_2} |x_1 - x_2|_{\R^N}.
    \end{align*}
    We will call $\lambda$ \emph{the length of the
smallest bottleneck} on $M$ and the quantity 
    \begin{align*}
        \eta := \frac{\lambda}{2}
    \end{align*}
      the \emph{radius of the smallest bottleneck} of $M$.
\end{definition}

\begin{theorem}[Theorem 3.4 \cite{Aamari2019}] 
\label{theorem:reach-characterisation}
Let $M$ be a compact smooth manifold embedded in $\R^N$, let $\eta$ be the radius of the smallest bottleneck of $M$ and $\rho$ the minimal radius of curvature, then
\begin{align*}
    \tau = \min \{ \eta, \rho \}.
\end{align*}  
\end{theorem}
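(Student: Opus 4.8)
The plan is to prove the two inequalities $\tau \le \min\{\eta,\rho\}$ and $\tau \ge \min\{\eta,\rho\}$ separately, using throughout the characterisation recorded after \cref{theorem:tubular-nbhd-theorem}: $\tau$ is the largest $r$ for which the endpoint map $E\colon V^r \to U^r_M$ is injective (and this value is attained). For the upper bound I would argue that $E$ is \emph{not} injective slightly above $\eta$ and slightly above $\rho$. For $\tau\le\rho$ nothing new is needed beyond the discussion preceding the theorem: a point $p\in M$ and unit $v\in T_pM$ realising $\rho=\rho(p)=|\operatorname{II}(v,v)|^{-1}$ give, via the focal-point description \cite{milnor1963morse}, a focal point at distance $\rho$ along the associated normal ray, so $E$ fails to be injective on $V^r$ for every $r>\rho$. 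For $\tau\le\eta$, let $(x_1,x_2)\in B_2$ realise $\lambda=|x_1-x_2|_{\R^N}$ and set $\hat n=(x_2-x_1)/\lambda$, a unit vector normal to $M$ at both endpoints; then $(x_1,\tfrac{\lambda}{2}\hat n)$ and $(x_2,-\tfrac{\lambda}{2}\hat n)$ are distinct points of $NM$ of norm $\eta=\lambda/2$, and $E$ sends both to the midpoint $\tfrac12(x_1+x_2)$. Hence $E$ is non-injective on $V^r$ for all $r>\eta$, and $\tau\le\eta$. (This step uses only that the segment is a double normal, not that its midpoint realises $d(\cdot,M)$.)

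For the reverse inequality, suppose for contradiction $\tau<\min\{\eta,\rho\}$; in particular $\tau<\infty$, so the medial axis $\operatorname{Med}(M)=\{w:|\pi_M(w)|\ge 2\}$ is non-empty and $\tau=d(M,\overline{\operatorname{Med}(M)})$. Compactness of $M$ and a routine extraction show this infimum is attained at some $z\in\overline{\operatorname{Med}(M)}$ with $d(z,M)=\tau$. If $z\notin\operatorname{Med}(M)$, so $z$ has a unique nearest point $q\in M$, then $z=\lim z_n$ with $z_n\in\operatorname{Med}(M)$, each having two distinct nearest points $a_n,b_n$; by upper semicontinuity of $\pi_M$ and $\pi_M(z)=\{q\}$ we get $a_n,b_n\to q$, and the corresponding unit normals converge to a common $\nu\in N_qM$ with $z=q+\tau\nu$. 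Thus in every neighbourhood of $(q,\tau\nu)$ in $NM$ the map $E$ takes equal values at two distinct points (the two lifts of $z_n$, $n$ large), so $E$ is not locally injective there, $dE_{(q,\tau\nu)}$ is singular, and $z$ is a focal point. By \cite{milnor1963morse}, $\tau=|K_i|^{-1}$ for a nonzero principal curvature $K_i$ of $M$ at $q$ in direction $\nu$; since $|K_i|\le\max_{|u|=1}|\operatorname{II}(u,u)|_{\R^N}=\rho(q)^{-1}\le\rho^{-1}$, this forces $\tau\ge\rho$, contradicting $\tau<\rho$.

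It remains to treat $z\in\operatorname{Med}(M)$. Choose distinct $q_1,q_2\in\pi_M(z)$; each segment $[z,q_i]$ has length $\tau$ and is normal to $M$ at $q_i$ by first-order optimality. The clean sub-case is that $z$ is the midpoint of $[q_1,q_2]$ for a suitable such pair: then $|q_1-q_2|_{\R^N}=2\tau$, and since $q_i-z$ is parallel to $q_2-q_1$ the whole line through $q_1,q_2$ is normal to $M$ at both endpoints, so $(q_1,q_2)\in B_2$ and $\eta=\lambda/2\le\tau$, a contradiction. The main obstacle is exactly the reduction to this sub-case, i.e.\ ruling out configurations in which no two points of $\pi_M(z)$ are antipodal with respect to $z$. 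Writing $u_i=(q_i-z)/\tau$: a first-order perturbation of $z$ within the stratum of $\overline{\operatorname{Med}(M)}$ on which a chosen pair of distance functions stays tied shows that, when $|\pi_M(z)|=2$, either $u_1=-u_2$ or $z$ is not a local minimiser of $d(\cdot,M)$ on $\overline{\operatorname{Med}(M)}$; the genuinely delicate point is $|\pi_M(z)|\ge 3$ with $0$ in the relative interior of $\conv\{u_i\}$ (e.g.\ an ``equilateral'' triple), where the first-order analysis is inconclusive and one must invoke a second-order perturbation of $z$ — or, more economically, Federer's structure theory for sets of positive reach \cite{Federer1959}, which guarantees that the reach-realising point is of one of the two types above. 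Granting this reduction, the case $z\in\operatorname{Med}(M)$ always produces a bottleneck of length $\le 2\tau$, contradicting $\tau<\eta$; combined with the focal-point case this yields $\tau\ge\min\{\eta,\rho\}$ and hence, with the upper bound, $\tau=\min\{\eta,\rho\}$.
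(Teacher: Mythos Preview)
The paper does not prove this theorem; it is quoted as Theorem~3.4 of \cite{Aamari2019} and used throughout as a black box. There is therefore nothing in the paper to compare your attempt against.

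On the substance: the upper bound $\tau\le\min\{\eta,\rho\}$ is essentially fine, with the small caveat that a focal point is a \emph{critical} point of $E$, not automatically a failure of injectivity; one extra line (e.g.\ that beyond a focal point the normal segment ceases to realise $d(\cdot,M)$, so the same ambient point lies on a strictly shorter normal from another foot) would close this. The real issue is in the lower bound, and you name it yourself: when the reach-realising point $z\in\operatorname{Med}(M)$ has $|\pi_M(z)|\ge 3$ and $0$ lies in the relative interior of $\conv\{u_i\}$, your first-order perturbation is inconclusive and you write ``granting this reduction'' in place of an argument. The gesture towards ``Federer's structure theory for sets of positive reach'' does not fill the gap --- the results of \cite{Federer1959} concern regularity of $\pi_M$ and of the boundary of $U^r_M$, and do not by themselves force a distance-minimising medial-axis point to have an antipodal pair among its feet. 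This is a genuine hole: either supply the promised second-order variational argument, or consult the actual proof in \cite{Aamari2019}, which organises the dichotomy differently and does not get stuck on this case.
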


\section{A lower bound for \texorpdfstring{$|\nabla f|_1$}{|\nabla f|_1}  
 on \texorpdfstring{$Z(f)$}{Z(f)}}
\label{section:lower-bound-for-df}

Assume that $M=Z(f) \subset \R^N$ is a closed submanifold defined as the zero set of a single smooth function.
This section's results generalise to zero sets of more than one function, as explained in \cref{section:generalise-to-f1-...-fk}.
If $\nabla f(x)=0$ for a point $x \in M$, then $x$ is a singular point.
Though not rigorously defined, one may informally think of $M$ as having infinite curvature at $x$, and therefore by \cref{theorem:reach-characterisation} as having reach $\tau=0$.
Thus, if we are interested in a lower bound for $\tau$ on a smooth manifold, having a lower bound for 
\begin{align*}
    |\nabla f(x)|_1
        :=
        \sum_{i=1}^N
        |\partial_i f (x)|
\end{align*}
on $Z(f)$ will be helpful.
Given a smooth variety $M$, in this section, we present an algorithm which computes such a lower bound for $|\nabla f|_1$.

The algorithm's idea is to decompose a bounded set containing $M$ into many small boxes.
For a single small box $b$ there is an easy-to-check necessary criterion for it to contain a point of $M$.
Namely, if $f$ is very large in the midpoint of $b$ compared to an upper bound of $|\nabla f|_{\R^N}$, then $f$ cannot vanish anywhere on $b$, and $M \cap b=\emptyset$.
Thus, if we are only interested in a lower bound on $M$, then we can disregard $b$.

On the other hand, we may be presented with a small box $\tilde{b}$ that does not meet this necessary criterion.
We need not know for certain whether $\tilde{b}$ contains a point in $M$ or not, so we compute a lower bound for $|\nabla f|_1$ on all of $\tilde{b}$.
We obtain this lower bound in a similar way:
if $|\nabla f|_1$ is large in the midpoint of $\tilde{b}$ compared to an upper bound of $|\Hess f|_2$, that gives a lower bound for $|\nabla f|_1$ on all of $\tilde{b}$.

This idea is a standard technique in numerically verified proofs; see, for example, \cite[Section B]{Buckmaster2022} and \cite[Section 4.3.2]{GomezSerrano2014}.

This idea is made precise in \cref{algorithm:lower-bound-for-df}, and \cref{figure:algorithm-step-by-step} shows the algorithm being applied, step-by-step, to the function $f(x,y)=x^2+y^2-1$.

\begin{algorithm}
  \caption{Finding a lower bound for $\nabla f$ on $Z(f)$. For an $n$-dimensional box CurrentBox, the function 
  $\text{subdivide}(\text{CurrentBox})$ returns $2^n$ boxes of half side length whose union is CurrentBox.
  We write $|A|_2=\max_{x \neq 0} \frac{|Ax|_{\R^N}}{|x|_{\R^N}}$ for the $2$-norm of the matrix $A$.
    \label{algorithm:lower-bound-for-df}}
  \begin{algorithmic}[1]
    \Require{$M_1,M_2,M_3$ such that $Z(f) \subset [-M_1,M_1]^N$ and $|\nabla f|_{\R^N} \leq M_2$ and $|\Hess f|_2 \leq M_3$ on $[-M_1,M_1]^N$}
    \Statex
    \Let{NewBoxes}{$\{[-M_1,M_1]^N\}$}
    \Let{CaseOneBoxes}{$\emptyset$}
    \Let{CaseTwoBoxes}{$\emptyset$}

    \While{NewBoxes $\neq \emptyset$}
    \Let{CurrentBox}{pop(NewBoxes)}
    \Let{$m$}{midpoint(CurrentBox)}
    \Let{$\varepsilon$}{sidelength(CurrentBox)}
    \If{$|f(m)|> \sqrt{N} \epsilon M_2$}
        \Let{CaseOneBoxes}{CaseOneBoxes $\cup \{\text{CurrentBox} \}$ }
    \ElsIf{$|\nabla f(m)|_1> N^{3/2} \epsilon M_3 $}
        \Let{CaseTwoBoxes}{CaseTwoBoxes $\cup \{\text{CurrentBox} \}$ }
    \Else
        \Let{NewBoxes}{NewBoxes $\cup \text{subdivide}(\text{CurrentBox})$}
        \EndIf
    \EndWhile
  \end{algorithmic}
\end{algorithm}

\begin{figure}[!htbp]
\centering

\newcommand{\imgnum}[1]{\includegraphics[width=0.3\linewidth]{img/algo_step_0#1.png.pdf.ps}}

\begin{tabular}{ccc}
\imgnum{001} & \imgnum{021} & \imgnum{041} \\
\imgnum{061} & \imgnum{081} & \imgnum{101} \\
\imgnum{121} & \imgnum{141} & \imgnum{161} \\
\imgnum{181} & \imgnum{201} & \imgnum{213} \\
\end{tabular}

\caption{\Cref{algorithm:lower-bound-for-df} being executed for the function $f(x,y)=x^2+y^2-1$ on $[-M_1,M_1]$ for $M_1=2$. Boxes in NewBoxes are drawn with a gray border, boxes in CaseOneBoxes are drawn with a green border, and boxes in CaseTwoBoxes are drawn with a red border. The algorithm takes $213$ steps to terminate, and we show progress at 12 steps throughout the process.}
\label{figure:algorithm-step-by-step}
\end{figure}

\begin{proposition}
    \label{proposition:df-algorithm-correctness}
    If $Z(f)$ is non-singular, then \cref{algorithm:lower-bound-for-df} terminates after finitely many steps, and yields a set $B \subset \R^N$ with $Z(f) \subset B$ such that
    \begin{align}
        \label{equation:df-lower-bound}
        \left| \nabla f|_{B} \right|_1 > M_3 N^{3/2} \frac{\epsilon}{2}
    \end{align}
    as well as
    \begin{align}
        \label{equation:f-lower-bound}
        \left|
        f|_{[-M_1,M_1]^N \setminus B}
        \right|
        >
        \sqrt{N} \frac{\epsilon}{2} M_2,
    \end{align}
    where $\epsilon$ is the side length of the smallest box in CaseOneBoxes$\,\cup\,$CaseTwoBoxes.
    Here $(v_1,\dots,v_N)|_1:=|v_1|+\dots+|v_N|$ denotes the $1$-norm of the vector $(v_1,\dots,v_N)$.
\end{proposition}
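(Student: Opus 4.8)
The plan is to separate the two claims, which really concern two different collections of boxes, and then to verify termination by a compactness argument. Write $B := \bigcup(\text{CaseTwoBoxes})$, so the output set is the union of the boxes on which the algorithm certified a gradient lower bound. (One should check this is the intended $B$; the statement $Z(f)\subset B$ forces it, since CaseOneBoxes are exactly the boxes provably disjoint from $Z(f)$.) First I would establish the elementary Taylor-type inequalities that justify the two tests in the loop. For a box $b$ of side length $\varepsilon$ with midpoint $m$, any point $x\in b$ satisfies $|x-m|_{\R^N}\le \frac{\sqrt N}{2}\varepsilon$. Hence by the mean value inequality $|f(x)-f(m)|\le M_2\,|x-m|_{\R^N}\le M_2\frac{\sqrt N}{2}\varepsilon$, and similarly, applying the mean value inequality componentwise to $\nabla f$ and using $|\Hess f|_2\le M_3$, one gets $|\nabla f(x)-\nabla f(m)|_{\R^N}\le M_3\frac{\sqrt N}{2}\varepsilon$, and then $|\nabla f(x)-\nabla f(m)|_1\le \sqrt N\,|\nabla f(x)-\nabla f(m)|_{\R^N}\le M_3\frac{N}{2}\varepsilon \le M_3 N^{3/2}\frac{\varepsilon}{2}$ (using $N\le N^{3/2}$). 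I will double-check the exact constant bookkeeping here, since the factors $\sqrt N$ versus $N^{3/2}$ in the two tests come precisely from converting between $|\cdot|_1$ and $|\cdot|_{\R^N}$ on a vector in $\R^N$; this is the one place where a routine but careful computation is needed.

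Granting these inequalities, the two bounds in the proposition follow almost immediately. If a box $b$ (side length $\varepsilon_b\ge\epsilon$) lands in CaseOneBoxes, then $|f(m_b)|> \sqrt N\,\varepsilon_b M_2 \ge \sqrt N\,\epsilon M_2$, so for every $x\in b$, $|f(x)|\ge |f(m_b)| - M_2\frac{\sqrt N}{2}\varepsilon_b > \sqrt N\,\varepsilon_b M_2 - \frac{\sqrt N}{2}\varepsilon_b M_2 = \frac{\sqrt N}{2}\varepsilon_b M_2 \ge \sqrt N\frac{\epsilon}{2}M_2$; since $[-M_1,M_1]^N\setminus B$ is covered by the CaseOneBoxes (the box decomposition of the ambient cube is into CaseOneBoxes $\cup$ CaseTwoBoxes, and $B$ is the union of the latter), this gives \eqref{equation:f-lower-bound}. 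In particular $f$ does not vanish on $[-M_1,M_1]^N\setminus B$, so $Z(f)\subset B$. Symmetrically, if $b\in$ CaseTwoBoxes then $|\nabla f(m_b)|_1 > N^{3/2}\varepsilon_b M_3$, hence for $x\in b$, $|\nabla f(x)|_1 \ge |\nabla f(m_b)|_1 - M_3 N^{3/2}\frac{\varepsilon_b}{2} > N^{3/2}\varepsilon_b M_3 - N^{3/2}\frac{\varepsilon_b}{2}M_3 = N^{3/2}\frac{\varepsilon_b}{2}M_3 \ge M_3 N^{3/2}\frac{\epsilon}{2}$, which is \eqref{equation:df-lower-bound}.

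The remaining, and genuinely substantive, point is termination: I must rule out an infinite descending chain of subdivisions. Suppose the algorithm runs forever. Since each box that fails both tests is split into $2^N$ children of half the side length, an infinite run produces (by König's lemma on the subdivision tree) a nested sequence of boxes $b_1\supset b_2\supset\cdots$ with $\mathrm{side}(b_k)=2^{-k}\cdot 2M_1\to 0$, each of which failed both tests, i.e. $|f(m_k)|\le \sqrt N\,\varepsilon_k M_2$ and $|\nabla f(m_k)|_1\le N^{3/2}\varepsilon_k M_3$ for the midpoints $m_k$. The $m_k$ converge to a common point $x_\infty\in[-M_1,M_1]^N$; passing to the limit and using continuity of $f$ and $\nabla f$ gives $f(x_\infty)=0$ and $\nabla f(x_\infty)=0$. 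Thus $x_\infty\in Z(f)$ is a singular point, contradicting the non-singularity hypothesis. Hence the algorithm terminates after finitely many steps, and the smallest box in CaseOneBoxes $\cup$ CaseTwoBoxes — over which $\epsilon$ is defined — exists. I expect this compactness/König's-lemma step to be the main obstacle in the sense that it is the only non-local part of the argument; the two quantitative bounds are then just the per-box estimates assembled above. One should also note termination guarantees CaseOneBoxes $\cup$ CaseTwoBoxes is nonempty (it covers the ambient cube, which is nonempty), so $\epsilon>0$ is well defined, closing the argument.
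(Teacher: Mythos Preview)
Your proof is correct and follows essentially the same approach as the paper: a nested-box compactness argument for termination, followed by the mean value inequality on each box to obtain the two quantitative bounds, with the minimum side length $\epsilon$ entering via monotonicity. Your identification of $B$ as the union of CaseTwoBoxes is correct (the paper's proof contains a typo saying CaseOneBoxes), and your gradient-variation estimate $|\nabla f(x)-\nabla f(m)|_1\le M_3\frac{N}{2}\varepsilon_b$ is in fact slightly sharper than the paper's row-by-row bound $M_3 N^{3/2}\frac{\varepsilon_b}{2}$, which you then harmlessly relax to match the algorithm's threshold.
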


\begin{proof}
    We first show that \cref{algorithm:lower-bound-for-df} terminates after finitely many steps.
    To this end, assume that the algorithm does not terminate.
    That is, there exists an infinite sequence of boxes $b_0=[-M_1,M_1]^N \supset b_1 \supset b_2 \supset \dots$ with the property that $b_{i+1}$ has half the side length of $b_i$ for all $i \in \{0, 1, 2, \dots\}$, and each $b_i$ is in the "if" or "else if" case from \cref{algorithm:lower-bound-for-df}, but never the "else" case.
    For each $i$, let $p_i \in b_i$.
    By assumption on the boxes, we have that the limit $\lim_{i \rightarrow \infty} p_i=p^*$ exists.
    Because $b_i$ is not in the "if" case, we have that
    \begin{align*}
            |f(p^*)|
        =\lim_{i \rightarrow \infty} |f(p_i)| \leq \lim_{i \rightarrow \infty} \frac{2M_1}{2^i} M_2
        =0.
    \end{align*}

    Because $b_i$ is not in the "else if" case, we have that
    \begin{align*}
        |\nabla f (p^*)|_1
        =
        \lim_{i \rightarrow \infty} |\nabla f(p_i)|_1
        \leq
        \lim_{i \rightarrow \infty} \frac{1}{2} \cdot \frac{2M_1}{2^i} M_3
        =0.
    \end{align*}
    This means that $p^*$ is a singular point of $Z(f)$, which contradicts the assumption that $Z(f)$ is non-singular.

    Define $B:= \bigcup_{b \in \text{CaseOneBoxes}} b$.
    It remains to show the bounds of \cref{equation:df-lower-bound,equation:f-lower-bound}.
    When \cref{algorithm:lower-bound-for-df} terminates, then NewBoxes$\,=\emptyset$.
    
    Let $b \in $ CaseOneBoxes be a box with side length $\epsilon$ and midpoint $m \in b$ satisfying the "if" case.
    For $x \in b$ let $\gamma:[0,L] \rightarrow \R^N$ be the unit speed straight line from $m$ to $x$.
    By the mean value theorem, there exists $\theta \in (0,L)$ such that
    \begin{align}
    \label{equation:algo-f-nonzero-argument}
    \begin{split}
        |f(x)|
        &=
        \left| f(m)- \< \nabla f (\gamma(\theta)), \gamma'(\theta) \> \cdot |m-x|_{\R^N}\right|
        \\
        &
        \geq
        |f(m)|- \sqrt{N} \frac{\epsilon}{2} M_2
        \\
        &
        >
        \sqrt{N} \frac{\epsilon}{2} M_2
        \\
        &
        >0,
    \end{split}
    \end{align}
    where we used that the maximum distance of $m$ and $x$ in the $N$-dimensional cube of side length $\epsilon$ is $\sqrt{N} \frac{\epsilon}{2}$, and we used the assumption from the "if" case in the second step.
    Thus, $Z(f) \cap b = \emptyset$.

    Now, assume $b$ satisfies the "only if" case.
    Similar to the previous case, by the mean value theorem, there exists for all $i \in \{1,\dots,n\}$ and $x \in b$ some $\theta \in (0,L)$ such that,
    \begin{align*}
        |\partial_i f(x)|
        =
        \left| \partial_i f(m)- \< \nabla \partial_i f (\gamma(\theta)), \gamma'(\theta)\> \cdot |m-x|_{\R^N}\right|,
    \end{align*}
    and therefore
    \begin{align}
    \label{equation:which-Hess-norm-used}
    \begin{split}
        |\nabla f(x)|_1
        &=
        \sum_{i=1}^N
        |\partial_i f (x)|
        \\
        &\geq
        |\nabla f(m)|_1
        -
        \sum_{i=1}^N
        |\nabla \partial_i f(\gamma(\theta))|_{\R^N}
        |m-x|_{\R^N}
        \\
        &=
        |\nabla f(m)|_1
        -
        \sum_{i=1}^N
        |\Hess_i f(\gamma(\theta))|_{\R^N}
        |m-x|_{\R^N}
        \\
        &\geq
        |\nabla f(m)|_1
        -
        M_3 N^{3/2} \frac{\epsilon}{2}
        \\
        &>
        M_3 N^{3/2} \frac{\epsilon}{2},
        \end{split}
    \end{align}
 where $\Hess_i f(\gamma(\theta))$ is the $i$-th row of $\Hess f(\gamma(\theta)$, and we used the assumption from the "else if" condition in the last step.
    This gives a lower bound for $|\nabla f|_1$ on $b$, and therefore in particular on all of $Z(f)$, because $Z(f)$ is contained in the union of all boxes satisfying the "else if" condition.
\end{proof}

\begin{remark}
    \label{remark:possible-algo-improvements}
    We presented the most basic version of \cref{algorithm:lower-bound-for-df}, but several improvements are possible to improve its efficiency.
    \begin{enumerate}
        \item 
        In \cref{equation:which-Hess-norm-used} we obtain a better constant if using the $(2,1)$-norm (see \cite[Section 2.3.1]{Golub2013} for a definition) of $\Hess f$ rather than the $2$-norm.
        However, this norm may be difficult to compute explicitly.

        \item 
        We subdivide boxes into $2^N$ smaller boxes.
        A standard improvement, for example, implemented in \cite{GomezSerrano2019}, is to only subdivide along one direction, thereby producing two rectangles.
        Roughly speaking:
        if one wants to ensure that $\nabla f$ is not vanishing on a box, this becomes easier when the length of the box in the direction of $\nabla f$ is small.
        Thus, it can speed up the algorithm to subdivide in this direction.

        \item 
        Instead of using bounds $|\nabla f|_{\R^N} \leq M_2$ and $|\Hess f|_2 \leq M_3$ that hold \emph{everywhere}, a box is more likely to fall into case one or case two from the algorithm if one uses bounds for $\nabla f$ and $\Hess f$ on the box that is currently being checked.
        One also gets a sharper estimate for $|\nabla f|_1$ if instead of using the global bounds in \cref{equation:df-lower-bound} one evaluates the right hand side of \cref{equation:df-lower-bound} on every box in CaseTwoBoxes individually, with the local bounds for $\nabla f$ and $\Hess f$.
    \end{enumerate}
\end{remark}

\begin{example}
    \label{example:circle}
    For $f(x,y)=x^2+y^2-1$, running \cref{algorithm:lower-bound-for-df} on $M:= Z(f) \subset [-2,2]^2$ requires $213$ steps and yields $|\nabla f|_1 \geq 0.3535$ on $M$.
    Some intermediate steps of running the algorithm are shown in \cref{figure:algorithm-step-by-step}.
    The obtained bound is correct but not sharp; the global minimum is:
   \begin{align*}
    \min_{x \in M} | \nabla f|_1(x)
    =
    |\nabla f|_1
    \left(
    1,0
    \right)
    =
    2.
   \end{align*}
    One can obtain sharper bounds by choosing a smaller square than $[-2,2]^2$ containing $M$, which leads to better bounds for $\grad f$ and $\Hess f$.
\end{example}

\begin{example}
    \label{example:interesting-curve}
    We also apply \cref{algorithm:lower-bound-for-df} to the curve
    \begin{align*}
    M=Z(f) \subset \R^2 \quad \text{ where } \quad
    f(x,y)
    =
    (x^3 - x y ^ 2 + y + 1) ^ 2  (x ^ 2 + y^ 2 - 1) + y^ 2 - 5
    \end{align*}
    from \cite{Breiding2025} (see also \cite[Figure 10]{DiRocco2022}).
    As written, \cref{algorithm:lower-bound-for-df} requires a long time to terminate, and we therefore implemented the third improvement from \cref{remark:possible-algo-improvements}.
    The result after $5729$ steps is $|\nabla f|_1 \geq 0.6496$.
    Randomly sampling $10000$ points in $M$ and computing $| \nabla f|_1$ in these points gives a minimum of $1.88689$, which is consistent with this estimate.
\end{example}

\section{A lower bound for the reach}
\label{section:lower-bound-for-reach}

As before, we denote by $M$ the smooth manifold given by the zero set $Z(f) \subset \R^N$ of a smooth function $f:\R^N \rightarrow \R$.
In \cref{section:lower-bound-for-df}, we provided an algorithm to compute a lower bound for $|\nabla f|_1$ on $M$.
We now put this lower bound to work to prove a lower bound for the reach of $M$.
By virtue of \cref{theorem:reach-characterisation}, this requires two separate estimates:
first, an upper bound for the curvature of $M$, which is achieved in \cref{subsection-upper-bound-for-curvature-from-df} and second, a lower bound for the smallest bottleneck of $M$, which is given in \cref{subsection:lower-bound-for-bottleneck}.
For convenience, we explicitly state the bound for the reach obtained in this way in corollary \ref{corollary:lower-bound-for-reach}.

\subsection{An upper bound for the curvature of $M$}
\label{subsection-upper-bound-for-curvature-from-df}

We recall the heuristic picture that at a singular point in $M$, i.e. $\nabla f=0$, we have that the curvature of $M$ "is infinity".
In more rigorous terms, if $\nabla f$ is close to zero, then the curvature of $M$ is very large.
The next proposition is a standard exercise in Riemannian geometry, and exactly quantifies this heuristic:

\begin{proposition}[Exercise 4.3 in \cite{ONeill1983}]
    \label{proposition:oneill-II-exercise}
    Let $f:\R^N \rightarrow \R$ be a smooth function having $0$ as a regular value. If $M=Z(f) \subset \R^N$, then, for unit length tangent vectors $v,w \in T_x M$ we have
    \begin{align}
        |\operatorname{II}(v,w)|_{\R^N}
        =
        \frac{| \Hess(f)(v,w) |}{| \grad f(x)|_{\R^N}}.
    \end{align}
\end{proposition}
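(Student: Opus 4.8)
The plan is to exploit the fact that $M=Z(f)$ is a regular level set, so that $f$ is constant on $M$, and to differentiate this relation twice in tangential directions. Fix $x \in M$ and unit tangent vectors $v,w \in T_xM$. Extend $w$ to a smooth vector field $W$ defined near $x$ on $M$ and everywhere tangent to $M$, so that $\operatorname{II}(v,w)=\operatorname{nor}\nabla_v W$ by definition, where $\nabla$ is the flat connection on $\R^N$. Since $f\equiv 0$ on $M$ and $W$ is tangent to $M$, the function $Wf=\langle\grad f,W\rangle$ vanishes identically along $M$.

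Next I would differentiate the identity $\langle\grad f,W\rangle\equiv 0$ along a curve $c$ in $M$ with $c(0)=x$, $c'(0)=v$. Since the left-hand side is identically zero, the derivative is zero, and the Leibniz rule for the Euclidean connection gives
\[
0=\langle\nabla_v\grad f,w\rangle+\langle\grad f,\nabla_v W\rangle .
\]
For the first term, on flat $\R^N$ the covariant derivative of $\grad f$ is just the matrix of second partials acting on $v$, so $\langle\nabla_v\grad f,w\rangle=\sum_{i,j}v_i w_j\,\partial_i\partial_j f=\Hess(f)(v,w)$; this is the point where I would check that the covariant Hessian coincides with the ordinary Hessian appearing in the statement. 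For the second term, split $\nabla_v W$ into tangential and normal parts; the tangential part lies in $T_xM$ and is therefore orthogonal to $\grad f(x)\in T_xM^\perp$, so $\langle\grad f,\nabla_v W\rangle=\langle\grad f,\operatorname{nor}\nabla_v W\rangle=\langle\grad f,\operatorname{II}(v,w)\rangle$. Combining, $\langle\grad f(x),\operatorname{II}(v,w)\rangle=-\Hess(f)(v,w)$.

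Finally I would use that $0$ is a regular value, so $\grad f(x)\neq 0$ and $T_xM^\perp=\R\,\grad f(x)$ is one-dimensional. Writing $\operatorname{II}(v,w)=\lambda\,\grad f(x)$ and pairing with $\grad f(x)$ yields $\lambda\,|\grad f(x)|_{\R^N}^2=-\Hess(f)(v,w)$, hence
\[
|\operatorname{II}(v,w)|_{\R^N}=|\lambda|\,|\grad f(x)|_{\R^N}=\frac{|\Hess(f)(v,w)|}{|\grad f(x)|_{\R^N}},
\]
which is the claimed formula.

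I do not expect a genuine obstacle here; the computation is short and standard (it is O'Neill's Exercise 4.3). The only points requiring a little care are the identification of the covariant Hessian on $\R^N$ with the matrix of second partials, and the observation that $v(Wf)=0$ needs no extension of $W$ off $M$ because $Wf$ already vanishes identically on $M$ and $v$ is tangent to $M$; the symmetry of $\operatorname{II}$ also makes it irrelevant which of $v,w$ one chooses to extend. An alternative is to run the same computation through the shape operator $S_n(v)=-\operatorname{nor}\nabla_v n$ with $n=\grad f/|\grad f|_{\R^N}$, but the level-set differentiation above is the most direct route.
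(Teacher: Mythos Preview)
Your argument is correct and is the standard way to do this exercise. Note, however, that the paper does not actually supply a proof: the proposition is stated with attribution to O'Neill's textbook (Exercise~4.3) and used as a black box, so there is no ``paper's own proof'' to compare against. Your write-up therefore fills in exactly what the paper omits, and the route you take---differentiating the identity $\langle\grad f,W\rangle\equiv 0$ along $M$ and using that the normal space is the line spanned by $\grad f$---is the intended one.
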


\subsection{A lower bound for the smallest bottleneck of $M$}
\label{subsection:lower-bound-for-bottleneck}

The reach is controlled by two quantities:
the second fundamental form and the smallest bottleneck.
We obtained an estimate for the second fundamental form in the last \cref{subsection-upper-bound-for-curvature-from-df}.
Thus, it remains to prove an estimate for the smallest bottleneck, which is precisely the following:

\begin{proposition}
    \label{proposition:bottle-neck-bound-one-function}
    Let $M=Z(f) \subset U$ for some convex set $U$ in $\R^N$.
    Assume $|\nabla f|_{\R^N} \geq \epsilon$ on $Z(f)$ and assume $|\Hess f|_{2} \leq k$ on $U$.
    Here, $|A|_2=\max_{x \neq 0} \frac{|Ax|_{\R^N}}{|x|_{\R^N}}$ for a matrix $A$ as before.
    Denote by $\lambda$ the length of the smallest bottleneck of $M$.
    Then:
    \begin{align}
        \lambda
        \geq
        \frac{\epsilon}{k}.
    \end{align}
\end{proposition}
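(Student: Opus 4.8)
The plan is to argue by contradiction: suppose there is a bottleneck pair $(x_1,x_2) \in B_2$ with $|x_1 - x_2|_{\R^N} = \lambda < \epsilon/k$, and derive a contradiction with the hypothesis $|\nabla f|_{\R^N} \geq \epsilon$ on $Z(f)$. The key geometric input is that the segment joining $x_1$ and $x_2$ is normal to $M$ at both endpoints, so the unit vector $u := (x_2 - x_1)/\lambda$ is parallel to $\nabla f(x_1)$ and to $\nabla f(x_2)$ (both gradients are nonzero since $0$ is a regular value, so they span the respective normal lines). Write $\nabla f(x_1) = \alpha_1 u$ and $\nabla f(x_2) = \alpha_2 u$ with $|\alpha_i| = |\nabla f(x_i)|_{\R^N} \geq \epsilon$.

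The heart of the argument is to compare $\nabla f(x_2)$ with $\nabla f(x_1)$ along the segment. Since $U$ is convex, the segment $\gamma(t) = x_1 + t(x_2 - x_1)$, $t \in [0,1]$, stays in $U$, and
\begin{align*}
    \nabla f(x_2) - \nabla f(x_1)
    =
    \int_0^1 \Hess f(\gamma(t)) \, (x_2 - x_1) \, dt,
\end{align*}
so that $|\nabla f(x_2) - \nabla f(x_1)|_{\R^N} \leq k \lambda$ using $|\Hess f|_2 \leq k$ on $U$. Now project this onto the direction $u$: since both $\nabla f(x_1)$ and $\nabla f(x_2)$ are multiples of $u$, we get $|\alpha_2 - \alpha_1| \leq k\lambda$. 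The remaining point is a sign/orientation consideration: I expect one can choose the labelling so that $\alpha_1$ and $\alpha_2$ have opposite signs — heuristically, $\nabla f$ points "outward" from $Z(f)$ on one side, and the segment from $x_1$ to $x_2$ enters the region $\{f > 0\}$ near $x_1$ but exits it near $x_2$ (or one runs the comparison with $-u$ at $x_2$). Then $|\alpha_2 - \alpha_1| = |\alpha_1| + |\alpha_2| \geq 2\epsilon$, giving $2\epsilon \leq k\lambda$, hence $\lambda \geq 2\epsilon/k$, which is even stronger than claimed; if instead the signs agree, one still needs a separate argument.

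The main obstacle is precisely pinning down this sign issue rigorously. If one cannot guarantee opposite signs, the clean estimate $|\alpha_1| + |\alpha_2| \leq k\lambda$ fails, and one only gets $\big||\alpha_1| - |\alpha_2|\big| \leq k\lambda$, which does not immediately contradict $|\alpha_i| \geq \epsilon$. One way around this is to avoid comparing gradients directly and instead use the second fundamental form: consider the function $t \mapsto f(\gamma(t))$; its derivative at $t=0$ is $\langle \nabla f(x_1), x_2 - x_1\rangle = \alpha_1 \lambda$ and at $t=1$ is $\alpha_2 \lambda$, while $f(\gamma(0)) = f(\gamma(1)) = 0$, so by Rolle's theorem there is an interior critical point, and then a Taylor/Hessian bound on the second derivative forces $|\alpha_1|\lambda$ to be small, namely $|\alpha_1| \leq k\lambda$ (and likewise $|\alpha_2| \leq k\lambda$), directly contradicting $|\alpha_i| \geq \epsilon$ once $\lambda < \epsilon/k$. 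This route sidesteps the orientation question entirely, so I would pursue it: set $g(t) = f(\gamma(t))$, note $g(0) = g(1) = 0$, $g'(0) = \lambda\langle\nabla f(x_1), u\rangle = \lambda\alpha_1$, and $|g''(t)| = |\langle \Hess f(\gamma(t))(x_2-x_1), x_2-x_1\rangle| \leq k\lambda^2$; integrating $g'$ from the Rolle point $t_0$ where $g'(t_0)=0$ back to $0$ gives $|g'(0)| \leq k\lambda^2$, i.e. $|\alpha_1| \leq k\lambda < \epsilon$, a contradiction. The only care needed is that $\gamma$ lies in $U$, which convexity provides, and that the normality hypothesis is genuinely used to identify $g'(0)$ with $\pm\lambda|\nabla f(x_1)|_{\R^N}$.
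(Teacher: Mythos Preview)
Your proposal is correct, and the second route you settle on---apply Rolle's theorem to $t\mapsto f(\gamma(t))$ to find an interior critical point, then use the Hessian bound on the second derivative to force $|\nabla f(x_1)|_{\R^N}\leq k\lambda$---is exactly the paper's argument, only reparametrised on $[0,1]$ instead of unit speed on $[0,\lambda]$. Your initial sign-based attempt is indeed problematic for the reason you identify, and the paper does not use it either.
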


\begin{proof}
    Let $p,q \in M$ be two points connected by a smallest bottleneck, i.e. $|p-q|_{\R^N}=\lambda$.
    Denote by $\gamma : [0,\lambda] \rightarrow U$ the unit speed straight line from $p$ to $q$.
    We have that $f \circ \gamma:[0,\lambda] \rightarrow \R$ vanishes at the endpoints, so by Rolle's theorem, there exists $\theta \in (0, \lambda)$ such that 
    \begin{align}
        \label{equation:theta-property}
        0=\frac{d}{dt}|_{t=\theta} f(\gamma(t))=\< \nabla f (\gamma(\theta)),\gamma'(\theta) \>.
    \end{align}
    By the mean value theorem, there exists $\xi \in (0,\theta)$ such that
    \begin{align}
        \label{equation:xi-mean-value-property}
        \left|
        \frac{d}{dt}|_{t=\theta}
        f(\gamma(t))
        -
        \frac{d}{dt}|_{t=0}
        f(\gamma(t))
        \right|
        =
        \left|
        \theta
        \cdot
        \frac{d^2}{dt^2}|_{t=\xi}
        f(\gamma(t))
        \right|.
    \end{align}

    Thus
    \begin{align*}
    \begin{split}
        \epsilon 
        &\leq
        | (\nabla f) (\gamma(0))|_{\R^N}
        =
        | (\nabla f) (\gamma(0))|_{\R^N} \cdot |\gamma'(0)|_{\R^N} =
        |\< (\nabla f) (\gamma(0)), \gamma'(0) \>|
        \\
        &=
        \left|
        \frac{d}{dt}|_{t=0}
        f(\gamma(t))
        \right|
        =
        \left|
        \frac{d}{dt}|_{t=\theta}
        f(\gamma(t))
        -
        \frac{d}{dt}|_{t=0}
        f(\gamma(t))
        \right|
        \\
        &=
        \left|
        \theta
        \cdot
        \frac{d^2}{dt^2}|_{t=\xi}
        f(\gamma(t))
        \right|
        =
        \theta
        \left|
        \frac{d}{dt}|_{t=\xi}
        \< \nabla f, \gamma'(t) \>
        \right|
        \\
        &=
        \theta
        \left|
        \<\nabla_{\gamma'(\xi)} \nabla f, \gamma'(\xi) \>
        +
        \<\nabla f, \nabla_{\gamma'(t)} \gamma'(t) \>
        \right|
        \\
        &\leq
        \theta
        | \nabla_{\gamma'(\xi)} \nabla f|_{\R^N} 
        \cdot 
        | \gamma'(\xi)|_{\R^N} 
        \leq
        \theta
        |\Hess f(\gamma(\xi))|_{2} \cdot |\gamma'(\xi)|_{\R^N}
        \\
        &\leq
        \lambda k,
        \end{split}
    \end{align*}
    proving the claim.
    In the second step, we used that $\gamma$ is a unit speed curve;
    In the third step, we used the definition of bottleneck, which implies that $\gamma'(0)$ is perpendicular to $Z(f)$, together with the fact that $\nabla f$ is perpendicular to $Z(f)$;
    in the fifth step we used \cref{equation:theta-property};
    in the sixth step we used \cref{equation:xi-mean-value-property};
    in the eighth step, we used that the connection $\nabla$ is metric;
    in the ninth step, we used the Cauchy-Schwarz inequality for the first summand and $\nabla_{\gamma'(t)}\gamma'(t)=0$, because $\gamma$ is a geodesic with respect to $\nabla$;
    in the last step, we used again that $\gamma$ is a unit speed curve.
\end{proof}

\begin{remark}
    \label{remark:algo-comparison}
    We now comment on how our algorithm \cref{algorithm:lower-bound-for-df} compares to the approach of \cite{DiRocco2022} for computing the smallest bottleneck and \cite{2023DiRocco} for computing a lower bound for the reach:

    it is easy to construct examples that are immediately solved by \cite{DiRocco2022,2023DiRocco}, but can take a very long time for \cref{algorithm:lower-bound-for-df}.
    For example, taking the union of two circles that are close together, they form a very small bottleneck, say
    \[
    f(x,y)
    =
    (x^2+y^2-1)((x-\xi)^2+y^2-1).
    \]
    For $\xi>2$, this has a bottleneck of length $\xi - 2$.
    The algorithms of \cite{DiRocco2022,2023DiRocco} computes this for any choice of $\xi$ immediately, while \cref{algorithm:lower-bound-for-df} needs a long time for small values of $\xi$.

    Conversely (and this was one of the primary motivations for this article), one can construct examples where the algorithms of \cite{DiRocco2022,2023DiRocco} do not terminate, but \cref{algorithm:lower-bound-for-df} still produces a result.
    For example, if $f(x,y)=x^2+y^2-1+\epsilon$, where $\epsilon$ is a small polynomial, then $Z(f)$ remains roughly a circle, and its reach does not change much.
    \Cref{algorithm:lower-bound-for-df} works the same as before.
    However, if $\epsilon$ is a generic polynomial of high degree, then the other algorithms take a long time.

    Apart from these artificial examples:
    In one example from \cite{Douglas2024}, the implementation \cite{Gäfvert_2020} of the algorithm from \cite{DiRocco2022} did not yield a result because no suitable starting system could be found for the homotopy method.
    \Cref{algorithm:lower-bound-for-df} (or more accurately: \cref{algorithm:lower-bound-for-det}) works, but we have not yet implemented parallelization which would be needed to make it finish in a reasonable time.
\end{remark}

\subsection{Deducing the lower bound for the reach}

Putting the results of this section together, we obtain the following corollary as an immediate consequence of \cref{theorem:reach-characterisation,proposition:bottle-neck-bound-one-function,proposition:oneill-II-exercise}:

\begin{corollary}
    \label{corollary:lower-bound-for-reach}
    Let $M=Z(f) \subset U$ for some convex set $U\subset \R^N$.
    If  $|{\Hess f}|_2
        \leq
        C_1
        $
        on $U$ and
        $ |\nabla f|_1 \geq C_2$ on M,
    then
    \begin{align}
        \tau
        \geq
        \frac{C_2}{2 \sqrt{N} C_1}.
    \end{align}
\end{corollary}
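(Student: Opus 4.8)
The plan is to invoke \cref{theorem:reach-characterisation}, which identifies $\tau$ with $\min\{\eta,\rho\}$ — the minimum of the radius of the smallest bottleneck and the minimal radius of curvature — and to bound each of these two quantities from below. I would show $\rho \geq \frac{C_2}{\sqrt{N}\,C_1}$ and $\eta \geq \frac{C_2}{2\sqrt{N}\,C_1}$; since the latter is the smaller, it is the bound for $\tau$.

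First I would carry out a norm conversion, because both \cref{proposition:oneill-II-exercise} and \cref{proposition:bottle-neck-bound-one-function} are phrased in terms of the Euclidean norm $|\nabla f|_{\R^N}=|\nabla f|_2$, whereas the hypothesis controls $|\nabla f|_1$. The elementary inequality $|v|_1\leq\sqrt{N}\,|v|_2$, valid for all $v\in\R^N$, turns the assumption $|\nabla f|_1\geq C_2$ on $M$ into $|\nabla f|_{\R^N}\geq C_2/\sqrt{N}$ on $M$. In particular $\nabla f$ is nowhere zero on $M$, so $0$ is a regular value of $f$ and \cref{proposition:oneill-II-exercise} applies.

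For the curvature bound I would apply \cref{proposition:oneill-II-exercise}: for a unit tangent vector $v\in T_xM$ at a point $x\in M$,
\[
  |\operatorname{II}(v,v)|_{\R^N}
  =\frac{|\Hess f(v,v)|}{|\grad f(x)|_{\R^N}}
  \leq\frac{|\Hess f|_2}{C_2/\sqrt{N}}
  \leq\frac{\sqrt{N}\,C_1}{C_2},
\]
where the numerator is estimated by the operator $2$-norm since $|v|_{\R^N}=1$. Taking the reciprocal of the maximum over unit $v$ gives $\rho(x)\geq C_2/(\sqrt{N}\,C_1)$ for every $x\in M$, hence $\rho\geq C_2/(\sqrt{N}\,C_1)$. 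For the bottleneck bound I would apply \cref{proposition:bottle-neck-bound-one-function} with $\epsilon=C_2/\sqrt{N}$ and $k=C_1$ — its hypotheses $|\nabla f|_{\R^N}\geq\epsilon$ on $Z(f)$ and $|\Hess f|_2\leq k$ on $U$ hold by the above — obtaining $\lambda\geq C_2/(\sqrt{N}\,C_1)$ and therefore $\eta=\lambda/2\geq C_2/(2\sqrt{N}\,C_1)$. Combining, $\tau=\min\{\eta,\rho\}\geq C_2/(2\sqrt{N}\,C_1)$, which is the claim.

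I do not expect a genuine obstacle; the statement is an honest corollary of the three cited results. The only things to keep track of are the $\ell^1$-versus-$\ell^2$ conversion and the bookkeeping of constants: the factor $2$ in the denominator comes solely from the relation $\eta=\lambda/2$, so the curvature term alone would give the sharper $C_2/(\sqrt{N}\,C_1)$ and it is the bottleneck estimate that is responsible for halving it.
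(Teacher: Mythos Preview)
Your proof is correct and follows exactly the same approach as the paper: convert the $\ell^1$ gradient bound to an $\ell^2$ bound via $|\nabla f|_1\leq\sqrt{N}\,|\nabla f|_{\R^N}$, apply \cref{proposition:oneill-II-exercise} for $\rho$ and \cref{proposition:bottle-neck-bound-one-function} for $\eta$, then invoke \cref{theorem:reach-characterisation}. The paper's own proof is a terse three-line version of what you wrote.
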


\begin{proof}
    We have $| \nabla f |_1 \leq \sqrt{N} | \nabla f |_{\R^N}$.
    Using proposition \ref{proposition:oneill-II-exercise}, we have $\rho \geq \frac{C_2}{\sqrt{N} C_1}$ and using \cref{proposition:bottle-neck-bound-one-function} we get $\eta = \frac{1}{2} \lambda \geq \frac{C_2}{2\sqrt{N} C_1}$.
    Plugging this into \cref{theorem:reach-characterisation} gives the claim.
\end{proof}

\begin{example}
    \label{example:circle-reach}
    Returning to the example of the circle from \cref{example:circle}:
    using the bounds
    \[
    |{\Hess f}|_2
    =
    \left|{ \begin{pmatrix} 2&0\\0&2 \end{pmatrix}}\right|_2
    =
    2
    \text{ on }
    [-2,2]^2
    \quad
    \text{ and }
    \quad
    |\nabla f |_1
    \geq
    0.3535
    \text{ on }
    M,
    \]
    we obtain the bound
    \[
    \tau
    \geq
    \frac{0.3535}{2 \cdot \sqrt{2} \cdot 2}
    \approx
    0.0625
    \quad
    \text{(true value: $\tau=1$).}
    \]
\end{example}

\begin{example}
    For the curve from \cref{example:interesting-curve}, we use the bounds
    \[
    |{\Hess f}|_2
    \leq
    22406.484
    \text{ on }
    [-3,3]^2
    \quad
    \text{ and }
    \quad
    | \nabla f |_1
    \geq
    1.55
    \text{ on }
    M
    \]
    and obtain
    \[
    \tau
    \geq
    1.03 \cdot 10^{-5}.
    \]
\end{example}

\section{Application: comparing extrinsic and intrinsic distance}
\label{application:extrinsic-instric-distance-estimate}

We now turn to applications of lower bounds for the reach, and begin with a comparison statement for the extrinsic and intrinsic distance for submanifolds $M \subset \R^N$.
Here it is no longer important that $M$ is the zero set of functions; we only require a lower bound for its reach.

Given a submanifold $M \subset \R^N$ recall the two distance functions on $M$ from \cref{definition:distance-functions}: 
the intrinsic distance $d_M$ and the extrinsic distance $|\cdot|_{\R^N}$ from Euclidean space.
Trivially, $|x-y|_{\R^N} \leq d_M(x,y)$.
The converse inequality 
\begin{align}
    \label{equation:extrinsic-intrinsics-estimate}
    d_M(x,y) \leq c \cdot |x-y|_{\R^N}
\end{align} for some constant $c>0$ is important in different applications.
In differential geometry, one often requires intrinsic diameter bounds, and one could obtain such an estimate from \cref{equation:extrinsic-intrinsics-estimate}.
Such bounds were, for example, studied in \cite[Theorem 1.1]{Topping2008}, \cite[Theorem 2.3]{Huisken1998}, \cite[Theorem 1]{Feng1999}.

Another application is the following:
a basic technique for obtaining an upper bound of a function in Euclidean space is the following \emph{Lipschitz method}.
One evaluates the function at many points, and together with a Lipschitz estimate, this yields a global estimate for the function.
This is the basic idea behind the more sophisticated methods in \cite{Katz2017,Eiras2023,Buckmaster2022}.
To use the same idea on submanifolds of $\R^N$, one can use an estimate like \cref{equation:extrinsic-intrinsics-estimate}.

We begin by stating one known result comparing extrinsic and intrinsic distances:

\begin{proposition}[Proposition 6.3 in \cite{Niyogi2008}]
    \label{proposition:intrinsic-extrinsic-sqrt-estimate}
    Let $M \subset \R^N$ be a submanifold with reach $\tau$
and $p,q \in M$. If $d:=|p-q|_{\R^N}\leq \frac{\tau}{2}$ then
    \begin{equation}
        d_M(p,q)
        \leq
        \tau-
        \tau \sqrt{1-\frac{2d}{\tau}}.
    \end{equation}
\end{proposition}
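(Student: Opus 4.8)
The plan is to run the classical geodesic-curvature argument: control the ambient acceleration of a minimizing geodesic by $1/\tau$, and convert this into a length bound by testing against a fixed direction.

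First, since $M$ has reach $\tau$, \cref{theorem:reach-characterisation} gives $\rho \ge \tau$, so $|\operatorname{II}(v,v)|_{\R^N} \le 1/\tau$ for every unit $v \in T_xM$. Assuming $p$ and $q$ lie in the same component of $M$ (otherwise $d_M(p,q)=\infty$ and there is nothing to prove), I would take a unit-speed minimizing geodesic $\gamma\colon[0,\ell]\to M$ with $\gamma(0)=p$, $\gamma(\ell)=q$, so $\ell=d_M(p,q)$. Because the Euclidean acceleration of a geodesic of $M$ equals $\operatorname{II}(\gamma',\gamma')$ (its tangential part vanishes), one has $|\gamma''(s)|_{\R^N}=|\operatorname{II}(\gamma'(s),\gamma'(s))|_{\R^N}\le 1/\tau$ for all $s$.

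Next comes the key estimate. Set $e=\gamma'(0)$. The scalar function $s\mapsto\langle\gamma'(s),e\rangle$ equals $1$ at $s=0$ and has derivative $\langle\gamma''(s),e\rangle\ge-1/\tau$, hence $\langle\gamma'(s),e\rangle\ge 1-s/\tau$; integrating from $0$ to $\ell$ gives $\langle q-p,e\rangle\ge\ell-\ell^2/(2\tau)$, and therefore $d=|p-q|_{\R^N}\ge\langle q-p,e\rangle\ge\ell-\ell^2/(2\tau)$. Equivalently $\ell^2-2\tau\ell+2\tau d\ge0$, which (the discriminant being nonnegative since $d\le\tau/2$) forces $\ell\le\tau(1-\sqrt{1-2d/\tau})$ or $\ell\ge\tau(1+\sqrt{1-2d/\tau})$. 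The first alternative is precisely the assertion, so it remains only to exclude the second.

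This is the one genuine obstacle, and it reduces to the a priori bound $\ell\le\tau$ (which rules out the second alternative, since $\tau(1+\sqrt{1-2d/\tau})\ge\tau$, with equality only when $d=\tau/2$, in which case $\tau(1-\sqrt{1-2d/\tau})=\tau$ as well). I would obtain it from the nearest-point projection $\pi_M$: by the tubular neighbourhood theorem (\cref{theorem:tubular-nbhd-theorem}) $\pi_M$ is well defined on the open tube $U^\tau_M$, and — classically, from the bound on the shape operator — its differential has operator norm at most $\tau/(\tau-r)$ on $U^r_M$ for $r<\tau$. Every point of the segment $[p,q]\subset\R^N$ lies within distance $d/2\le\tau/4$ of $\{p,q\}\subset M$, so $[p,q]\subset U^{\tau/2}_M$, and $t\mapsto\pi_M\big((1-t)p+tq\big)$ is a path in $M$ from $p$ to $q$ of length at most $2\,|p-q|_{\R^N}=2d\le\tau$; hence $\ell=d_M(p,q)\le 2d\le\tau$. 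To finish, note that $h(t):=t-t^2/(2\tau)$ is increasing on $[0,\tau]$ with $h\big(\tau(1-\sqrt{1-2d/\tau})\big)=d$, so $h(\ell)\le d$ together with $\ell,\,\tau(1-\sqrt{1-2d/\tau})\in[0,\tau]$ yields $\ell\le\tau(1-\sqrt{1-2d/\tau})=\tau-\tau\sqrt{1-2d/\tau}$, as claimed.
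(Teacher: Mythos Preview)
The paper does not supply its own proof of this proposition; it is simply quoted with attribution to \cite{Niyogi2008}. Your argument is essentially the classical one given there: bound the Euclidean acceleration of a minimizing geodesic by $1/\tau$ via the second fundamental form, project onto the initial direction, integrate to obtain the quadratic inequality $d\ge \ell-\ell^2/(2\tau)$, and then eliminate the spurious root with an a priori bound $\ell\le\tau$. So in substance your proof is correct and matches the source.

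One genuine slip, however: your parenthetical ``otherwise $d_M(p,q)=\infty$ and there is nothing to prove'' is backwards. If $p$ and $q$ lay in different components the asserted inequality would be \emph{false}, not vacuous. Fortunately your own projection argument resolves this: the map $t\mapsto\pi_M((1-t)p+tq)$ is a continuous path in $M$ from $p$ to $q$, so they must lie in the same component, and you get $d_M(p,q)\le 2d$ directly. You should simply run that step \emph{first} (it does not use the geodesic), deduce both connectedness and the a priori bound $\ell\le 2d\le\tau$, and only then introduce the minimizing geodesic $\gamma$. With that reordering the proof is clean.
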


We rewrite this inequality slightly to make it look more similar to \cref{equation:extrinsic-intrinsics-estimate}:

\begin{corollary}
    \label{corollary:intrinsic-extrinsic-sqrt-estimate}
    In the situation of \cref{proposition:intrinsic-extrinsic-sqrt-estimate},
    \begin{equation}
        d_M(p,q)
        \leq
        \sqrt{2\tau} \cdot \sqrt{|p-q|_{\R^N}}.
    \end{equation}
\end{corollary}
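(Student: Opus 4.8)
The plan is to derive this directly from \cref{proposition:intrinsic-extrinsic-sqrt-estimate} by a one-line estimate on the right-hand side. Write $d=|p-q|_{\R^N}$ and recall the hypothesis $d \leq \tau/2$, i.e. $2d/\tau \leq 1$. The quantity to bound is $\tau - \tau\sqrt{1-2d/\tau} = \tau\bigl(1-\sqrt{1-2d/\tau}\bigr)$. The key elementary inequality is $1-\sqrt{1-t} \leq \sqrt{t}$ for all $t \in [0,1]$, applied with $t = 2d/\tau$.

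First I would verify that elementary inequality. Since both sides are nonnegative on $[0,1]$, it is equivalent to $(1-\sqrt{1-t})^2 \leq t$. Expanding the left side gives $1 - 2\sqrt{1-t} + (1-t) = 2 - t - 2\sqrt{1-t}$, so the claim becomes $2 - 2t \leq 2\sqrt{1-t}$, i.e. $1-t \leq \sqrt{1-t}$, which holds because $0 \leq 1-t \leq 1$. (Alternatively one can note $1-\sqrt{1-t} = \frac{t}{1+\sqrt{1-t}} \leq \frac{t}{1} \leq \sqrt{t}$ since $t \leq 1$, which is even cleaner and avoids squaring.)

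Then I would chain the estimates:
\begin{align*}
    d_M(p,q)
    \leq
    \tau\Bigl(1-\sqrt{1-\tfrac{2d}{\tau}}\Bigr)
    \leq
    \tau \sqrt{\tfrac{2d}{\tau}}
    =
    \sqrt{2\tau}\,\sqrt{d}
    =
    \sqrt{2\tau}\cdot\sqrt{|p-q|_{\R^N}},
\end{align*}
where the first inequality is \cref{proposition:intrinsic-extrinsic-sqrt-estimate} and the second is the elementary inequality above with $t=2d/\tau \in [0,1]$.

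I do not anticipate any real obstacle here: the only content is the scalar inequality $1-\sqrt{1-t}\leq\sqrt{t}$, and the rest is algebraic rearrangement. The mild point to be careful about is that the elementary inequality requires $t \leq 1$, which is exactly guaranteed by the standing hypothesis $d \leq \tau/2$ inherited from \cref{proposition:intrinsic-extrinsic-sqrt-estimate}, so no extra assumptions are needed.
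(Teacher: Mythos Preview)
Your proof is correct and is essentially the same as the paper's: both reduce the claim to the elementary inequality $1-\sqrt{1-t}\leq\sqrt t$ for $t\in[0,1]$ (the paper phrases it as $\sqrt{a-b}\geq\sqrt a-\sqrt b$ with $a=1$, $b=1-2d/\tau$) and then chain it with \cref{proposition:intrinsic-extrinsic-sqrt-estimate}. Your rationalization argument $1-\sqrt{1-t}=t/(1+\sqrt{1-t})\leq t\leq\sqrt t$ is a nice alternative justification of that scalar inequality.
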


\begin{proof}
    For $a \geq b \geq 0$ we have $2\sqrt{ab} \geq 2b$, which implies $a+b-2b \geq a+b-2\sqrt{ab}$ and therefore $\sqrt{a-b} \geq \sqrt{a}-\sqrt{b}$. Taking $a=1$ and $b=1- \frac{2d}{\tau}$ in this inequality we obtain,
    \[
        d_M(p,q)
        \leq
        \tau-
        \tau \sqrt{1-\frac{2d}{\tau}}
        =
        \tau
        \left(
        1-\sqrt{1-\frac{2d}{\tau}}
        \right)
        \leq
        \tau 
        \sqrt{\frac{2d}{\tau}}
        =
        \sqrt{2\tau} \cdot \sqrt{|p-q|_{\R^N}}. \qedhere
    \]
\end{proof}

We have not quite achieved the desired comparison inequality \cref{equation:extrinsic-intrinsics-estimate} yet:
in \cref{corollary:intrinsic-extrinsic-sqrt-estimate}, the distance on the right-hand side has a square root, while the left-hand side does not.
Thus, for small distances, \cref{corollary:intrinsic-extrinsic-sqrt-estimate} is weaker than our desired inequality.
However, it is reasonable to suggest that our desired inequality holds because every Riemannian manifold is locally Euclidean, so for small distances we see that $d_M$ is roughly equal to $|\cdot|_{\R^N}$.
We will make this rigorous in the remainder of this section, beginning with the following technical proposition:

\begin{proposition}
    \label{proposition:curve-with-distance-maximum-has-large-curvature}
    Let $\gamma:[0,L] \rightarrow \overline{B^N(\xi)}$ be a unit speed curve in the closure of the ball of radius $\xi$, centred at the origin (after a translation, we can assume it is centred in any point of $\R^N$).
    If there is $t_0 \in (0,L)$ such that $|\gamma(t)|_{\R^N}$ has a local maximum at $t_0$, then
    \begin{equation}
        |\gamma ''(t_0) |_{\R^N} \geq \frac{1}{\xi}.
    \end{equation}
\end{proposition}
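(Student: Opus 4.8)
The plan is to reduce to a two-dimensional calculation in the plane spanned by $\gamma(t_0)$ and $\gamma'(t_0)$, then exploit the local maximum condition together with the constraint $|\gamma(t)|_{\R^N}\le\xi$. First I would set $r(t):=|\gamma(t)|_{\R^N}^2=\langle\gamma(t),\gamma(t)\rangle$, which is smooth, and compute its first two derivatives: $r'(t)=2\langle\gamma(t),\gamma'(t)\rangle$ and $r''(t)=2\langle\gamma'(t),\gamma'(t)\rangle+2\langle\gamma(t),\gamma''(t)\rangle=2+2\langle\gamma(t),\gamma''(t)\rangle$, where I used that $\gamma$ is unit speed. Since $|\gamma(t)|_{\R^N}$ has a local maximum at $t_0$, so does $r$, hence $r'(t_0)=0$ and $r''(t_0)\le 0$. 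The first condition gives $\langle\gamma(t_0),\gamma'(t_0)\rangle=0$, and the second gives $\langle\gamma(t_0),\gamma''(t_0)\rangle\le -1$.

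Next I would apply Cauchy--Schwarz to the inequality $\langle\gamma(t_0),\gamma''(t_0)\rangle\le -1$: this yields $|\gamma(t_0)|_{\R^N}\cdot|\gamma''(t_0)|_{\R^N}\ge|\langle\gamma(t_0),\gamma''(t_0)\rangle|\ge 1$. Combining with the hypothesis $|\gamma(t_0)|_{\R^N}\le\xi$, we obtain
\begin{align*}
    |\gamma''(t_0)|_{\R^N}
    \ge
    \frac{1}{|\gamma(t_0)|_{\R^N}}
    \ge
    \frac{1}{\xi},
\end{align*}
as claimed. One small caveat to handle is the degenerate case $\gamma(t_0)=0$: then $r(t_0)=0$ would be a local maximum of a nonnegative function, forcing $r\equiv 0$ near $t_0$, which contradicts $\gamma$ being unit speed; so in fact $\gamma(t_0)\ne 0$ and the division is legitimate.

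The only real subtlety, and the step I would be most careful about, is justifying $r''(t_0)\le 0$ from the fact that $t_0$ is a \emph{local maximum} of $|\gamma(t)|_{\R^N}$ rather than of a $C^2$ function known a priori to have a nondegenerate critical point: since $r=|\gamma|^2$ is $C^2$ and has an interior local maximum at $t_0$, the standard second-derivative test gives $r'(t_0)=0$ and $r''(t_0)\le 0$ directly, so there is no genuine obstacle here — it is just a matter of stating it cleanly. Everything else is a one-line Cauchy--Schwarz estimate, so I expect the proof to be short.
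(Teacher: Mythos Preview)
Your proof is correct and is essentially identical to the paper's argument: set $u(t)=|\gamma(t)|_{\R^N}^2$, use $u''(t_0)\le 0$ at the interior local maximum to obtain $\langle\gamma(t_0),\gamma''(t_0)\rangle\le -1$, and conclude via Cauchy--Schwarz and $|\gamma(t_0)|_{\R^N}\le\xi$. Your extra remarks on the degenerate case $\gamma(t_0)=0$ and on the second-derivative test are harmless additions the paper omits; the announced ``reduction to a two-dimensional plane'' is never actually used and can be dropped.
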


\begin{proof}
    Let $u(t):=|\gamma(t)|^2_{\R^N}$.
    Then
    $u''(t)=2 \< \gamma''(t), \gamma(t)\>+2$.
    Thus, at $t_0$:
    \[
        0
        \geq
        u''(t_0)
        =
        2 \<\gamma''(t_0), \gamma(t_0)\> +2
        \geq
        -2 |\gamma''(t_0)|_{\R^N} \cdot |\gamma(t_0)|_{\R^N}+2,
    \]
    where we used the Cauchy-Schwarz inequality $|\< \gamma(t_0),\gamma''(t_0)\>| \leq |\gamma(t_0)|_{\R^N}\cdot |\gamma''(t_0)|_{\R^N}$ in the last step. 
    Hence
    \[
    |\gamma ''(t_0) |_{\R^N} \geq  \frac{1}{|\gamma(t_0)|_{\R^N}} \geq \frac{1}{\xi}.
    \qedhere
    \]
\end{proof}

Now comes another technical proposition.
Assume that $\gamma$ is a curve in a ball emanating from its midpoint.
If the curvature of the curve is very small compared to the size of the ball, it will leave the ball in a roughly straight line.
Thus, depending on the curvature of $\gamma$ and the radius of the ball, it is possible to prove an upper bound for the length of $\gamma$, which is achieved in the following proposition:

\begin{proposition}
    \label{proposition:trapped-small-curvature-curve-is-short}
    Let $\gamma:[0,L] \rightarrow \overline{B^N(\xi)}$ be a unit speed curve in the closure of the ball of radius $\xi$. If $\gamma(0)=0$, $|\gamma''| \leq \frac{\xi^{-1}}{2}$,  and $\gamma$ has a unique point of intersection with the boundary of the ball, which is given at $\gamma(L) \in S^{N-1}(\xi)$, then $L \leq 2 \xi$.
\end{proposition}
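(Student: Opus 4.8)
The plan is to control the squared distance from $\gamma(t)$ to the centre of the ball. Set $u(t) := |\gamma(t)|_{\R^N}^2$. Working with $u$ rather than with $|\gamma(t)|_{\R^N}$ is deliberate: the Euclidean norm fails to be differentiable at the origin, and $\gamma(0)=0$, so $u$ is the right quantity to differentiate. Immediately $u(0)=0$, and since $\gamma$ is unit speed, $u'(0) = 2\langle \gamma(0),\gamma'(0)\rangle = 0$.

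The key step is a lower bound on $u''$. Differentiating twice gives $u''(t) = 2|\gamma'(t)|_{\R^N}^2 + 2\langle \gamma(t),\gamma''(t)\rangle = 2 + 2\langle \gamma(t),\gamma''(t)\rangle$. By Cauchy--Schwarz together with the two hypotheses $|\gamma(t)|_{\R^N}\le \xi$ and $|\gamma''(t)|_{\R^N}\le \tfrac{1}{2\xi}$, we get $|\langle \gamma(t),\gamma''(t)\rangle| \le \xi\cdot\tfrac{1}{2\xi} = \tfrac12$, hence $u''(t)\ge 1$ for all $t\in[0,L]$. Integrating this twice from $0$, using $u(0)=u'(0)=0$, yields $u'(t)\ge t$ and then $u(t)\ge t^2/2$ on $[0,L]$.

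Finally, the image of $\gamma$ lies in $\overline{B^N(\xi)}$, so $u(t)\le \xi^2$ throughout; in particular at $t=L$, where $\gamma(L)\in S^{N-1}(\xi)$ forces $u(L)=\xi^2$, we obtain $L^2/2 \le \xi^2$, i.e. $L\le \sqrt{2}\,\xi \le 2\xi$. I do not expect any real obstacle here: the argument is a one-dimensional ODE comparison, and the only point requiring care is passing to $|\gamma|^2$ to sidestep the singularity of the norm at $\gamma(0)$. It is worth noting that strict convexity of $u$ together with $u'(0)=0$ makes $u$ strictly increasing on $(0,L]$, so the point where $\gamma$ meets the boundary sphere is automatically unique; the uniqueness assumption in the statement is therefore needed only to fix the meaning of $L$ in the applications, not for the length bound itself.
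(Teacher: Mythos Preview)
Your argument is correct and in fact yields the sharper bound $L\le\sqrt{2}\,\xi$; the paper only obtains $L\le 2\xi$.

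The two proofs take genuinely different routes. The paper fixes coordinates so that $\gamma'(0)=e_1$ and tracks the single coordinate $\gamma_1(t)$, showing via two applications of the fundamental theorem of calculus that $|\gamma_1(t)|\ge t-\tfrac{t^2}{2}\max|\gamma''|$; evaluating at $t=2\xi$ forces $|\gamma_1(2\xi)|\ge\xi$, so the curve has reached the sphere by then. Your approach is coordinate-free: you work with $u(t)=|\gamma(t)|^2$ and exploit \emph{both} hypotheses $|\gamma|\le\xi$ and $|\gamma''|\le\tfrac{1}{2\xi}$ in a single Cauchy--Schwarz step to get $u''\ge 1$, whence $u(t)\ge t^2/2$. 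This is cleaner, gives the better constant, and---as you observe---shows that $u$ is strictly increasing, so the uniqueness of the boundary intersection comes for free rather than being needed as an input. The paper's argument, by contrast, uses only the curvature bound in its estimate and then implicitly invokes the uniqueness hypothesis to conclude $L\le 2\xi$ from $|\gamma(2\xi)|=\xi$.
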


\begin{proof}
    Without loss of generality $\gamma'(0)=(1,0,\dots,0)$.
    We denote the first coordinate of $\gamma$ by $\gamma_1$ and 
    \begin{align*}
        |\gamma_1(t)|
        &=
        \left|
        \int_0^t \gamma_1'(s) \d s
        \right|
        =
        \left|
        \int_0^t \left( \gamma_1'(0)+ \int_0^s \gamma_1''(z) \d z \right) \d s
        \right| \geq   \left|
        \int_0^t  \gamma_1'(0) ds \right| - \left| \int_0^t \int_0^s \gamma_1''(z) \d z \d s
        \right| \\
        &\geq
        t-\frac{1}{2}t^2 \max(|\gamma''|)
   \end{align*}
    where we applied the fundamental theorem of calculus in the first two steps.
    Thus, at $t=2\xi$:
    \[
        |\gamma_1(2\xi)|
        \geq
        2\xi-2 \xi^2 \left( \frac{\xi^{-1}}{2} \right)
        =
        \xi.
    \]
    Thus, by time $2\xi$ the curve must intersect $S^{N-1}(\xi)$, so $L \leq 2\xi$.
\end{proof}

Putting everything together, we achieve our desired distance comparison inequality \cref{equation:extrinsic-intrinsics-estimate} with an explicit constant:

\begin{corollary}
    \label{corollary:intrinsic-extrinsic-distance-estimate}
    If $x,y \in M$ and $|x-y|_{\R^N} < \frac{\tau}{2}$, then
    \begin{equation}
        d_M(x,y) \leq 2 |x-y|_{\R^N}.
    \end{equation}
\end{corollary}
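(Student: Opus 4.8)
The plan is to deduce \cref{corollary:intrinsic-extrinsic-distance-estimate} by running a maximal-length geodesic in $M$ through a small ambient ball and applying the two technical propositions about trapped curves. Fix $x,y \in M$ with $d:=|x-y|_{\R^N} < \tau/2$, and let $\gamma:[0,L]\to M$ be a minimising geodesic in $M$ from $x$ to $y$, parametrised by unit speed, so that $L = d_M(x,y)$. The goal is to show $L \leq 2d$. Since $\gamma$ is a geodesic in $M\subset\R^N$, its acceleration $\gamma''$ is the second fundamental form $\operatorname{II}(\gamma',\gamma')$, which is normal to $M$; by the definition of $\rho$ and \cref{theorem:reach-characterisation} we have $|\gamma''(t)|_{\R^N} \leq \rho^{-1} \leq \tau^{-1}$ for all $t$.

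Next I would set up the ambient ball. Let $\xi$ be a radius to be chosen, and consider the closed ball $\overline{B^N(\xi)}$ centred at $x$. The idea is to choose $\xi$ small enough that (i) $2\xi < \tau$, so that by \cref{proposition:small-ball-has-connected-intersection-with-M} the intersection $M\cap B^N(\xi)$ has a single connected component and, more importantly, (ii) $|\gamma''|\leq \tau^{-1} \leq \frac{\xi^{-1}}{2}$, i.e. $\xi \leq \tau/2$. With such a $\xi$, look at the portion of $\gamma$ starting at $x=\gamma(0)$ up to the first time $t_\ast$ it hits $S^{N-1}(\xi)$. \Cref{proposition:curve-with-distance-maximum-has-large-curvature} shows that $|\gamma(t)|_{\R^N}$ (distance from $x$) cannot have an interior local maximum strictly below $\xi$: at such a maximum we would get $|\gamma''|\geq \xi^{-1}$, contradicting $|\gamma''|\leq \frac{\xi^{-1}}{2}$. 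Hence $t\mapsto |\gamma(t)-x|_{\R^N}$ has no interior local max, so on $[0,t_\ast]$ the curve meets the sphere $S^{N-1}(\xi)$ only at the single endpoint $\gamma(t_\ast)$; this is exactly the hypothesis of \cref{proposition:trapped-small-curvature-curve-is-short}, which then gives $t_\ast \leq 2\xi$.

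Now comes the quantitative step: I want to take $\xi$ as small as possible while still containing $y$. If $y \in \overline{B^N(\xi)}$ — which holds as soon as $\xi \geq d$ — then the minimising geodesic $\gamma$ from $x$ to $y$ need not have hit the sphere at all, or it hits it exactly at $y$; in either case the subarc up to $y$ has length at most $2\xi$, so $L = d_M(x,y) \leq 2\xi$. Choosing $\xi = d$ is legitimate provided $d \leq \tau/2$, which is our hypothesis, and this choice also makes $\xi = d \le \tau/2$ so the curvature bound $|\gamma''|\le \tau^{-1} \le (2\xi)^{-1} \cdot \tfrac{\xi}{\xi}$... more precisely $|\gamma''| \le \tau^{-1} \le \frac{\xi^{-1}}{2}$ as required. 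Therefore $d_M(x,y)\leq 2d = 2|x-y|_{\R^N}$.

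The main obstacle I anticipate is the bookkeeping around \emph{which} subarc of $\gamma$ to feed into \cref{proposition:trapped-small-curvature-curve-is-short}: one must argue carefully that a minimising geodesic from $x$ to $y$ stays inside $\overline{B^N(d)}$ up to the last moment (rather than wandering out and coming back), and that its first sphere-crossing is unique. The uniqueness of the crossing is precisely where \cref{proposition:curve-with-distance-maximum-has-large-curvature} is needed, and the "stays inside" part follows once we know there is no interior local maximum of the radial function below $\xi$ combined with the fact that $\gamma$ is length-minimising (so it cannot afford to exit and re-enter). A secondary subtlety is making sure the curvature bound $|\gamma''(t)|_{\R^N} = |\operatorname{II}(\gamma',\gamma')|_{\R^N} \le \rho^{-1} \le \tau^{-1}$ is applied with the correct inequality direction and that $\tau^{-1} \le \frac{\xi^{-1}}{2}$ under the choice $\xi = d \le \tau/2$; all of this is routine once the geometric picture is fixed.
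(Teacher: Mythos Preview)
Your overall strategy matches the paper's --- use \cref{proposition:curve-with-distance-maximum-has-large-curvature} to rule out interior local maxima of the radial function and then feed the geodesic into \cref{proposition:trapped-small-curvature-curve-is-short} with $\xi=|x-y|_{\R^N}$ --- but there is a genuine gap in the step where you claim the minimising geodesic ``hits the sphere exactly at $y$'' or ``stays inside $\overline{B^N(d)}$''.

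Your application of \cref{proposition:curve-with-distance-maximum-has-large-curvature} only rules out interior local maxima of $t\mapsto|\gamma(t)-x|_{\R^N}$ that occur at radial value \emph{at most} $\xi$: at such a maximum the proposition gives $|\gamma''|\geq \xi^{-1}$, contradicting $|\gamma''|\leq \tau^{-1}\leq \tfrac{1}{2}\xi^{-1}$. But this does \emph{not} prevent $\gamma$ from crossing $S^{N-1}(\xi)$ at some point $\gamma(t_\ast)\neq y$, wandering out to a maximum radial distance $R>\xi$, and then returning to $y$. At that outer maximum the proposition yields only $|\gamma''(t_0)|\geq 1/R$, and this contradicts $|\gamma''|\leq 1/\tau$ only if you already know $R<\tau$ --- which nothing in your argument establishes. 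The appeal to ``$\gamma$ is length-minimising so it cannot afford to exit and re-enter'' is not valid: a geodesic minimising intrinsic length in $M$ is under no a~priori obligation to stay close to $x$ in the ambient metric.

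The missing ingredient, and what the paper uses, is \cref{corollary:intrinsic-extrinsic-sqrt-estimate}: since $d=|x-y|_{\R^N}<\tau/2$, one gets $d_M(x,y)\leq\sqrt{2\tau d}<\tau$, hence $L<\tau$ and every point of $\gamma$ lies in the open ball $B(x,\tau)$. Now any interior local maximum of the radial function is at some $R<\tau$, so \cref{proposition:curve-with-distance-maximum-has-large-curvature} gives $|\gamma''(t_0)|\geq 1/R>1/\tau$, contradicting the curvature bound. This is what forces $\gamma|_{[0,L)}\subset B(x,\xi)$, after which your application of \cref{proposition:trapped-small-curvature-curve-is-short} goes through exactly as you wrote.
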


\begin{proof}
    By \cref{corollary:intrinsic-extrinsic-sqrt-estimate} we have $d_M(x,y) \leq \tau$.
    So, the shortest unit speed geodesic $\gamma:[0,L] \rightarrow M$ from $x$ to $y$ is contained in $\overline{B(x,\tau)}$.

    Write $\xi := |x-y|_{\R^N}$.
    Observe that the curve $\gamma |_{[0,L)}$ is contained in $B(x,\xi)$:
    if $\gamma$ leaves $B(x,\xi)$ before time $L$ but returns to $y \in \overline{B(x,\xi)}$ at time $L$, then it has a point of maximum distance from $x$ in $B(x,\tau)$.
    By \cref{proposition:curve-with-distance-maximum-has-large-curvature}, there exists $t_0$ such that $|\gamma ''(t_0)| > \frac{1}{\tau}$.
    This contradicts \cite[Proposition 6.1]{Niyogi2008}.

    Hence, $\gamma:[0,L] \rightarrow \overline{B^n(\xi)}$ is a unit speed curve with $\gamma(0)=x$, $\gamma(L)=y \in S^{n-1}(x,\xi)$ and
    \[
        |\gamma''|
        \leq
        \frac{1}{\tau}
        <
        \frac{1}{2\xi}.
    \]
    Moreover, $\gamma$ that meets $S^{n-1}(x,\xi)$ only at $\gamma(L)=y$.
    Thus, by \cref{proposition:trapped-small-curvature-curve-is-short}, $d_M(x,y)=L \leq 2 \xi$, which proves the claim.
\end{proof}

\section{Application: computation of homology groups}
\label{section:application-computation-of-homology}

Let $M=Z(f) \subset \R^N$ and as before, let $\tau$ be its reach.
One motivation for defining and computing the reach of $M$ is to compute its homology.
Roughly speaking, the reach tells how densely one needs to sample from $M$ so that one can recover the homology of $M$ from the persistent homology of the sample.
The introduction of \cite{Dufresne2019} gives an overview of work in the area.
\cite{Dufresne2019} produces arbitrarily dense samples, and if the density is high compared to $\tau$, then one recovers the homology of $M$.
However, it gives no way of bounding $\tau$. In
\cite{DiRocco2022} a guaranteed correct computation of the Betti numbers of $M$ is achieved using this strategy by bounding the smallest bottleneck, but without explicitly bounding $\tau$.

In \cref{section:lower-bound-for-reach} we explained how to obtain a lower bound for the reach.
This can immediately be used as input for the algorithms from \cite{Dufresne2019,DiRocco2022} in order to provably correctly compute the Betti numbers of $M$.

Apart from persistent homology, there are two more flavours for computing Betti numbers of algebraic varieties:
first, using techniques from semi-algebraic sets, pioneered in \cite{Basu2006}.
This yields provably correct results, but no implementation is publicly available to our knowledge.
Second, for some complex spaces there exist Groebner basis techniques for computing refined Betti numbers, see \cite{Buchberger1985} for an early algorithm for computing Groebner basis, and \cite[p.281-321]{Eisenbud2001} for examples of how they are used to compute refined Betti numbers.
In general, these methods do not apply to real algebraic varieties.
Also, these calculations may take a prohibitively long time in some cases, see e.g. \cite[Section 4.4]{Aggarwal2024}.

In this section, we present an alternative way to use the reach to compute the Betti numbers of a curve $M=Z(f)$ in a bounded set of $[-L,L]^2 \subset \R^2$.
We start by covering $[-L,L]^2$ with closed boxes whose side length is small compared to the reach, and make the following definition:

\begin{definition}
\label{definition:selected-box}
    We say that a box $b$ is \emph{selected}, if there exist two vertices $v_1,v_2$ of $b$, such that $f(v_1) \geq 0$ and  $f(v_2) \leq 0$.
\end{definition}

By the mean value theorem, every selected box $b$ must contain a point of $M$, i.e. $b \cap M \neq \emptyset$.
However, the converse is not true:
a box may be not selected, but still contain a point of $M$.
This situation is shown in \cref{figure:unselected-box}.
The surprising result proved in this section is that the homology groups of $M$ and the union of all selected boxes agree nonetheless.
Because checking whether a box is selected or not is very simple, this allows for fast computation of homology groups of $M$.

\begin{figure}[htbp]
\centering
\includegraphics[width=12cm]{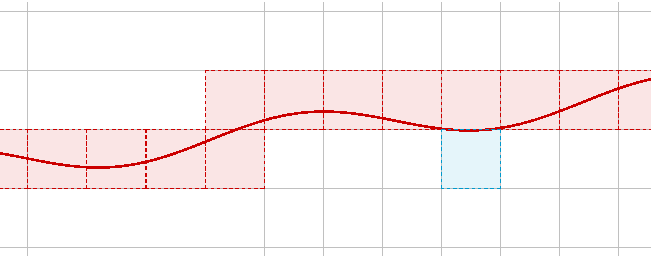}
\caption{Picture of a variety $M=Z(f) \subset \R^2$ in red. Red shaded boxes are \emph{selected} according to \cref{definition:selected-box}. The blue shaded box contains a point of $M$ but has not been selected. The conclusion of \cref{corollary:boxes-and-M-have-same-homology-groups} remains true: the union of all selected boxes has the same Betti numbers as $M$.}
\label{figure:unselected-box}
\end{figure}

Now let $M \subset [-L,L]^2 \subset \R^2$ be decomposed into closed boxes of side length 
\begin{align}
\label{equation:delta-choice}
\delta \leq \tau/2.37,
\end{align}
so that two adjacent closed boxes intersect in a common face.
Let
\begin{align}\label{def_B}
    B & :=
    \bigcup_{b \text{ selected}}
    b,
    \\
    \tilde{B} & :=
    \bigcup_{b \cap M \neq \emptyset}
    b.
\end{align}

\begin{proposition}
    \label{proposition:M-B-tilde-deformation-retract}
    The set $M$ is a deformation retract of $\tilde{B}$.
\end{proposition}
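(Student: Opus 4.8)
The plan is to realise the deformation retraction as the nearest-point (normal) projection $\pi_M$ onto $M$, homotoped to the identity along straight segments; the only nontrivial point will be that this homotopy never leaves $\tilde B$.

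First I would set up the projection. Every box of the grid has diameter $\sqrt 2\,\delta$, and $\delta\le\tau/2.37$ forces $\sqrt 2\,\delta\le(\sqrt 2/2.37)\tau<\tau$. Hence any box $b$ with $b\cap M\neq\emptyset$ lies in $\overline{U^{\sqrt 2\delta}_M}$ (pick $q\in b\cap M$; then $d(y,M)\le|y-q|_{\R^N}\le\diam(b)$ for $y\in b$), so $M\subset\tilde B\subset\overline{U^{\sqrt 2\delta}_M}\subset U^\tau_M$. On $U^\tau_M$ the endpoint map $E:V^\tau\to U^\tau_M$ is a diffeomorphism by \cref{theorem:tubular-nbhd-theorem}; writing $E^{-1}(x)=(\pi_M(x),\nu(x))$ gives a smooth nearest-point projection $\pi_M:\tilde B\to M$ and the normal displacement $\nu(x)=x-\pi_M(x)$ with $|\nu(x)|_{\R^N}=d(x,M)$. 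I would then put $H(x,t)=(1-t)x+t\,\pi_M(x)=\pi_M(x)+(1-t)\nu(x)$, so that $H(\cdot,0)=\mathrm{id}_{\tilde B}$, $H(\cdot,1)=\pi_M$ has image in $M$, and $H(p,t)=p$ for $p\in M$ (since $\nu(p)=0$). Thus $M$ is a strong deformation retract of $\tilde B$ as soon as one knows $H(\tilde B\times[0,1])\subseteq\tilde B$, i.e.\ that $\tilde B$ is \emph{radially convex towards $M$}: for every $x\in\tilde B$ the whole segment $[x,\pi_M(x)]$ lies in $\tilde B$.

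The remaining task, and the crux of the argument, is to prove this radial convexity. Given $x$ in a box $b$ with $b\cap M\neq\emptyset$, set $p=\pi_M(x)$, take $z=(1-s)x+sp$ on the segment, and let $b'$ be a grid box containing $z$; I must show $b'\cap M\neq\emptyset$. The facts at hand are: $d(x,M)=|x-p|_{\R^N}\le\diam(b)=\sqrt 2\,\delta$; the segment $[x,p]$ is the normal segment at $p$, so $\pi_M(z)=p$ and $d(z,M)=s\,d(x,M)\le\sqrt 2\,\delta$; and, since $M$ has reach $\tau$, any portion of $M$ inside a ball of radius $<\tau$ is connected (\cref{proposition:small-ball-has-connected-intersection-with-M}) and has principal curvatures $\le 1/\tau$. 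Because $\sqrt 2\,\delta<\tau$, I would use this to approximate $M$, on the scale of the few boxes surrounding $p$, by its affine tangent line $\ell$ at $p$, with a $C^1$-error governed by $\delta/\tau$; for a genuine line $\ell$ the set $\tilde B$ is a subgrid of a slab of width $O(\delta)$ around $\ell$, for which radial convexity is an elementary planar calculation, and the curvature error terms are absorbed precisely because $2.37$ exceeds the constant that calculation produces. Concluding that $b'$ meets $M$ then shows $\tilde B$ is radially convex, hence $H$ is a strong deformation retraction of $\tilde B$ onto $M$.

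The hard part is clearly this last step: controlling how the fixed square grid cuts through the curved tube $U^{\sqrt 2\delta}_M$ around $M$. Everything else — defining $\pi_M$ via the tubular neighbourhood theorem, checking it is the right map, and building $H$ — is formal, and the particular value $2.37$ in \cref{equation:delta-choice} is exactly the threshold at which the planar estimate underlying radial convexity goes through.
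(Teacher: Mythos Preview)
Your setup is correct and is exactly how the paper begins: one has $\tilde B\subset U^\tau_M$, the nearest-point projection $\pi_M$ is well-defined and smooth there, and the straight-line homotopy $H(x,t)=(1-t)x+t\,\pi_M(x)$ is the natural candidate. But the crucial step you call \emph{radial convexity}---that the segment $[x,\pi_M(x)]$ stays in $\tilde B$---is never proved; you only offer a heuristic (``approximate $M$ by its tangent line, elementary planar calculation, curvature error absorbed by $2.37$'') with no actual estimate, no verification that a line satisfies it, and no explanation of where the constant $2.37$ enters. The paper in fact asserts the opposite: it explicitly writes that ``the map $H$ need not define a deformation retract from $\tilde B$ to $M$ because its image need not be contained in $\tilde B$.'' So what you have identified as ``the hard part'' is precisely the claim the authors did not believe, or at least could not establish, and your proposal gives no argument for it.

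The paper circumvents radial convexity entirely. It first proves, via the angle estimate and \cite[Theorem~2.2]{Aamari2019}, that for any $p$ in a box $b\in\tilde B$ the foot $\pi_M(p)$ lies in a box $b'$ adjacent to $b$ (sharing at least a vertex); this is where the constant $2.37$ in \cref{equation:delta-choice} is actually used. It follows that the straight segment $[p,\pi_M(p)]$ lies in the $3\times 3$ block around $b$ and, in particular, avoids the \emph{midpoints} of boxes not in $\tilde B$. The paper then introduces a ``push'' map that radially retracts each non-$\tilde B$ box onto its boundary (undefined only at those midpoints) and takes $\text{push}\circ H$ as the deformation retraction. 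In short: the paper proves a weaker geometric fact (adjacency of $\pi_M(p)$) and repairs the homotopy with an auxiliary retraction, rather than trying to show $H$ already lands in $\tilde B$. To complete your approach you would need a genuine proof of radial convexity---and the paper's remark suggests this may fail---or else adopt a correction device like the push map.
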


\begin{proof}
    Given a box $b \in \tilde{B}$, since the box diagonal is less than the reach $\tau$, we have that for all $p \in b$ there exists a unique nearest point $\pi(p)$ on $M$ for $p$.
    We first claim that $\pi(p)$ must be in a box adjacent to $b$, i.e. $\pi(p) \in b'$ with $b \cap b' \neq \emptyset$, so although the two boxes need not have an edge in common, they must have at least one vertex in common.

    Assume that $b'$ is not adjacent to $b$.
    Let $x:= \pi(p)$ and let $q \in b \cap M$ be the nearest point to $p$ in $b$.
    Then $d(p,x) < d(p,q) \leq \sqrt{2}\delta$, because $x$ is closer to $p$ than $q$, and $p$ and $q$ are in the same box, hence $d(q,x) \leq d(q,p)+d(p,x) \leq 2 \cdot \sqrt{2} \delta$.
    On the other hand:
    \begin{align*}
        \angle(q-x, p-x) \leq 2 \arctan (1/2),
    \end{align*}
    because the angle $\angle(q-y, p-y)$ for any points $p,q \in b$ and $y$ in any box not adjacent to $b$ is maximal in the situation of \cref{fig:box-example}, where it is $2 \cdot \arctan(1/2) \approx 0.93$.

    \begin{figure}[htbp]
        \centering
        \includegraphics[width=7cm]{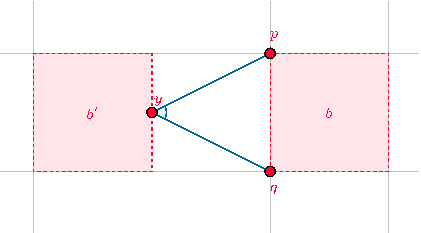}
        \caption{A constellation of points attaining the maximum value for $\angle(q-y, p-y)$, namely $\angle(q-y, p-y) = 2 \arctan (1/2)$.}
        \label{fig:box-example}
    \end{figure}

    Hence,
    \begin{align*}
        \angle (q-x, T_x M)
        \geq
        \angle(p-x, T_x M)
        -
        \angle(q-x, p-x)
        \geq
        \pi/2-2 \arctan(1/2)
        \approx
        0.64,
    \end{align*}
    where we used that $p-x$ is orthogonal to $T_x M$.
    Thus, by \cite[Theorem 2.2]{Aamari2019}:
    \begin{align*}
    \tau
    &\leq
    \frac{|q-x|}{2d((q-x)/|q-x|, T_x M)}
    \\
    &\leq
    \frac
    {2 \cdot \sqrt{2} \delta}
    {2 \sin (\angle(q-x, T_x M))}
    \\
    &\leq
    \frac
    {2 \cdot \sqrt{2} \delta}
    {2 \sin(\pi/2-2\arctan(1/2))}
    \\
    &\approx 2.36 \cdot \delta,
    \end{align*}
    which is a contradiction due to our choice of $\delta$ from \cref{equation:delta-choice}.
    Thus, the assumption was wrong, and $b'$ \emph{is} adjacent to $b$.
    Denote by $m := \{x \in \R^2: x \text{ is the midpoint of a box}\}$.
    Define the map 
    \begin{align*}
        \text{push}
        :
        \R^2 \setminus ( m \setminus \tilde{B})
        &\rightarrow \R^2
        \\
        x & \mapsto
        \begin{cases}
            x, & \text{ if } x \in \tilde{B}\\
            \overrightarrow{z x} \cap \partial b',
            & \text{ if }
            x \text{ lies in a box } b' \text{ with centre } z,
        \end{cases}
    \end{align*}
    i.e. $\text{push}$ is the identity on $\tilde{B}$ and pushes the content of every other box to its boundary, which is only well-defined away from its midpoint.
    Define the homotopy
    \begin{align*}
        H:[0,1] \times \tilde{B} & \rightarrow \R^N
        \\
        (t,p) & \mapsto \gamma_p(t),
    \end{align*}
    where $\gamma_p(t)$ is the straight line from $p$ to $\pi(p)$ with $\gamma_p(0)=p$ and $\gamma_p(1)=\pi(p)$.
    The map $H$ need not define a deformation retract from $\tilde{B}$ to $M$ because its image need not be contained in $\tilde{B}$.

    We showed before that $p$ and $\pi(p)$ lie in adjacent boxes, so that $\Im(H)$ is contained in the domain of $\text{push}$.
    Thus, $\text{push} \circ H$ is well defined, and is a composition of two continuous maps, so it defines a deformation retract from $\tilde{B}$ to $M$, which proves the claim.
\end{proof}

\begin{proposition}
    \label{proposition:not-selected-box-has-at-most-one-nonempty-face}
    If a box $b\in \tilde{B}$ is not selected, then there exists at most one face $F$ of $b$ such that $F \cap M \neq \emptyset$.
\end{proposition}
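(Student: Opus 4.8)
The plan is to argue by contradiction: assume $b$ is not selected but $M$ meets two distinct faces (edges) $F_1\neq F_2$ of $b$. First I would unpack the hypothesis ``not selected''. If some vertex $v$ of $b$ had $f(v)=0$, then $v_1=v_2=v$ would make $b$ selected in the sense of \cref{definition:selected-box}; and if the two endpoints $v,v'$ of some edge had $f(v)\geq 0\geq f(v')$, then $b$ would again be selected. Hence $f$ is nowhere zero on the vertex set and has a constant sign along each edge; since the four vertices of the square are joined through the edges, $f$ has one fixed sign at all of them, and after possibly replacing $f$ by $-f$ (which does not change $M$) we may assume $f>0$ at every vertex.

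Next I would pin down the shape of $M\cap b$. Write $\lambda=2\eta$ for the length of the smallest bottleneck and recall $\tau=\min\{\eta,\rho\}$ from \cref{theorem:reach-characterisation}, so $\lambda\geq 2\tau$ and $\rho\geq\tau$. Since $\diam(b)=\sqrt2\,\delta\leq\sqrt2\,\tau/2.37<\tau\leq\lambda$ (using $\delta\leq\tau/2.37$ from \cref{equation:delta-choice}), the set $M\cap b$ cannot meet two distinct connected components of $M$: the closest pair of points between two compact components is realised by a segment orthogonal to $M$ at both endpoints, i.e.\ a bottleneck, which by definition has length at least $\lambda$. Therefore $M\cap b=S\cap b$ for a single component $S$ of $M$ (a smooth circle), and $S$ has curvature $|\gamma''|\leq\rho^{-1}\leq\tau^{-1}$ everywhere along a unit-speed parametrisation $\gamma$. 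Moreover $S\not\subseteq b$: otherwise $S$ lies in the closed ball of radius $\tfrac{\sqrt2}{2}\delta$ about the centre of $b$, and \cref{proposition:curve-with-distance-maximum-has-large-curvature}, applied at the point of $S$ farthest from that centre, gives $|\gamma''|\geq\sqrt2/\delta>1/\tau$, a contradiction. Hence $S\cap b$ is a nonempty disjoint union of closed arcs, each of whose endpoints lies on $\partial b$ (a degenerate arc being a single point of tangency of $S$ with $\partial b$).

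Now choose $x_i\in S\cap F_i$; note $x_1\neq x_2$ and $|x_1-x_2|\leq\diam(b)<\lambda$. In the generic situation $x_1,x_2$ lie on two distinct arcs $A_1,A_2$ of $S\cap b$, and one minimises the Euclidean distance over $\overline{A_1}\times\overline{A_2}$; if the minimiser $(p_1,p_2)$ lies in the relative interiors of the two arcs, then $p_1-p_2\perp T_{p_2}M$ and $p_2-p_1\perp T_{p_1}M$, so $(p_1,p_2)$ is a bottleneck pair of length $<\lambda$, contradicting the minimality of $\lambda$. The remaining configurations --- $x_1$ and $x_2$ on the same arc of $S\cap b$, or $S$ touching $F_1$ (or $F_2$) only tangentially, or the closest pair of two arcs being attained at an arc endpoint on $\partial b$ --- I would handle with a curvature/angle estimate of the same flavour as the $\arctan(1/2)$ computation in the proof of \cref{proposition:M-B-tilde-deformation-retract}: a unit-speed curve with $|\gamma''|\leq\tau^{-1}$ confined to a box of side $\delta\leq\tau/2.37$ is too nearly straight to reach two different edges, since to be tangent to (or to have endpoints on) two distinct edges it would have to turn by a definite angle over an arc length of order $\delta$, forcing $|\gamma''|>\tau^{-1}$ once $\delta\leq\tau/2.37$.

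I expect this last curvature/angle estimate --- together with the bookkeeping of which edges $M$ meets, whether each contact is transversal or tangential, and the distinction between adjacent and opposite edges of the square --- to be the main obstacle; the reduction steps above are routine. A clean way to organise it is probably to prove first that each individual arc of $S\cap b$ can meet at most one edge of $b$ (counting its endpoints), and then to combine connectedness of $S$ with the bottleneck argument above to exclude two different arcs reaching two different edges.
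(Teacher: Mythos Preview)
Your reduction steps (unpacking ``not selected'' to a fixed sign of $f$ on the vertices, showing $M\cap b$ lies in a single component $S$, and excluding $S\subseteq b$) are fine, but there is a real gap exactly where you flag the main obstacle. The assertion that ``a unit-speed curve with $|\gamma''|\leq\tau^{-1}$ confined to a box of side $\delta$ is too nearly straight to reach two different edges'' is false as stated: a straight segment crossing the box transversally meets two edges with zero curvature. Your curvature heuristic only bites when the arc is \emph{tangent} to the edges involved, or has both endpoints on the \emph{same} edge (forcing a definite turn); it says nothing about an arc entering one edge and exiting another transversally. Likewise your bottleneck argument for two distinct arcs breaks down precisely when the distance minimiser lies on $\partial b$, which you note but do not resolve. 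So the proposal, as written, does not close.

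The paper's proof is shorter and bypasses the arc decomposition entirely. It picks $x\in F\cap M$, $u\in F'\cap M$ and applies \cite[Theorem~2.2]{Aamari2019}, the chord--tangent bound $\tau\leq |u-x|\big/\bigl(2\,d((u-x)/|u-x|,T_xM)\bigr)$, twice. The first application gives $\angle(u-x,T_xM)\leq\arcsin(\sqrt2\,\delta/(2\tau))$. The second is where ``not selected'' is used more sharply than you do: since $f$ has the same sign at both endpoints of $F$, either $M$ meets $F$ at a second point $y\neq x$ (so $y-x$ is parallel to $F$ and the same bound yields $\angle(F,T_xM)\leq\arcsin(\delta/(2\tau))$) or $M$ is tangent to $F$ at $x$ (so $\angle(F,T_xM)=0$). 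One then checks, separately for opposite and adjacent faces, that $\angle(F,u-x)\geq\pi/4$; the triangle inequality for angles forces $\arcsin(\delta/(2\tau))+\arcsin(\sqrt2\,\delta/(2\tau))>\pi/4$, i.e.\ $\delta/\tau\gtrsim 0.63$, contradicting $\delta\leq\tau/2.37$. The ingredient you are missing is this second use of ``not selected'': it pins $T_xM$ nearly parallel to the edge $F$, which is what powers the angle contradiction.
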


\begin{proof}
    Assume there are two faces, $F,F'$, of $b$ which have a non-empty intersection with $M$.
    There are two possible cases, shown in Fig. \ref{fig:2cases}.

    \begin{figure}[htbp] \label{fig:2cases}
        \centering
        \begin{minipage}{0.45\textwidth}
        \centering
        \includegraphics[width=5cm]{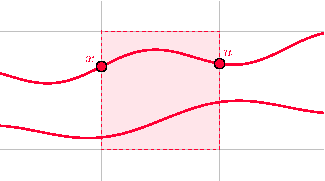}
        \end{minipage}
        \begin{minipage}{0.45\textwidth}
        \centering
        \includegraphics[width=5cm]{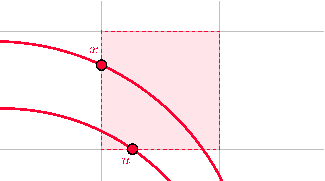}
        \end{minipage}
        
        \caption{Assume a box which contains at least a point of $M$ and it is not selected, has two faces with a non-empty intersection with $M$. There are two possible cases, which are treated separately in the proof of \cref{proposition:not-selected-box-has-at-most-one-nonempty-face}. Case 1, in which $M$ meets opposite faces of the box, is displayed on the left. Case 2, in which $M$ meets adjacent sides of the box, is displayed on the right.}
        \label{fig:case1-case2-face-intersections}
    \end{figure}

    \textbf{Case 1:
    $F$ and $F'$ are opposite faces of $b$.}

    Because $b$ is not selected, we must have that $M$ meets each face $F$ in an even number of points, or there exists a point $x \in F$ where $M$ touches $F$, i.e. $T_x M=F$.
    Let $x \in F \cap M$, and let $u \in F' \cap M$.
    We have
    \[
        d((u-x)/|u-x|, T_x M)
        =
        \sin (\angle (u-x, T_x M)),
    \]
    therefore, by \cite[Theorem 2.2]{Aamari2019} again,
    \begin{align*}
        \tau
        &\leq
        \frac{|u-x|}
        {2d((u-x)/|u-x|, T_x M)}
        \\
        &\leq
        \frac{\sqrt{2} \cdot \delta}{2\sin (\angle (u-x, T_x M))},
    \end{align*}
    so
    \begin{align}
        \label{equation:angle-smallness-condition}
        \begin{split}
        \angle (u-x, T_x M)
        &\leq
        \arcsin ((\sqrt{2}\delta)/(2\tau))
        \\
        \text{and analogously }
        \quad
        \angle (F, T_x M)
        &\leq
        \arcsin (\delta/(2\tau)).
        \end{split}
    \end{align}
    On the other hand, 
    \begin{align*}
        \angle (F,u-x) &> \pi/4,
    \end{align*}
    hence by the triangle inequality
    \begin{align*}
        \pi/4
        &<
        \angle (F,u-x)
        \\
        &\leq
        \angle (u-x, T_x M)+\angle (F, T_x M)
        \\
        &\leq
        \arcsin (\delta/2\tau)+\arcsin ((\sqrt{2}\delta)/(2\tau)),
    \end{align*}
    which implies $\frac{\delta}{\tau} \geq 0.63$, which again is a contradiction to our choice of $\delta$ from \cref{equation:delta-choice}.

    \textbf{Case 2:
    $F$ and $F'$ are neighbouring faces of $b$.}

    As before, let $x \in F$ and $u \in F'$.
    At least one of the following must be true:
    \begin{align*}
        \angle (u-x, F) \geq \pi/4
        \quad
        \text{ or }
        \quad
        \angle (x-u, F') \geq \pi/4.
    \end{align*}
    Without loss of generality assume the first case.
    As in \cref{equation:angle-smallness-condition}, we have that
    \begin{align*}
        \angle (u-x, T_x M) & \leq \arcsin ((\sqrt{2}\delta)/(2\tau)),
        \\
        \angle (F, T_x M) & \leq \arcsin (\delta/(2\tau)),
    \end{align*}
    hence
    \begin{align*}
        \pi/4
        &\leq
        \angle (u-x, F)
        \\
        &\leq
        \angle (u-x, T_x M) + \angle (F, T_x M)
        \\
        &\leq
        \arcsin ( \delta/(2 \tau))+\arcsin ((\sqrt{2}\delta)/(2\tau)),
    \end{align*}
    which is a contradiction for our choice of $\delta$ from \cref{equation:delta-choice} like in case 1.
\end{proof}

\begin{corollary}
    \label{corollary:box-unique-face}
    If a box $b \in \tilde{B}$ is not selected, then there exists a unique face $F$ such that $F \cap M \neq \emptyset$.
    Let $b'$ be the neighbouring box of $b$ with the property $b \cap b' = F$.
    Then $b'$ has been selected.
\end{corollary}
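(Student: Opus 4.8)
The plan is to prove the two assertions in turn, both via the mechanism underlying \cref{proposition:curve-with-distance-maximum-has-large-curvature}: a closed connected component of $M$ cannot be confined to a convex set whose circumradius is smaller than $\tau$, for otherwise the curvature of $M$ would be forced to exceed $1/\tau$, which contradicts \cref{theorem:reach-characterisation} (giving $\tau\le\rho$). The observation that links this to the corollary is that a box failing to be selected forces $M$ off all but at most one of its faces, so a pair of non-selected boxes sharing an edge traps a piece of $M$.

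For the first assertion, uniqueness of $F$ is exactly \cref{proposition:not-selected-box-has-at-most-one-nonempty-face}, so it remains to prove existence. I would argue by contradiction: if no face of $b$ meets $M$ then $M\cap\partial b=\emptyset$; since $b\in\tilde{B}$ there is a point of $M$ in $\Int(b)$, and the component $C$ of $M$ through it cannot cross the $M$-free boundary $\partial b$, hence $C\subseteq\Int(b)$. Parametrising $C$ at unit speed produces a geodesic of $M$ (every unit-speed curve in a $1$-manifold is one), so by \cref{proposition:curve-with-distance-maximum-has-large-curvature}, at a point $\gamma(t_0)$ of $C$ at maximal distance from the centre of the circumball of $b$ one has $\tfrac{1}{\xi}\le|\gamma''(t_0)|\le\rho(\gamma(t_0))^{-1}$, where $\xi=\tfrac{\sqrt2}{2}\delta$ is the circumradius of $b$. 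Thus $\tau\le\rho\le\xi=\tfrac{\sqrt2}{2}\delta$, which is incompatible with $\delta\le\tau/2.37$. So exactly one face $F$ of $b$ meets $M$.

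For the second assertion, suppose for contradiction that $b'$ is not selected. Since $F\subseteq b'$ and $F\cap M\neq\emptyset$ we have $b'\in\tilde{B}$, so \cref{proposition:not-selected-box-has-at-most-one-nonempty-face} applies to $b'$, and because $F$ is a face of $b'$ meeting $M$ it is \emph{the} unique such face. Moreover, by \cref{definition:selected-box} a vertex at which $f$ vanishes would make a box selected, so ``not selected'' forces $f\neq0$ at every vertex of $b$ and of $b'$; in particular the endpoints of $F$ are not on $M$, giving $M\cap F\subseteq\Int(F)$. Now set $R:=b\cup b'$, a $\delta\times2\delta$ rectangle, hence convex, with circumradius $\tfrac{\sqrt5}{2}\delta$. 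Its boundary $\partial R$ is covered by the three non-$F$ edges of $b$ and the three non-$F$ edges of $b'$, none of which meets $M$; together with $\Int(F)\subseteq\Int(R)$ this gives $\emptyset\neq M\cap F\subseteq M\cap R\subseteq\Int(R)$. As in the first part, any component of $M$ meeting $R$ must then lie in $\Int(R)$, hence is a closed curve inside a ball of radius $\tfrac{\sqrt5}{2}\delta$, and the curvature estimate yields $\tau\le\rho\le\tfrac{\sqrt5}{2}\delta$, which contradicts $\delta\le\tau/2.37$. Therefore $b'$ is selected.

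The conceptual core is light --- everything reduces to \cref{proposition:curve-with-distance-maximum-has-large-curvature} together with \cref{theorem:reach-characterisation} --- so I expect the main work to be bookkeeping rather than a real obstacle: verifying that a non-selected box has $f$ nonzero at all of its vertices and hence $M\cap F\subseteq\Int(F)$; checking that $\partial R$ really is the union of the six edges already known to avoid $M$; and justifying the reparametrisation that places the maximal-distance parameter in the interior of the domain when the curve is a loop. None of these appears difficult, so the proof is essentially a repackaging of results already established in the excerpt.
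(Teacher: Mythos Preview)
Your argument is correct. Both you and the paper reduce each assertion to the same structural observation---that a non-selected box (or pair of boxes) traps an entire component of $M$ in a small convex region---and then derive a contradiction with the reach bound $\delta\le\tau/2.37$. The difference lies only in how the contradiction is extracted. You invoke \cref{proposition:curve-with-distance-maximum-has-large-curvature} together with $\tau\le\rho$ from \cref{theorem:reach-characterisation}, bounding the curvature of the trapped loop by the reciprocal of the circumradius ($\tfrac{\sqrt2}{2}\delta$ for $b$, $\tfrac{\sqrt5}{2}\delta$ for $b\cup b'$). The paper instead uses the bottleneck half of \cref{theorem:reach-characterisation} for the first part, and for the second part picks a basepoint $x\in C\cap F$ and applies \cite[Theorem~2.2]{Aamari2019} at the point of $C$ farthest from $x$; because any point of $b\cup b'$ is within $\sqrt2\,\delta$ of $F$, this yields the sharper bound $\tau\le\tfrac{\sqrt2}{2}\delta$ rather than your $\tfrac{\sqrt5}{2}\delta$. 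The sharper constant is not needed under \cref{equation:delta-choice}, so your more uniform approach (one lemma for both parts) is a perfectly good alternative. Your bookkeeping remarks---that non-selection forces $f\neq0$ at every vertex so $M\cap F\subset\Int(F)$, that $\partial R$ is covered by the six non-$F$ edges, and that the periodicity of a closed $C$ lets one place the distance maximum in the open interval---are all straightforward and correctly identified.
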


\begin{proof}
    By \cref{proposition:not-selected-box-has-at-most-one-nonempty-face} there exists at most one face $F$ of $b$ such that $F \cap M \neq \emptyset$.
    There must also exist at least one, since otherwise there would be a connected component of $M$ contained in $b$.
    In this case, it would have a bottleneck of length at most $\sqrt{2} \delta$, which contradicts \cref{theorem:reach-characterisation} for our choice of $\delta$ from \cref{equation:delta-choice}.

    Thus, let $b'$ be the uniquely determined neighbouring box such that $b \cap b' \cap M \neq \emptyset$.
    If $b'$ is not selected, then by \cref{proposition:not-selected-box-has-at-most-one-nonempty-face}, the only face of $b'$ with non-empty intersection with $M$ is the one connected to $b$.
    Thus, $b \cup b'$ contains a connected component $C$ of $M$.
    Fix $x \in C \cap b \cap b'$, and let $y=\argmax _{z \in C}d(x,z)$.
    Then $x-y \perp T_y M$ and $d(x,y) \leq \sqrt{2} \delta$, so by \cite[Theorem 2.2]{Aamari2019} we have that $\tau \leq \frac{1}{2} \sqrt{2} \delta$, which is a contradiction to our choice of $\delta$ from \cref{equation:delta-choice}.
\end{proof}

\begin{corollary}
    \label{corollary:B-Btilde-deformation-retracts}
    The set $B$ is a deformation retract of $\tilde{B}$.
\end{corollary}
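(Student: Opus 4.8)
My plan is to realise $B$ as a deformation retract of $\tilde B$ by collapsing away the part of $\tilde B$ outside $B$. Both $B$ and $\tilde B$ are finite cubical complexes and $B$ is a subcomplex, and the cells of $\tilde B$ not contained in $B$ are exactly the open non-selected boxes together with those of their faces that lie in no selected box; so it is enough to show that $\tilde B$ collapses onto $B$ through a finite sequence of elementary collapses, each of which is in particular a deformation retraction.

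The key point is the following claim: \emph{every non-selected box $b \in \tilde B$ has an edge $G_b$ the opposite box of which (the box other than $b$ containing $G_b$) does not meet $M$}; such a $G_b$ is a free face of $\tilde B$, so $(b, G_b)$ may be collapsed. I would prove the claim by contradiction. If all four boxes sharing an edge with $b$ met $M$, then, because $\delta \leq \tau/2.37$ by \cref{equation:delta-choice}, we could pick a ball $D$ with $b$ and its four edge-neighbours inside $D$ and radius still less than $\tau$; by \cref{proposition:small-ball-has-connected-intersection-with-M} the set $M \cap D$ would then be a single connected arc $C$, which by \cref{corollary:box-unique-face} meets $b$ only along one face $F$. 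Since $C$ reaches all four sides of $b$ but cannot cross the interior of $b$, a strip-crossing argument shows that $C$ must contain a sub-arc running from ``above $b$'' around one side of $b$ to ``below $b$'', whose unit tangent therefore turns by at least $\pi$. But $M$ is a plane curve with reach $\tau$, so $\tau \leq \rho$ and $|\gamma''| \leq 1/\tau$ along $C$; an elementary estimate then gives that a unit-speed curve satisfying $|\gamma''| \leq 1/\tau$ whose tangent turns by $\pi$ has diameter at least $2\tau$. This contradicts $C \subseteq D$, since $D$ has diameter less than $2\tau$.

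Granting the claim, I would choose one such free edge $G_b$ for each non-selected box $b$ --- these edges are pairwise distinct and none is a face of another box of $\tilde B$ --- and perform the collapses $(b, G_b)$ in any order; each collapse leaves the remaining free edges free. Afterwards the cells of $\tilde B \setminus B$ that survive have dimension at most $1$, and I would remove them by further elementary collapses, using \cref{proposition:small-ball-has-connected-intersection-with-M} and the numerical slack in \cref{equation:delta-choice} once more to rule out $M$ encircling small clusters of boxes; when no cells outside $B$ remain, $B$ is exhibited as a deformation retract of $\tilde B$.

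The main obstacle I anticipate is the geometric claim in the second paragraph --- in particular making the strip-crossing and ``turning versus reach'' arguments rigorous and verifying that the constant $2.37$ supplies enough room --- together with the more routine but delicate bookkeeping in the third paragraph to be sure the collapse terminates at exactly $B$.
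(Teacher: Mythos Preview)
Your approach is genuinely different from the paper's and considerably more involved. The paper's argument is essentially a one-step deformation retraction built on the full strength of \cref{corollary:box-unique-face}: for each non-selected $b \in \tilde B$, that corollary gives not only the unique face $F$ with $F \cap M \neq \emptyset$, but crucially also that the neighbouring box $b'$ across $F$ is \emph{selected}. Hence $F \subset b' \subset B$, and one simply slides each such $b$ onto its face $F$ by the straight-line homotopy $H(t,x)=t\,q(x)+(1-t)x$, where $q$ is nearest-point projection onto $F$. You invoke \cref{corollary:box-unique-face} only for the uniqueness of $F$ and then search for a free face on the \emph{other} side of $b$ --- an edge whose neighbouring box lies outside $\tilde B$ altogether --- which is a different and harder geometric claim.

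The route you sketch for that claim has real gaps. The ``tangent turns by at least $\pi$'' assertion is not justified: the arc $C$ \emph{does} enter $b$ through $F$, so ``cannot cross the interior of $b$'' is false as stated, and the configurations in which $C$ visits all four edge-neighbours are more varied than a single strip-crossing picture covers. The subsequent claim that a unit-speed curve with $|\gamma''|\le 1/\tau$ whose tangent turns by $\pi$ has diameter at least $2\tau$ is not a standard fact --- curvature bounds give arc-length bounds, not directly diameter bounds --- and would itself need proof. Finally, even granting the free-edge claim, the ``further collapses'' step is underspecified: after removing the $2$-cells you are left with a $1$-complex attached to $B$, and you have not excluded cycles in that graph. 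The paper avoids all of this by retracting \emph{toward} the face $F$, which already lies in $B$, rather than collapsing away from it.
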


\begin{proof}
    First note that $B \subset \tilde{B}$, because if $f(v_1) \geq 0$ and $f(v_2) \leq 0$ for vertices $v_1,v_2$ in a box, then there must exist a point $p$ in this box where $f(p)=0$ by the mean value theorem.

    We now define a homotopy from $\tilde{B}$ onto $B$.
    To this end, define 
    \begin{align*}
        q: \tilde{B} & \rightarrow B
        \\
        b \ni x & \mapsto \argmin _{y \in F} d(x,y) \text{ with $F$ face of $b$ s.t. $F \cap M \neq \emptyset$},
    \end{align*}
    i.e. $q$ projects an unselected box onto the uniquely determined face (\cref{corollary:box-unique-face}) that has a non-empty intersection with $M$.
    Then let
    \begin{align*}
        H:[0,1] \times \tilde{B} &\rightarrow \tilde{B}
        \\
        (t, x) & \mapsto
        \begin{cases}
            x, &\text{ if } x \in B
            \\
            t \cdot q(x)+(1-t) \cdot x, &\text{ if } x \in \tilde{B} \setminus B.
        \end{cases}
    \end{align*}
    Then $H$ is continuous because $q$ is continuous, and $\Im H=B$, which proves the claim.
\end{proof}

\begin{corollary}
\label{corollary:boxes-and-M-have-same-homology-groups}
    We have that $\pi_1(M) \cong \pi_1(B) \cong \pi_1(\tilde{B})$ and $H^i(M;\mathbb{Z}) \cong H^i(B;\mathbb{Z}) \cong H^i(\tilde{B};\mathbb{Z})$ for $i\in \{0,1\}$.
\end{corollary}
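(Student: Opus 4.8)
The plan is to deduce this purely formally from the two structural results already established: Proposition \ref{proposition:M-B-tilde-deformation-retract}, which gives that $M$ is a deformation retract of $\tilde B$, and Corollary \ref{corollary:B-Btilde-deformation-retracts}, which gives that $B$ is a deformation retract of $\tilde B$. Since a deformation retraction is in particular a homotopy equivalence, both inclusions $M \hookrightarrow \tilde B$ and $B \hookrightarrow \tilde B$ are homotopy equivalences. Composing the first with a homotopy inverse of the second produces a homotopy equivalence $M \simeq B$; together with $M \simeq \tilde B$ this exhibits $M$, $B$ and $\tilde B$ as pairwise homotopy equivalent.

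The claimed isomorphisms then follow from the standard fact that a homotopy equivalence induces an isomorphism on all homotopy groups --- in particular a bijection of path components and, on each component, an isomorphism of fundamental groups, which is what the statement $\pi_1(M)\cong\pi_1(B)\cong\pi_1(\tilde B)$ abbreviates --- and an isomorphism on singular cohomology $H^i(-;\mathbb{Z})$ in every degree $i$. We record only the degrees $i\in\{0,1\}$ because $M$ is one-dimensional and these are the only ones of interest; no separate argument is needed for them.

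There is no real obstacle here: the geometric content --- controlling where nearest-point projections land, and ruling out stray connected components trapped inside a single box or a pair of boxes --- was already carried out in the proofs of Propositions \ref{proposition:M-B-tilde-deformation-retract} and \ref{proposition:not-selected-box-has-at-most-one-nonempty-face} and Corollaries \ref{corollary:box-unique-face} and \ref{corollary:B-Btilde-deformation-retracts}, using the curvature and bottleneck bounds encoded in the choice of $\delta$ in \cref{equation:delta-choice}. The only minor points to stay attentive to are that the maps constructed there are genuine deformation retractions (so that homotopy equivalence is automatic and need not be re-verified), and that the fundamental-group statement is read with the usual basepoint conventions on each path component.
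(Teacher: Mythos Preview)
Your proposal is correct and follows exactly the paper's own argument: the corollary is deduced immediately from \cref{proposition:M-B-tilde-deformation-retract} and \cref{corollary:B-Btilde-deformation-retracts} together with the standard fact that deformation retractions are homotopy equivalences and hence induce isomorphisms on fundamental groups and cohomology.
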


\begin{proof}
    Deformation retracts have the same fundamental group and cohomology groups, so this claim follows from \cref{proposition:M-B-tilde-deformation-retract,corollary:B-Btilde-deformation-retracts}.
\end{proof}

In order to make use of these results in applications, it remains to compute the homology group of the complex of selected boxes $B$.
An efficient algorithm for this problem was explained in \cite{Pilarczyk2015}, but we leave an implementation of our algorithm combined with a homology computation algorithm for future work.

\section{Application: lowest eigenvalue estimates for the Laplacian on algebraic varieties}
\label{section:application-lowest-eigenvalue-bound}

It is a common problem in analysis to prove injectivity estimates for linear differential operators, which is essentially equivalent to finding a lower bound for its first non-zero eigenvalue.
For example, explicit lower bounds for linear PDEs can be used in numerically verified proofs for the existence of solutions to PDEs, see e.g. \cite[Section 4]{Nakao2019}.

One of the most prominent linear differential operators is the Laplace operator, and there are many interesting problems in geometry whose linearisation is this operator.
If one were to carry out a numerically verified proof for such a problem, it would be one crucial step to obtain a lower bound for its first non-zero eigenvalue.

There is a vast amount of literature concerning lower bounds for the first non-zero eigenvalue of the Laplacian.
Many results have certain curvature requirements, and a review can be found in \cite{Ling2010}.
A milestone result that will be applied later in this section that does not make any assumptions about positivity or negativity of curvature is \cite{Li1980}.

There are applications of such estimates for submanifolds of $\R^N$:
specific work for submanifolds of the sphere (which are in particular submanifolds of $\R^N$) is \cite{Choi1983}, related to a conjecture of Yau.
An example for a submanifold of $\CP^N$ (which is $U(1)$-covered by a submanifold of $S^{2N+1} \subset \R^{2N}$) is \cite[Theorem 1.2 and Section 7, Example 2]{Li2012}.

One motivation for this article is to obtain lower bounds of the first eigenvalue of the Laplacian for Calabi-Yau varieties, in order to get some rigorous estimates for the difference between the true unknown Calabi-Yau metric and certain approximate Calabi-Yau metrics, see \cite[Section 3.1]{Douglas2024}.

The goal of this section is to give a lower bound of the first eigenvalue of the Laplacian on a submanifold $M \subset \R^N$, depending only on its reach, the ambient dimension $N$, and its diameter measured in the metric $| \cdot |_{\R^N}$ of the ambient space.
The reach can be estimated using the result from \cref{section:lower-bound-for-reach}.
The diameter in $\R^N$ is easily estimated from above, and we have already made use of this when we assumed that $M \subset [-M_1,M_1]^N$.

The eigenvalue estimate is obtained by bounding the diameter (measured in $d_M$) from above, bounding the Ricci tensor from below, and then applying \cite[Theorem 7]{Li1980} which immediately gives the eigenvalue bound.

We begin by recalling the following theorem giving a bound for the number of closed balls needed to cover a larger ball:

\begin{theorem}[p.163-164 in \cite{Rogers1963} and Theorem 1.1 \cite{VergerGaugry2005}]
\label{theorem:number-of-balls-needed-to-cover-big-ball}
    Let $T > \frac{1}{2}$ and $\nu_{T,N}$ be the minimal number of closed balls of radius $\frac{1}{2}$ which can cover a closed ball of radius $T$ in $\R^N$, $N \geq 2$.
    \begin{enumerate}
        \item 
        If $n \geq 3$, with $\vartheta_N:=N \log N+N \log ( \log N)+5N$, we have
        \begin{align}
            1
            <
            \nu_{T,N}
            \leq
            \begin{cases}
                e \vartheta_N(2T)^N & \text{ if } T \geq \frac{N}{2}
                \\
                N \vartheta_N(2T)^N & \text{ if } \frac{N}{2 \log N} \leq T < \frac{N}{2}.
            \end{cases}
        \end{align}

        \item 
        If $n \geq 9$ we have
        \begin{align}
            1
            <
            \nu_{T,N}
            \leq
            \frac{4e(2T)^N N \sqrt{N}}{\log N-2}
            \left(N \log N+N \log(\log N)+N \log (2T)+\frac{1}{2} \log(144 N) \right)
        \end{align}
        for all $1/2<T<\frac{N}{2 \log N}$.
    \end{enumerate}
\end{theorem}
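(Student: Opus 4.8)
The plan is to prove the strict lower bound $\nu_{T,N}>1$ and the upper bounds separately. The strict lower bound is immediate: a closed ball of radius $\tfrac12$ has diameter $1$, strictly smaller than the diameter $2T$ of $\overline{B(T)}$, so no single such ball can cover $\overline{B(T)}$; hence $\nu_{T,N}\ge 2>1$.

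For the upper bounds I would combine Rogers' theorem on economical periodic coverings of $\R^N$ with a first-moment (translation-averaging) argument. Rogers' theorem gives, for $N\ge 3$ and any convex body $K\subset\R^N$, a periodic covering of $\R^N$ by translates of $K$ of density at most $\vartheta_N = N\log N + N\log\log N + 5N$; for $K$ a Euclidean ball and $N$ large, sharper density bounds are available. Take $K=\overline{B(\tfrac12)}$ and let $\mathcal C$ be such a periodic covering of density $\theta$. Averaging over all translates $\mathcal C+x$, the expected number of members meeting $\overline{B(T)}$ equals $\theta\,\vol(\overline{B(T+1/2)})/\vol(\overline{B(1/2)}) = \theta\,(2T+1)^N$, since a member $\overline{B(c,\tfrac12)}$ meets $\overline{B(T)}$ exactly when $c$ lies in the $\tfrac12$-neighbourhood of $\overline{B(T)}$. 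Some translate of $\mathcal C$ therefore realises at most the average, and the members of that translated covering meeting $\overline{B(T)}$ still cover $\overline{B(T)}$, so $\nu_{T,N}\le\theta\,(2T+1)^N$.

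It then remains to absorb $(2T+1)^N=(2T)^N\bigl(1+\tfrac1{2T}\bigr)^N$. When $T\ge N/2$, $\bigl(1+\tfrac1{2T}\bigr)^N\le\bigl(1+\tfrac1N\bigr)^N<e$; when $T\ge N/(2\log N)$, $\bigl(1+\tfrac1{2T}\bigr)^N\le\bigl(1+\tfrac{\log N}N\bigr)^N\le e^{\log N}=N$. Inserting $\theta\le\vartheta_N$ gives the two cases of part (1). For part (2), one instead inserts the refined ball-covering density bound valid for $N\ge 9$ — of size of order $\tfrac{\sqrt N}{\log N-2}\bigl(N\log N+N\log(2T)+\cdots\bigr)$ — in place of $\vartheta_N$, yielding the stated inequality.

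The delicate point is the boundary layer. For $T$ close to $\tfrac12$, which is exactly the range of part (2), the factor $(2T+1)^N$ is not within a polynomial-in-$N$ multiple of $(2T)^N$, so the crude averaging argument must be refined: one covers the thin outer shell of $\overline{B(T)}$ by a separate, geometry-adapted family (or optimises the scale of the auxiliary space-filling covering) and then tracks every constant through the construction. This bookkeeping, together with the precise refined density bound for balls, is carried out in \cite{Rogers1963,VergerGaugry2005}; I would quote those results rather than reconstruct the estimates here.
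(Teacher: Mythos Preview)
The paper does not prove this theorem at all: it is stated with attribution to \cite{Rogers1963} and \cite{VergerGaugry2005} and then used as a black box in \cref{corollary:diameter-bound}. So there is no ``paper's own proof'' to compare against.

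That said, your sketch is a faithful outline of how the cited results are actually established, at least for part~(1). The averaging argument over translates of a Rogers covering gives $\nu_{T,N}\le\theta(2T+1)^N$, and your handling of the factor $(1+\tfrac{1}{2T})^N$ in the two regimes $T\ge N/2$ and $T\ge N/(2\log N)$ is correct and is exactly the mechanism that produces the $e$ and the $N$ in the two cases. For part~(2) you correctly identify the obstruction---when $T$ is close to $\tfrac12$ the naive $(2T+1)^N/(2T)^N$ blows up---and you are right to defer to \cite{VergerGaugry2005}, where the refined argument (a more careful covering adapted to small $T$, together with the sharper ball-covering density estimate) is carried out with all constants tracked. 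Since the paper itself only quotes the result, your proposal in fact goes further than the paper does; nothing more is required here than the citation.
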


This immediately gives a very naive bound for the diameter of $M$ as follows:
suppose we cover $M$ with small balls $B_1,\dots,B_n$ (small compared to the reach of $M$). In that case, we have an estimate for the diameter of $M \cap B_i$ for $i \in \{1,\dots,n\}$ with respect to $d_M$, using our comparison formula for extrinsic and intrinsic distances from \cref{application:extrinsic-instric-distance-estimate}.

Now, the longest a closed curve on $M$ can be is if it traverses each of the balls covering $M$, and because we can bound its length in each ball, this implies an overall diameter bound.
This is made rigorous in the following corollary:

\begin{corollary}
    \label{corollary:diameter-bound}
    Let $M=Z(f) \subset \mathbb{R}^N$ be connected and $\tau$ be the reach of $M$.
    Assume $M \subset B(K)$ for a ball with radius $K$ centered around some point in $\R^N$.
    Then
    \begin{align}
        \diam(M)
        :=
        \max_{x,y \in M} d_M(x,y)
        \leq
        2 (\nu_{K/\tau,N}+1)
        \cdot
        \tau.
    \end{align}
\end{corollary}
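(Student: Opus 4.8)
The plan is to realise $\diam(M)$ by a minimising geodesic and then bound how much arc-length of that geodesic can be charged to a single ball of radius $\tau/2$ from a fixed cover of the ambient ball. Since $M=Z(f)$ is closed and contained in $\overline{B(K)}$ it is compact, and it is connected by hypothesis, so Hopf--Rinow supplies $x,y\in M$ and a unit-speed minimising geodesic $\gamma:[0,D]\to M$ with $d_M(x,y)=D:=\diam(M)$. Assuming, as we may, that $K/\tau>1/2$ so that $\nu:=\nu_{K/\tau,N}$ is defined, rescaling the configuration of \cref{theorem:number-of-balls-needed-to-cover-big-ball} by the factor $\tau$ turns a cover of $\overline{B(K/\tau)}$ by $\nu$ balls of radius $\tfrac12$ into a cover $M\subseteq\overline{B(K)}\subseteq B_1\cup\dots\cup B_\nu$ by closed balls $B_i$ of radius $\tfrac\tau2$; in particular these balls cover the image of $\gamma$.

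The core of the argument is the estimate $\diam_{d_M}(M\cap B_i)\le 2\tau$ for every $i$. Given $p,q\in M\cap B_i$ we have $|p-q|_{\R^N}\le 2\cdot\tfrac\tau2=\tau$. Let $\sigma:[0,1]\to\R^N$, $\sigma(t)=(1-t)p+tq$, be the straight segment from $p$ to $q$; it lies in $B_i$, and since $p,q\in M$ every point $\sigma(t)$ is within $\min(t,1-t)\,|p-q|_{\R^N}\le\tfrac\tau2$ of $M$. Hence $\sigma$ stays inside the tubular neighbourhood $U^{\tau}_M$ on which the nearest-point projection $\pi_M$ is defined, and there (by the quantitative tubular neighbourhood theorem \cref{theorem:tubular-nbhd-theorem}, equivalently Federer's Lipschitz estimate for $\pi_M$) the map $\pi_M$ is Lipschitz with constant $\tfrac{\tau}{\tau-\tau/2}=2$. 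Then $\pi_M\circ\sigma$ is a path in $M$ from $\pi_M(p)=p$ to $\pi_M(q)=q$, so
\[
    d_M(p,q)\ \le\ \operatorname{length}(\pi_M\circ\sigma)\ \le\ 2\,|p-q|_{\R^N}\ \le\ 2\tau .
\]

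To finish, for each $i$ with $\gamma\cap B_i\neq\emptyset$ set $a_i:=\min\{t:\gamma(t)\in B_i\}$ and $z_i:=\max\{t:\gamma(t)\in B_i\}$. The sub-arc $\gamma|_{[a_i,z_i]}$ is again a minimising geodesic, so $z_i-a_i=d_M(\gamma(a_i),\gamma(z_i))\le\diam_{d_M}(M\cap B_i)\le 2\tau$. Because the $B_i$ cover $\gamma([0,D])$, every $t\in[0,D]$ lies in some $[a_i,z_i]$, so $[0,D]$ is covered by at most $\nu$ intervals of length $\le 2\tau$; hence $D\le 2\nu\tau\le 2(\nu+1)\tau$, which is the claim (the $+1$ is harmless slack).

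The main obstacle is the displayed bound $\diam_{d_M}(M\cap B_i)\le 2\tau$. A ball of radius $\tau/2$ contains pairs of points at extrinsic distance up to $\tau$, which is beyond the range of the comparison results \cref{corollary:intrinsic-extrinsic-sqrt-estimate,corollary:intrinsic-extrinsic-distance-estimate} (these require extrinsic distance $<\tau/2$), so one cannot simply quote them; the fix is to observe that the connecting segment stays within distance $\tau/2$ of $M$, which keeps one inside the injectivity radius of the normal exponential map and makes $\pi_M$ genuinely $2$-Lipschitz along it. Everything else---existence of the minimising geodesic, the rescaling of the cover, and the interval bookkeeping producing $2(\nu+1)\tau$---is routine.
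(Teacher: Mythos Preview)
Your argument is correct and diverges from the paper's in two places. For the key local estimate $d_M(p,q)\le 2\tau$ when $p,q\in M\cap B_i$, the paper inserts the ball centre $\mu_i$ and applies \cref{corollary:intrinsic-extrinsic-distance-estimate} to each half (each having extrinsic distance at most $\tau/2$); you instead project the chord onto $M$ via Federer's bound that $\pi_M$ is $\tau/(\tau-r)$-Lipschitz on $\{d(\cdot,M)\le r\}$ with $r=\tau/2$. Your route sidesteps the awkward fact that $\mu_i$ need not lie on $M$ (so the expression $d_M(p,\mu_i)$ in the paper's triangle inequality is not literally defined) and handles the full range $|p-q|_{\R^N}\le\tau$ in one shot. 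For the global step, the paper threads a minimal chain of overlapping balls from $x$ to $y$ and sums local bounds along intermediate points $y_i\in B_{k_i}\cap B_{k_{i+1}}$, whereas you cover the minimising geodesic directly and bound the arc-length it spends in each $B_i$; this is cleaner (again no need for the $y_i$ to lie on $M$) and actually yields $D\le 2\nu\tau$, so the $+1$ is indeed redundant slack. Both approaches rest on the same cover by $\nu$ balls of radius $\tau/2$ and on the same $2\tau$ local diameter bound; the substantive difference is which external input is invoked (Federer's Lipschitz estimate for $\pi_M$ versus the paper's \cref{corollary:intrinsic-extrinsic-distance-estimate}) and how the bookkeeping is organised.
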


\begin{proof}
    By \cref{theorem:number-of-balls-needed-to-cover-big-ball}, we have that $B(K)$ can be covered by $n:=\nu_{K/\tau,N}$ balls of radius $\tau/2$ that we write as $B_1,B_2,\dots,B_n$.
    We also denote the midpoint of $B_i$ as $\mu_i$ for $i \in \{1,\dots,n\}$.
    
    Let $x,y \in M$ such that $d_M(x,y)=\diam(M)$ and let $m \in \mathbb{Z}$ and $k_1,\dots,k_m$ be a sequence of indices with the following properties:
    \begin{itemize}
        \item 
        $x \in B_{k_1}$ and $y \in B_{k_m}$;

        \item 
        $B_{k_i} \cap B_{k_{i+1}} \neq \emptyset$ for $i \in \{1,\dots,m-1\}$;

        \item 
        the number $m$ is the minimal number with the property that such a sequence of indices exists.
    \end{itemize}
    Such a sequence exists because $M$ is connected.
    This then satisfies $m \leq n$.
    Choose some $y_i \in B_{k_i} \cap B_{k_{i+1}}$ for $i \in \{1,\dots,m-1\}$.
    For two points $p,q \in B_k$ for $k \in \{1,\dots,n\}$ we have
    \begin{align}
        \label{equation:distance-in-ball}
        d_M(p,q)
        &\leq
        d_M(p,\mu_k)+d_M(\mu_k,q)
        \leq
        2 |p-\mu_k|_{\R^N}+2 |\mu_k-q|_{\R^N}
        \leq
        4 \frac{\tau}{2}
        =
        2 \tau,
    \end{align}
    where we used \cref{corollary:intrinsic-extrinsic-distance-estimate} in the second step.
    Hence
    \begin{align*}
        d_M(x,y)
        &\leq
        d_M(x,y_1)+d_M(y_m,y)+
        \sum_{i=1}^{m-1}
        d_M(y_i,y_{i+1})
        \\
        &\leq
        (m+1) \cdot 2 \cdot \tau
        \\
        &\leq
        2(n+1)\tau,
    \end{align*}
    where we used \cref{equation:distance-in-ball} in the second step.
\end{proof}

We now give an upper bound for the intrinsic curvature of $M$.
It is a classical result in Riemannian geometry that this intrinsic curvature is determined by the extrinsic curvature (as measured by the second fundamental form $\operatorname{II}$ from \cref{definition:second-fundamental-form}) and the curvature of the ambient manifold, in our case $\R^N$ which has curvature zero:

\begin{theorem}[Gauss equation, p.100 in \cite{ONeill1983}]
    Let $M$ be a semi-Riemannian submanifold of $\overline{M}$, with $R$ and $\bar{R}$ their respective Riemannian curvature tensors and $\operatorname{II}$ the second fundamental form.
    Then for vector fields $V,W,X,Y$ all tangent to $M$,
    \begin{align}
        \label{equation:gauss-equation}
        \begin{split}
        \< R(V,W)X,Y \>
        &=
        \< \bar{R}(V,W)X,Y\>
        +
        \<\operatorname{II}(V,X),\operatorname{II}(W,Y)\>
        \\
        &\quad
        - \< \operatorname{II}(V,Y),\operatorname{II}(W,X) \>.
        \end{split}
    \end{align}
\end{theorem}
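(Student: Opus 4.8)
The plan is to derive the Gauss equation from the two companion structural equations for a submanifold: the Gauss formula and the Weingarten formula. Write $\bar\nabla$ for the Levi-Civita connection of $\overline{M}$ and, for $X,Y \in \mathfrak{X}(M)$, decompose $\bar\nabla_X Y$ into its tangential and normal parts along $M$. First I would verify that the tangential part $(X,Y) \mapsto \nabla_X Y$ defines a connection on $M$ that is torsion-free and compatible with the induced metric, hence coincides with the Levi-Civita connection of $M$; the normal part is by definition $\operatorname{II}(X,Y)$ (see \cref{definition:second-fundamental-form}), and its symmetry follows from torsion-freeness of $\bar\nabla$ together with $[X,Y]$ being tangent to $M$. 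This yields the Gauss formula $\bar\nabla_X Y = \nabla_X Y + \operatorname{II}(X,Y)$. Dually, for a normal field $\zeta$ I would split $\bar\nabla_X \zeta = -A_\zeta X + \nabla^\perp_X \zeta$ into tangential and normal parts; differentiating the identity $\<\zeta, Y\> = 0$ gives $\<A_\zeta X, Y\> = \<\operatorname{II}(X,Y),\zeta\>$, which is the only property of the shape operator $A_\zeta$ that the argument needs.

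Then I would compute directly. Fix $p \in M$ and extend $V, W, X, Y$ to vector fields tangent to $M$; using $\bar R(V,W)X = \bar\nabla_V\bar\nabla_W X - \bar\nabla_W\bar\nabla_V X - \bar\nabla_{[V,W]}X$, substitute the Gauss formula into each second derivative,
\begin{align*}
\bar\nabla_V \bar\nabla_W X
&= \bar\nabla_V\big(\nabla_W X + \operatorname{II}(W,X)\big)
\\
&= \nabla_V\nabla_W X + \operatorname{II}(V,\nabla_W X) + \bar\nabla_V\operatorname{II}(W,X),
\end{align*}
and rewrite the last term with the Weingarten formula, $\bar\nabla_V\operatorname{II}(W,X) = -A_{\operatorname{II}(W,X)}V + \nabla^\perp_V\operatorname{II}(W,X)$. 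Doing the same with $V$ and $W$ interchanged, and expanding $\bar\nabla_{[V,W]}X = \nabla_{[V,W]}X + \operatorname{II}([V,W],X)$, I then pair the whole expression with $Y$, which is tangent to $M$. Every purely normal contribution — the $\nabla^\perp$ terms and all the $\operatorname{II}(\cdot,\cdot)$ terms — vanishes against $Y$; the tangential curvature terms assemble into $\<R(V,W)X,Y\>$; and the shape-operator terms contribute, using $\<A_\zeta V, Y\> = \<\operatorname{II}(V,Y),\zeta\>$,
\begin{align*}
-\<A_{\operatorname{II}(W,X)}V, Y\> + \<A_{\operatorname{II}(V,X)}W, Y\>
&= -\<\operatorname{II}(V,Y),\operatorname{II}(W,X)\>
\\
&\quad + \<\operatorname{II}(W,Y),\operatorname{II}(V,X)\>.
\end{align*}
Rearranging gives \cref{equation:gauss-equation}.

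The computation is essentially bookkeeping once the two structural equations are in hand, so the only real content — and the step I would be most careful with — is the identification of the tangential part of $\bar\nabla$ with the intrinsic Levi-Civita connection of $M$ together with the symmetry of $\operatorname{II}$, since the entire conclusion (in particular that $\<\bar R(V,W)X,Y\>$ and $\<R(V,W)X,Y\>$ differ only by $\operatorname{II}$-terms) rests on it. A streamlined alternative I might present instead is to first observe that both sides of \cref{equation:gauss-equation} are tensorial in $V,W,X,Y$, then at a fixed $p$ choose the extensions so that $\nabla V = \nabla W = \nabla X = 0$ at $p$ (hence also $[V,W]_p = 0$); the terms of the form $\operatorname{II}(V,\nabla_W X)$ and $\operatorname{II}([V,W],X)$ then vanish at $p$ and the algebra collapses to exactly the shape-operator terms, making the identity visibly shorter to read off.
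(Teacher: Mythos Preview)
Your argument is correct and is exactly the standard derivation of the Gauss equation via the Gauss and Weingarten formulas; there is no gap. Note, however, that the paper does not supply its own proof of this statement: it is quoted as a known result from \cite[p.100]{ONeill1983} and used as a black box, so there is nothing in the paper to compare against beyond observing that your proof is the same one O'Neill gives.
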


\begin{corollary}
    Let $M \subset \R^N$ be an $n$-dimensional submanifold with reach $\tau$ and Riemannian curvature tensor $R$.
    Then for any unit vectors $e_1,e_2,e_3,e_4$ (not necessarily distinct or orthogonal) in $T_pM$,
    \begin{equation}
        |\<R(e_1,e_2)e_3,e_4\>|
        \leq
        \frac{9}{2} \tau^{-2}
    \end{equation}
    and in particular, 
    \begin{equation}
        |\Ric(e_1,e_2)|
        \leq
        \frac{9}{2}(n-1)\tau^{-2}.
    \end{equation}
\end{corollary}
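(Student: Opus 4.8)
The plan is to combine the Gauss equation \cref{equation:gauss-equation} with a pointwise bound on the second fundamental form extracted from the reach. Since $\R^N$ is flat we have $\bar R\equiv 0$, so \cref{equation:gauss-equation} reduces, for tangent vectors at a point $p\in M$, to
\begin{align*}
    \langle R(e_1,e_2)e_3,e_4\rangle
    =
    \langle\operatorname{II}(e_1,e_3),\operatorname{II}(e_2,e_4)\rangle
    -
    \langle\operatorname{II}(e_1,e_4),\operatorname{II}(e_2,e_3)\rangle .
\end{align*}
Hence the first inequality follows at once from the Cauchy--Schwarz inequality in the normal space $T_pM^\perp$, provided we know that $|\operatorname{II}(v,w)|_{\R^N}\le\tfrac32\,\tau^{-1}$ for all unit tangent vectors $v,w$: each of the two summands is then at most $\big(\tfrac32\tau^{-1}\big)^2=\tfrac94\tau^{-2}$.

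To obtain this bound on $\operatorname{II}$, I would first observe that $\tau=\min\{\eta,\rho\}\le\rho\le\rho(p)$ by \cref{theorem:reach-characterisation}, and that $\rho(p)^{-1}=\max_{|v|_{\R^N}=1}|\operatorname{II}(v,v)|_{\R^N}$ by definition; therefore $|\operatorname{II}(v,v)|_{\R^N}\le\tau^{-1}$ for every unit $v\in T_pM$, and $|\operatorname{II}(u,u)|_{\R^N}\le\tau^{-1}|u|_{\R^N}^2$ for all $u$ by homogeneity. The off-diagonal estimate then follows by polarisation: from $4\operatorname{II}(v,w)=\operatorname{II}(v+w,v+w)-\operatorname{II}(v-w,v-w)$ together with $|v+w|_{\R^N}^2+|v-w|_{\R^N}^2=4$ for unit $v,w$, one gets $|\operatorname{II}(v,w)|_{\R^N}\le\tau^{-1}\le\tfrac32\tau^{-1}$, which closes the argument for the curvature tensor.

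For the Ricci estimate, fix unit vectors $e_1,e_2\in T_pM$, complete $e_1$ to an orthonormal basis $b_1=e_1,b_2,\dots,b_n$ of $T_pM$, and compute the defining trace in this basis:
\begin{align*}
    \Ric(e_1,e_2)
    =
    \sum_{i=1}^{n}\langle R(b_i,e_1)e_2,b_i\rangle
    =
    \sum_{i=2}^{n}\langle R(b_i,e_1)e_2,b_i\rangle ,
\end{align*}
where the $i=1$ term vanishes since $R(e_1,e_1)=0$. Each of the remaining $n-1$ summands involves only unit vectors, so is bounded by $\tfrac92\tau^{-2}$ by the first part, yielding $|\Ric(e_1,e_2)|\le\tfrac92(n-1)\tau^{-2}$.

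I do not expect a genuine obstacle here: the only geometric input is the inequality $\tau\le\rho$ from \cref{theorem:reach-characterisation}, and everything else is elementary multilinear algebra. The points that need a little care are keeping the polarisation honest (noting that $\operatorname{II}$ is vector-valued, so one estimates $|\operatorname{II}(v-w,v-w)|_{\R^N}$ directly rather than invoking a numerical-radius identity for scalar forms) and choosing a basis adapted to $e_1$ so that the Ricci trace has $n-1$ rather than $n$ nonzero terms.
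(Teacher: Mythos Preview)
Your proof is correct and follows essentially the same route as the paper: Gauss equation with $\bar R\equiv 0$, a diagonal bound $|\operatorname{II}(v,v)|\le\tau^{-1}$ from \cref{theorem:reach-characterisation}, polarisation for the off-diagonal, and a basis adapted to $e_1$ for the Ricci trace. Your polarisation step is in fact tidier than the paper's---using the parallelogram identity $|v+w|^2+|v-w|^2=4$ you obtain the sharper bound $|\operatorname{II}(v,w)|\le\tau^{-1}$ (hence $|\langle R(e_1,e_2)e_3,e_4\rangle|\le 2\tau^{-2}$), which you then relax to $\tfrac32\tau^{-1}$ to match the stated constant, whereas the paper's polarisation lands directly on $\tfrac32\tau^{-1}$.
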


\begin{proof}
    For simpler notation we write $|\cdot|_{\R^N}=|\cdot |$.
    Using the polarization formula and \cref{theorem:reach-characterisation}, we have for unit length vectors $e_1, e_2$:
    \begin{align*}
        |\operatorname{II}(e_1, e_2)|
        &\leq
        \frac{1}{4}
        \left(
        |\operatorname{II}(e_1,e_1)|+
        |\operatorname{II}(e_2,e_2)|+
        |\operatorname{II}(e_1-e_2,e_1-e_2)|
        \right)
        \\
        &\leq
        \frac{1}{4}
        \left(
        \tau^{-1}
        +
        \tau^{-1}
        +
        |e_1-e_2|^2 \tau^{-1}
        \right)
        \\
        &\leq
        \frac{1}{4} \tau^{-1} (1+1+2^2)
        \\
        &\leq \frac{3}{2} \tau^{-1}.
    \end{align*}
    Write $\bar{R}$ for the Riemannian curvature tensor on $\R^N$.
    Because $\bar{R} \equiv 0$ we have by \cref{equation:gauss-equation} for unit vectors $e_1,e_2,e_3,e_4$:
    \begin{align*}
        |\<R(e_1,e_2)e_3,e_4\>|
        \leq
        2 \cdot \frac{9}{4} \cdot \tau^{-2}
        =
        \frac{9}{2} \tau^{-2}.
    \end{align*}
    Therefore
    \begin{align*}
        |\Ric(e_1,e_2)|
        \leq
        \sum_{i=1}^n
        |\<R(e_1,E_i)E_i,e_2\>|
        \leq
        \frac{9}{2}(n-1)\tau^{-2},
    \end{align*}
    where $E_1=e_1,E_2,E_3,\dots,E_n$ is an orthonormal basis of $T_p M$.
    In the last step, we obtain a factor of $(n-1)$ rather than $n$ because at least one of the summands is zero based on the symmetry properties of $R$.
    (If $e_1 \neq e_2$ for two orthonormal vectors $e_1,e_2$, then at least two summands are zero.)
\end{proof}

We now obtain a lower bound for the first eigenvalue of the Laplacian as follows:

\begin{theorem}[Theorem 7 in \cite{Li1980}]
    Let $M$ be a compact $n$-dimensional manifold with Ricci curvature of $M$ bounded below by $(n-1)\xi$ and diameter $d$.
    If $\lambda_1$ is the first non-zero eigenvalue of the Laplace operator acting on functions, then
    \begin{align}
        \label{equation:eigenvalue-estimate}
        \lambda_1
        \geq
        \frac{
            \exp \left( -(1+(1-4(n-1)^2d^2\xi)^{1/2}) \right)
        }
        {
            2(n-1)d^2
        }.
    \end{align}
\end{theorem}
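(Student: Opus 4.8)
This is a classical result of Li and Yau, so rather than arguing from scratch I would recall its proof, which proceeds by the gradient–estimate method. Let $u$ be a first eigenfunction, $\Delta u = -\lambda_1 u$, normalised so that $\sup_M|u| = 1$, and (replacing $u$ by $-u$ if necessary) so that $\sup_M u = 1$; since $\int_M u = 0$, the function $u$ also takes the value $0$ and attains a negative minimum $-k$ with $k \in (0,1]$. The plan has two halves. First, prove a pointwise gradient estimate of the shape $|\nabla u|^2 \le \Phi(u)$ for an explicit function $\Phi$ depending on $\lambda_1$, $n$, $\xi$ and one auxiliary parameter $a$; this is where the differential geometry enters. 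Second, integrate this estimate along a minimising geodesic joining a maximum point of $u$ to a point where $u$ vanishes --- a curve of length at most $d$ --- to convert it into an inequality relating $\lambda_1$, $d$ and $a$, and optimise over the admissible range of $a$ to obtain \eqref{equation:eigenvalue-estimate}.

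For the gradient estimate I would start from Bochner's formula applied to $u$:
\begin{equation*}
    \tfrac12 \Delta |\nabla u|^2 = |\Hess u|^2 - \lambda_1 |\nabla u|^2 + \Ric(\nabla u,\nabla u) \ge \tfrac{\lambda_1^2}{n}\,u^2 + \bigl((n-1)\xi - \lambda_1\bigr)\,|\nabla u|^2,
\end{equation*}
using $\Delta u = -\lambda_1 u$, the refined Cauchy-Schwarz inequality $|\Hess u|^2 \ge \tfrac1n(\Delta u)^2$, and the Ricci lower bound. One then forms an auxiliary quantity built from $|\nabla u|^2$ and a quadratic polynomial in $u$ divided by $(a - u)$ (equivalently, one works with $\log(a - u)$) for a constant $a > 1$ to be chosen, computes its Laplacian with the inequality above, and evaluates at an interior maximum, where the gradient vanishes and the Laplacian is $\le 0$. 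Vanishing of the gradient eliminates the mixed Hessian terms, and the remaining algebraic inequality --- after inserting $|u| \le 1$ --- bounds the auxiliary quantity, hence $|\nabla u|^2$, by a finite expression precisely when $a$ exceeds a threshold; that threshold is a root of a quadratic whose discriminant is $1 - 4(n-1)^2 d^2\xi$, which is the origin of the square root in the statement.

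For the integration step, I would choose $p$ with $u(p) = 1$ and $q$ with $u(q) = 0$, and let $\gamma:[0,\ell] \to M$ be a unit-speed minimising geodesic from $p$ to $q$, so $\ell \le d$. Along $\gamma$ one has $\bigl|\tfrac{d}{dt}u(\gamma(t))\bigr| \le |\nabla u|(\gamma(t)) \le \Phi(u(\gamma(t)))^{1/2}$, so separating variables gives $\int_0^1 \Phi(s)^{-1/2}\,ds \le \ell \le d$. The left-hand side evaluates, via the substitution already built into $\Phi$, to an explicit expression essentially logarithmic in $\lambda_1$; rearranging the resulting inequality and choosing $a$ as small as the admissibility threshold allows, namely $a = 1 + \sqrt{1 - 4(n-1)^2 d^2\xi}$, collapses everything to \eqref{equation:eigenvalue-estimate}.

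The main obstacle is technical rather than conceptual: the auxiliary quantity need not be twice differentiable at its maximum, because $\nabla u$ may vanish there, so the maximum-principle step must be justified (for instance by Calabi's trick, or by observing that the estimate forces the maximum to occur at a point with $\nabla u \neq 0$). The only other delicate point is the bookkeeping of constants, since the two contributions --- the explicit value of $\int_0^1 \Phi^{-1/2}$ and the quadratic admissibility condition on $a$ --- must be balanced so as to reproduce exactly the exponential in \eqref{equation:eigenvalue-estimate}; no genuine case analysis is needed, as the bound becomes vacuous when $\xi$ is so large that $1 - 4(n-1)^2 d^2\xi < 0$, where one would instead invoke the Lichnerowicz estimate.
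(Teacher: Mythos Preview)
The paper does not prove this statement at all: it is quoted verbatim as Theorem 7 of \cite{Li1980} and used as a black box to feed the Ricci lower bound and diameter bound into \cref{corollary:first-eigenvalue}. So there is no ``paper's own proof'' to compare against; your sketch is simply more than what the paper offers.

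That said, your outline is the correct one and matches Li's original argument: Bochner's formula on a first eigenfunction, a maximum-principle gradient estimate for an auxiliary quantity built from $|\nabla u|^2/(a-u)^2$ (equivalently, the gradient of $\log(a-u)$), and integration of $|\nabla u|/\Phi(u)^{1/2}$ along a minimising geodesic of length at most $d$. One small inaccuracy in your description: the admissibility threshold for $a$ that comes out of the maximum-principle step does not yet involve $d$; the diameter only enters through the integration step. The expression $1+\sqrt{1-4(n-1)^2d^2\xi}$ emerges only after combining the two steps and solving the resulting inequality for $\lambda_1$, not as a constraint on $a$ prior to integration. This is a bookkeeping point rather than a structural one, and you already flag the constant-tracking as the delicate part.
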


\begin{corollary}
    \label{corollary:first-eigenvalue}
    Let $M \subset \R^N$ be an $n$-dimensional submanifold with reach $\tau$ contained in a ball of radius $K$.
    Let
    \begin{align*}
        \xi &
        :=
        -\frac{9}{2}\tau^{-2}
        \text{ and }
        d
        :=
        2(\nu_{K/\tau,N}+1) \cdot \tau,
    \end{align*}
    where $\nu_{K/\tau,N}$ was defined in \cref{theorem:number-of-balls-needed-to-cover-big-ball}.

    If $\lambda_1$ is the first non-zero eigenvalue of the Laplace operator acting on functions, then a lower bound for $\lambda_1$ is given by the expression \cref{equation:eigenvalue-estimate} for these values of $\xi$ and $d$.    
\end{corollary}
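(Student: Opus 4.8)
The plan is to assemble the two estimates established earlier in this section and feed them into Li's theorem. First I would record that, since $M$ is a closed submanifold of $\R^N$ it is compact, and since $\lambda_1$ denotes the \emph{first non-zero} eigenvalue we may take $M$ connected (as in \cref{corollary:diameter-bound}; otherwise $0$ is an eigenvalue of higher multiplicity and there is nothing to bound). The Ricci corollary above then gives $\Ric(v,v) \ge -\tfrac{9}{2}(n-1)\tau^{-2}|v|^2 = (n-1)\xi\,|v|^2$ for all $v \in T_pM$, so the Ricci curvature of $M$ is bounded below by $(n-1)\xi$, while \cref{corollary:diameter-bound} gives $\diam(M) \le 2(\nu_{K/\tau,N}+1)\tau = d$ (here, as there, one implicitly assumes $K/\tau > 1/2$ so that $\nu_{K/\tau,N}$ is defined).

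Next I would invoke \cite[Theorem 7]{Li1980}, the theorem quoted immediately above. The one point needing a short justification is that this theorem is stated for a manifold whose normalized Ricci lower bound and diameter are exactly $(n-1)\xi$ and $d$, whereas here we only have a lower bound on the former and an upper bound on the latter; so one must check that the right-hand side of \cref{equation:eigenvalue-estimate} is monotone in the favourable direction. Since $\xi < 0$, the quantity $1 - 4(n-1)^2 d^2 \xi$ is $> 1 > 0$, so the square root in \cref{equation:eigenvalue-estimate} is real and the expression is well defined; moreover that quantity is increasing in $d$ and decreasing in $\xi$, so $\exp\bigl(-(1+(1-4(n-1)^2d^2\xi)^{1/2})\bigr)$ is decreasing in $d$ and increasing in $\xi$, while the denominator $2(n-1)d^2$ is increasing in $d$. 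Hence \cref{equation:eigenvalue-estimate} is a non-increasing function of $d$ and a non-decreasing function of $\xi$ on the relevant ranges: applying \cite[Theorem 7]{Li1980} with the true diameter and the lower Ricci bound $(n-1)\xi$, and then replacing the true diameter by the larger value $d$, only decreases the bound, so the resulting number remains a valid lower bound for $\lambda_1$.

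There is essentially no hard step: the corollary is a bookkeeping consequence of \cref{corollary:diameter-bound}, the Ricci estimate, and \cite[Theorem 7]{Li1980}. The only thing requiring any care is the elementary well-definedness and monotonicity check just described, which is what legitimizes plugging the bounds — rather than the (unknown) exact values — for $d$ and $\xi$ into \cref{equation:eigenvalue-estimate}.
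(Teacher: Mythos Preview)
Your proposal is correct and matches the paper's approach: the paper gives no explicit proof at all, treating the corollary as an immediate consequence of \cref{corollary:diameter-bound}, the Ricci estimate, and \cite[Theorem 7]{Li1980}. Your write-up is in fact more careful than the paper, since you spell out the monotonicity of \cref{equation:eigenvalue-estimate} in $d$ and $\xi$ that justifies replacing the true diameter and Ricci lower bound by the bounds $d$ and $(n-1)\xi$.
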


Unfortunately, the results of this section are of limited use in applications, as the following example shows:

\begin{example}
    We apply this to the example $f(x,y)=x^2+y^2-1$ and $M:= Z(f) \subset \R^2$, i.e. the circle in $\R^2$.
    It is known that the smallest non-zero eigenvalue on it is $1$, but the bound from \cref{corollary:first-eigenvalue} turns out to be much smaller than this.
    
    In \cref{example:circle} we obtained $|\nabla f|_1 \geq 0.3535$ on $M$ and $\tau \geq 0.0625$ in \cref{example:circle-reach}.
    In turn, using that $M \subset [-K,K]^3$ for $K=2$ this gives:
    \begin{align*}
        \nu_{K/\tau,N}
        =
        118614
    \end{align*}
    and using \cref{corollary:first-eigenvalue} we obtain
    \[
    \lambda_1
    \geq
    \frac{\exp ( -1.01 \cdot 10^6)}{4.40 \cdot 10^8},
    \]
    which is far smaller than the smallest positive float number.
\end{example}

The reason this estimate is so far from the true value is that our diameter estimate from \cref{corollary:diameter-bound} is so far from the true value.
It remains an interesting problem to compute rigorous upper bounds for the diameter of a given manifold that are sharper than the one we provide.

\section{Application: deforming algebraic varieties without changing their diffeomorphism type}
\label{section:application-deforming-algebraic-varieties}

In this section, we consider algebraic varieties of the form $Z(f) \subset \R^N$, where $f$ is a \emph{polynomial}.
Previously, we never used any particular property of polynomials, only that $f$ is smooth.
In what follows we will restrict to the case $f \in \R[x_1,\dots,x_N]$.

The condition for $M$ to be smooth is that $\nabla f \neq 0$ everywhere on $M$.
This is an open condition, so for any other polynomial $P \in \R[x_1,\dots,x_N]$, there exists $\epsilon_0 >0$ such for all $0 < \epsilon < \epsilon_0$ we have that $X_\epsilon := Z(f+\epsilon P)$ is still smooth.
In particular, $X$ and $X_\epsilon$ are diffeomorphic as a consequence of the Ehresmann lemma \cite[Lemma 9.2]{Kolar1993}.
In applications, it may be necessary to find a lower bound for $\epsilon_0$, see for example \cite[Section 3.2]{Douglas2024} where it was needed for a projective variety.

The following proposition provides a lower bound for $\epsilon_0$.
It is proved in an analogous way to the case of projective varieties in \cite[Proposition 3.12]{Douglas2024}.

\begin{proposition}
    Let $B \subset U \subset \R^N$ and $f,P \in \R[x_1,\dots,x_N]$.
    Define
    \[
    \delta
    :=
    \min_{x \in B}
    | \grad f (x) |_{\R^N}
    \quad
    \text{ and }
    \quad
    \xi
    :=
    \min_{x \in U \setminus B}
    |f(x)|
    \]
    and assume $\xi >0$.
    If $\|{P}_{C^0(U \setminus B)} \leq \xi$ and $\|{\grad P}_{C^0(B)} \leq \delta$, then
    $M_\epsilon := Z(f+\epsilon P) \cap U$ is smooth for all $\epsilon \in [0,1)$.
\end{proposition}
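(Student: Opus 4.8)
The plan is to show that for every $\epsilon \in [0,1)$ the number $0$ is a regular value of $f + \epsilon P$ on $U$, so that $M_\epsilon = Z(f+\epsilon P) \cap U$ is cut out transversally and is therefore a smooth hypersurface. Concretely, I would show: whenever $x \in U$ satisfies $(f + \epsilon P)(x) = 0$, then $\grad(f + \epsilon P)(x) \neq 0$. The natural way to organise this is to split $U$ into the two regions $U \setminus B$ and $B$ that appear in the definitions of $\xi$ and $\delta$, and to argue differently on each: on the first I control the \emph{value} of the perturbation, on the second its \emph{gradient}.

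First I would dispose of $U \setminus B$ by showing that $f + \epsilon P$ has no zeros there at all. For $x \in U \setminus B$, the reverse triangle inequality, the definition of $\xi$, the hypothesis $\|{P}_{C^0(U \setminus B)} \leq \xi$, and $\epsilon < 1$ give
\[
    \abs{(f + \epsilon P)(x)}
    \geq
    \abs{f(x)} - \epsilon \abs{P(x)}
    \geq
    \xi - \epsilon \xi
    =
    (1 - \epsilon)\xi
    >
    0 .
\]
Hence every zero of $f + \epsilon P$ in $U$ must lie in $B$.

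Next I would handle $B$, where the roles of value and gradient are interchanged. For $x \in B$ with $(f + \epsilon P)(x) = 0$, writing $\grad(f + \epsilon P) = \grad f + \epsilon \grad P$ and using the reverse triangle inequality, the definition of $\delta$, and the hypothesis $\|{\grad P}_{C^0(B)} \leq \delta$,
\[
    \abs{\grad(f + \epsilon P)(x)}_{\R^N}
    \geq
    \abs{\grad f(x)}_{\R^N} - \epsilon \abs{\grad P(x)}_{\R^N}
    \geq
    \delta - \epsilon \delta
    =
    (1 - \epsilon)\delta
    >
    0 ,
\]
so $x$ is a regular point. Combining the two regions, $0$ is a regular value of $f+\epsilon P$ on $U$, whence $M_\epsilon$ is smooth. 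This mirrors the argument for projective varieties in \cite[Proposition 3.12]{Douglas2024}.

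There is no serious obstacle here: the whole proof is two applications of the reverse triangle inequality, one bounding the value of the perturbation on $U \setminus B$ and one bounding its gradient on $B$. The only point that wants a word of care is the degenerate case $\delta = 0$, where the hypothesis $\|{\grad P}_{C^0(B)} \leq \delta$ forces $\grad P \equiv 0$ on $B$ and the final inequality $(1-\epsilon)\delta > 0$ fails; one should either add $\delta > 0$ to the standing hypotheses (in the spirit of the explicit assumption $\xi > 0$) or note that when $\delta = 0$ the set $Z(f) \cap B$ may already be singular. I would also remark that the bounds $(1-\epsilon)\xi$ and $(1-\epsilon)\delta$ are uniform as $\epsilon$ ranges over $[0,1)$, which is exactly what feeds into the Ehresmann-lemma argument recalled at the start of this section and lets one conclude that the $M_\epsilon$ are all diffeomorphic, not merely smooth.
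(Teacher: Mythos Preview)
Your proof is correct and follows essentially the same two-step approach as the paper: first showing via the reverse triangle inequality that any zero of $f+\epsilon P$ in $U$ must lie in $B$, then showing (again via the reverse triangle inequality) that the gradient is nonvanishing on $B$. Your additional remark on the degenerate case $\delta=0$ is a nice observation that the paper does not address.
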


\begin{proof}
    Let $x \in U \cap M_{\epsilon}$, we divide the proof in two steps,

    \textbf{Step 1: show that $x \in B$.}
    Assume $x \in U \setminus B$.
    Then
    \[
    |(f+\epsilon P)(x)|
    \geq
    |f(x)|-\epsilon |P(x)|
    \geq
    \xi-\epsilon \xi
    >0,
    \]
    i.e. $x \notin M_{\epsilon}$.

    \textbf{Step 2: show that $M_{\epsilon}$ is smooth at $x$.}
    By Step 1 we have that $x \in B$, therefore:
    \[
    |\grad(f+\epsilon P)(x)|_{\R^N}
    \geq
    |\grad f(x)|_{\R^N}
    -\epsilon |\grad P(x)|_{\R^N}
    \geq
    \delta-\epsilon \delta
    >0,
    \]
    i.e. $x$ is a smooth point of $M_\epsilon$.
\end{proof}

\begin{example}
    We revisit \cref{example:circle}.
    \Cref{algorithm:lower-bound-for-df} provides the set $B$, which is the union of CaseTwoBoxes.
    In the notation from \cref{algorithm:lower-bound-for-df}, we have $M_2=5.66$, $M_3=2$ and from the algorithm we obtain the smallest box length to be $\epsilon=0.0625$, so by \cref{proposition:df-algorithm-correctness}:
    \[
    \min_{x \in B}
    |\nabla f(x)|_2
    \geq
    \frac{1}{\sqrt{2}}
    \min_{x \in B}
    |\nabla f(x)|_1
    \geq
    \frac{1}{\sqrt{2}}
    M_3 N^{3/2} \frac{0.0625}{2}
    >
    0.12
    \]
    and
    \[
    \min_{x \in [-2,2]^2 \setminus B}
    |f(x)|
    \geq
    \sqrt{N} \frac{\epsilon}{2} M_2
    >
    0.25.
    \]
    Hence, for example for the polynomial $P \equiv 0.12$ we have that $M_\epsilon:=Z(f+\epsilon P)=Z(x^2+y^2-1+\epsilon)$ is smooth for $\epsilon \in [0,1)$.
    In fact, $M_\epsilon$ is even smooth for $\epsilon \in [0, 1/0.12)$, but our obtained lower bound is consistent with this.
\end{example}

\pagebreak

\appendix

\section{Manifolds $M=Z(f_1,\dots,f_k)$ defined by multiple functions}
\label{section:generalise-to-f1-...-fk}

In \cref{section:lower-bound-for-df} it was explained how to compute a lower bound for $|\nabla f|_1$ on a variety $M=Z(f) \subset \R^N$.
In this section, we present an analogue result for varieties defined by more than one function, say $M=Z(f_1,\dots,f_k)$.

In this case, a point $x \in M$ is singular, if the linear map $(\nabla f_1, \dots, \nabla f_k)$ does not have full rank.
For $k=1$, this reduces to the condition $\nabla f_1(x)=0$.

This makes the situation much more complicated:
our goal is still to compute some data for $|\nabla f_i|_1$ and deduce a lower bound for the reach from it.
However, lower bounds for $|\nabla f_i|_1$ will not be enough:
it may happen that all $|\nabla f_i|_1$ are large, but $|\nabla f_i|_1=|\nabla f_j|_1$ for some $i \neq j$.
In this case, $M$ would be singular, which heuristically means that its reach is $\tau=0$.
More rigorously, one may find that the $|\nabla f_i|_1$ are large, but $|\nabla f_i|_1$ and $|\nabla f_j|_1$ are very close, which would make the reach $\tau$ very small.

In this more complicated situation, the interesting quantity is $\det g$, where the entries of the matrix $g \in
C^\infty(\R^N, \R^{k \times k})$ are
\begin{align}
\label{equation:def-of-matrix-g}
(g_{ij}) = \< \nabla f_i, \nabla f_j \> .
\end{align}

In \cref{subsection:computing-a-lower-bound-for-det-g}, we will explain an algorithm to compute a lower bound for $\det g$.
This is \cref{algorithm:lower-bound-for-det}, which is the analogue of \cref{algorithm:lower-bound-for-df} from \cref{section:lower-bound-for-df}.
Next, we will use this lower bound to compute a lower bound for the reach of $M$.
Mirroring \cref{section:lower-bound-for-reach}, we will do this in three parts:
first, we calculate an upper bound for the curvature in \cref{subsection:use-det-g-for-upper-bound-of-curvature}.
Second, computing a lower bound for the smallest bottleneck in \cref{subsection:det-g-for-smallest-bottleneck}.
Third, taking both together, we get a lower bound for the reach as an immediate corollary, which is stated in \cref{subsection:det-g-for-reach}.

\subsection{A lower bound for $\det g$ on $Z(f_1,\dots,f_k)$}
\label{subsection:computing-a-lower-bound-for-det-g}

We now generalise \cref{algorithm:lower-bound-for-df} as follows:

\begin{algorithm}
    \caption{Finding a lower bound for $\det g$, where $g$ was defined in \cref{equation:def-of-matrix-g}, on $Z(f_1,\dots,f_k)$. The function subdivide is as in \cref{algorithm:lower-bound-for-df}.}
    \label{algorithm:lower-bound-for-det}
    \begin{algorithmic}[1]
    \Require{$M_1,M_2,M_3$ such that $Z(f_1,\dots,f_k) \subset [-M_1,M_1]^N$ and for all $i \in \{1,\dots,k\}$ we have $|\nabla f_i|_{\R^N} \leq M_2$ and $|\Hess f_i|_2 \leq M_3$ on $[-M_1,M_1]^N$}
    \Statex
    \Let{NewBoxes}{$\{[-M_1,M_1]^N\}$}
    \Let{CaseOneBoxes}{$\emptyset$}
    \Let{CaseTwoBoxes}{$\emptyset$}

    \While{NewBoxes $\neq \emptyset$}
    \Let{CurrentBox}{pop(NewBoxes)}
    \Let{$m$}{midpoint(CurrentBox)}
    \Let{$\varepsilon$}{sidelength(CurrentBox)}
    \If{$\max_{j \in \{1,\dots,k\}} |f_j(m)| > \sqrt{N} \epsilon M_2$}
        \Let{CaseOneBoxes}{CaseOneBoxes $\cup \{\text{CurrentBox} \}$ }
    \ElsIf{$|\det g(m)|> 2N \epsilon \cdot k! \cdot M_2^{2k-1} M_3$}
        \Let{CaseTwoBoxes}{CaseTwoBoxes $\cup \{\text{CurrentBox} \}$ }
    \Else
        \Let{NewBoxes}{NewBoxes $\cup \text{subdivide}(\text{CurrentBox})$}
    \EndIf
  \EndWhile
  \end{algorithmic}
\end{algorithm}

\begin{proposition}
    If $Z(f_1,\dots,f_k)$ is non-singular, then \cref{algorithm:lower-bound-for-det} terminates after finitely many steps, and yields a set $B \subset \R^N$ with $Z(f_1,\dots,f_k) \subset B$ such that
    \begin{align}
        \left|
        \det g |_B
        \right|
        >
        N \epsilon \cdot k! \cdot M_2^{2k-1} M_3
    \end{align}
    as well as
    \begin{align}
        \max_{j \in \{1,\dots,k\}}
        \left|
        f_j|_{[-M_1,M_1]^N \setminus B}
        \right|
        >
        \sqrt{N} \frac{\epsilon}{2} M_2,
    \end{align}
    where $\epsilon$ is the side length of the smallest box in CaseOneBoxes$\,\cup\,$CaseTwoBoxes.
\end{proposition}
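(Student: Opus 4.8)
The plan is to imitate the proof of \cref{proposition:df-algorithm-correctness} almost line for line, with $\det g$ in the role of $|\nabla f|_1$ and the tuple of gradients $(\nabla f_1,\dots,\nabla f_k)$ in the role of the single gradient $\nabla f$; the only genuinely new ingredient is a Lipschitz-type estimate for $\det g$ over a box, which replaces the row-wise Hessian estimate \cref{equation:which-Hess-norm-used}. For termination I would argue by contradiction: if the algorithm never terminates there is a nested sequence $b_0=[-M_1,M_1]^N\supset b_1\supset b_2\supset\cdots$ with $b_{i+1}$ of half the side length of $b_i$, none of which is placed in CaseOneBoxes or CaseTwoBoxes, and the midpoints $m_i$ converge to a common point $p^{\ast}$. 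Since $b_i$ fails the first branch, $\max_j|f_j(m_i)|\le\sqrt N\,\epsilon_i M_2\to 0$, so $f_j(p^{\ast})=0$ for all $j$ by continuity; since $b_i$ fails the second branch, $|\det g(m_i)|\le 2N\epsilon_i\,k!\,M_2^{2k-1}M_3\to 0$, so $\det g(p^{\ast})=0$. Writing $A(x)$ for the $N\times k$ matrix with columns $\nabla f_1(x),\dots,\nabla f_k(x)$, one has $g=A^{\top}A$, and $\det g(p^{\ast})=0$ says exactly that $A(p^{\ast})$ does not have full column rank; thus $p^{\ast}\in Z(f_1,\dots,f_k)$ is a singular point, contradicting the hypothesis.

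For the two displayed inequalities, observe that at termination the boxes in CaseOneBoxes together with those in CaseTwoBoxes tile $[-M_1,M_1]^N$, and set $B:=\bigcup_{b\in\text{CaseTwoBoxes}}b$. On a CaseOneBox with midpoint $m$ and side length $\epsilon_b$ there is an index $j_0$ with $|f_{j_0}(m)|>\sqrt N\,\epsilon_b M_2$, and the mean value estimate of \cref{equation:algo-f-nonzero-argument} --- using $|\nabla f_{j_0}|_{\R^N}\le M_2$ and that every point of the box is within $\sqrt N\,\epsilon_b/2$ of $m$ --- gives $|f_{j_0}(x)|>\sqrt N\,\epsilon_b M_2/2>0$ for all $x$ in that box. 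Hence no CaseOneBox meets $Z(f_1,\dots,f_k)$, so $Z(f_1,\dots,f_k)\subset B$; and since every point of $[-M_1,M_1]^N\setminus B$ lies in some CaseOneBox, the same estimate (now with $\epsilon_b\ge\epsilon$) yields $\max_j|f_j|>\sqrt N\,\tfrac{\epsilon}{2}\,M_2$ there, which is the second inequality.

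It remains to bound $\det g$ on $B$. I would first record the pointwise estimates $|g_{ij}|=|\langle\nabla f_i,\nabla f_j\rangle|\le M_2^2$ and $\nabla g_{ij}=\Hess(f_i)\nabla f_j+\Hess(f_j)\nabla f_i$, whence $|\nabla g_{ij}|_{\R^N}\le 2M_2M_3$; differentiating the Leibniz expansion $\det g=\sum_{\sigma\in S_k}\sign(\sigma)\prod_{i=1}^{k}g_{i\sigma(i)}$ then gives a bound $|\nabla\det g|_{\R^N}\le C\,M_2^{2k-1}M_3$ for an explicit combinatorial constant $C$. A mean value estimate along the segment from the midpoint $m$ of a CaseTwoBox to any point $x$ of it yields $|\det g(x)|\ge|\det g(m)|-\tfrac{C\sqrt N}{2}\,\epsilon_b\,M_2^{2k-1}M_3$, and since $|\det g(m)|>2N\epsilon_b\,k!\,M_2^{2k-1}M_3$ on a CaseTwoBox and $Z(f_1,\dots,f_k)$ is covered by CaseTwoBoxes of side length $\ge\epsilon$, one concludes $|\det g|_B|>N\epsilon\,k!\,M_2^{2k-1}M_3$ --- provided $C\le 2\sqrt N\,k!$. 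I expect this constant to be the main obstacle: the naive Leibniz bound carries an extra factor of $k$ coming from the $k$ choices of which factor $g_{i\sigma(i)}$ to differentiate, so to land exactly on the constant $2N$ used in the ``else if'' test of \cref{algorithm:lower-bound-for-det} one has to be a little careful --- for instance by regrouping the differentiated terms into cofactors of the Gram matrix $g$, or by using $k\le N$. Everything else is routine and parallels \cref{proposition:df-algorithm-correctness}.
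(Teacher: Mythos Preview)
Your plan matches the paper's proof almost step for step: the same termination-by-contradiction argument, the CaseOneBox estimate via \cref{equation:algo-f-nonzero-argument} applied to the index $j_0$ realising the maximum, and the CaseTwoBox estimate by a mean-value inequality for $\det g$ using a Lipschitz bound coming from the Leibniz expansion. Your pointwise bound $|\nabla g_{ij}|_{\R^N}\le 2M_2M_3$, obtained directly from $\nabla g_{ij}=\Hess(f_i)\nabla f_j+\Hess(f_j)\nabla f_i$, is in fact sharper than the paper's $|\d g_{ij}|_{\R^N}\le 2\sqrt N\,M_2M_3$, which passes unnecessarily through the $\infty$-norm.

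Your concern about the combinatorial constant is justified, and the paper does not actually resolve it: in its displayed bound on $|\d(\det g)|_{\R^N}$ the inner sum $\sum_{i=1}^k$ produced by the product rule is omitted from the notation and the resulting factor $k$ is silently dropped, which is exactly how the paper lands on $k!\cdot 2\sqrt N\,M_2^{2k-1}M_3$ and hence on the stated inequality. With an honest Leibniz count one picks up an extra factor $k$, and neither the paper's estimate nor your sharper one for $\nabla g_{ij}$ recovers the claimed constant in general (your route would need $k\le\sqrt N$; the paper's would need $k\le 1$). So your approach is the paper's approach, and the gap you anticipate is present in the paper's own argument as written; your suggested fixes (cofactor regrouping, or imposing $k\le\sqrt N$) go beyond what the paper provides.
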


\begin{proof}
    Let $b \in \text{CaseOneBoxes}$ with side length $\epsilon$ and midpoint $m \in b$.
    Let $x \in b$ and $i \in \{1,\dots,k\}$ such that $|f_i(m)| = \max_{j \in \{1,\dots,k\}} |f_j(m)|$.
    We then see that $|f_i(x)|>0$ as in \cref{equation:algo-f-nonzero-argument}.

    Let $b \in \text{CaseTwoBoxes}$ with side length $\epsilon$ and midpoint $m \in b$.
    We aim to show that 
    \begin{align*}
        |\det g(x)|
        \geq
        N \epsilon k! M_2^{2k-1}M_3.
    \end{align*}
    As a shorthand, we write $u_i=\nabla f_i$.
    First note that for all $i,j\in \{1,\dots,k\}$ we have
    \begin{align}
        \label{equation:gij-estimates}
        \begin{split}
        |g_{ij}|
        &=
        \left| \< u_i, u_j \> \right|
        \leq
        |u_i|_{\R^N} \cdot |u_j|_{\R^N}
        \leq
        M_2^2,
        \\
        |\d g_{ij} |_{\R^N}
        &\leq
        \sqrt{N}
        |\d g_{ij} |_{\infty}
        =
        \sqrt{N}
        \left| \d g_{ij}(e_s) \right|
        =
        \sqrt{N}
        \left|
        \< \nabla_{e_s} u_i, u_j \rangle
        +
        \< u_i, \nabla_{e_s} u_j \rangle
        \right|
        \leq
        2 \sqrt{N} M_2 M_3
        \text{ for some }
        s \in \{1,\dots,N\},
        \end{split}
    \end{align}
    where $e_s=(0,\dots,0,1,0,\dots,0)$ denotes the $s$-th canonical basis vector of $\R^N$.
    This gives
    \begin{align*}
        \begin{split}
            |\d {} (\det g)|_{\R^N}
            &=
            \left| \d \left(
            \sum_{\sigma \in S_k} \sign(\sigma) g_{1 \sigma(1)} \dots  g_{k \sigma(k)}
            \right) \right|_{\R^N}
            \\
            &=
            \left| \left(
            \sum_{\sigma \in S_k} 
            \sign(\sigma) g_{1 \sigma(1)} \dots \widehat{g_{i\sigma(i)}} \dots g_{k \sigma(k)} \d {} (g_{i\sigma(i)})
            \right) \right|_{\R^N}
            \\
            &\leq
            k! \cdot 2 \cdot \sqrt{N} M_2^{2k-1} M_3,
        \end{split}
    \end{align*}
    where we used \cref{equation:gij-estimates} in the last step.
    Thus
    \begin{align*}
        | \det g(x)|
        &\geq
        | \det g(m)|-|m-x|_{\R^N} \cdot \max_{y \in [-M_1,M_1]^N} 
        \left| \d_y (\det g) \right|_{\R^N}
        \\
        &\geq
        | \det g(m)|-\sqrt{N} \frac{\epsilon}{2} \cdot k! \cdot 2 \cdot \sqrt{N} M_2^{2k-1} M_3
        \\
        &\geq
        N \epsilon \cdot k! \cdot M_2^{2k-1} M_3,
    \end{align*}
    where in the first step, we used the mean value theorem as in \cref{equation:algo-f-nonzero-argument}.

    Finally, the algorithm terminates after finitely many steps: Assume the contrary, then as with \cref{algorithm:lower-bound-for-df}, there exists an infinite sequence of closed boxes $\{\tilde{b}_i\}$ with monotone decreasing sides such that $p^* \in \cap^\infty_{i=1} \tilde{b}_i$. Then by line 8 in \cref{algorithm:lower-bound-for-det},
    \begin{equation*}
    |f_j(p^*)|=0 
    \end{equation*}
    for every $j\in \{1,\cdots,k\}$ and hence $p^*\in M$. On the other hand,
    \begin{equation*}
       |\det g (p^*)| = 0,
    \end{equation*}
And therefore $(\nabla f_1, \cdots, \nabla f_k)$ does not have full rank at $p^*$; hence, it is a singular point, contradicting the assumption that $M$ is smooth.
\end{proof}

\subsection{\texorpdfstring{Using $\det g$ for an upper bound of the curvature of $M$}{Using $\det g$ for an upper bound of the curvature of $M$}}
\label{subsection:use-det-g-for-upper-bound-of-curvature}

Armed with a lower bound for $\det g$, we will now mimic \cref{subsection-upper-bound-for-curvature-from-df} to compute an upper bound for the curvature of $M$.
We begin by stating the following easy linear algebra lemma:

\begin{lemma}
    \label{lemma:ONB-construction}
    Let $u_1, \dots, u_N \in \R^N$ be linearly independent.
    Define
    \begin{align*}
        (g_{ij})
        =
        \langle u_i, u_j \rangle
    \end{align*}
    and 
    \begin{align}
        \label{equation:v_i-ONB}
        v_i
        :=
        \sum_{k=1}^N
        \beta_{ik} u_k
        \quad
        \text{ with }
        \quad
        \beta_{ik}
        =
        \left(
        \sqrt{g^{-1}}
        \right)_{ik}.
    \end{align}
    Then $\langle v_i, v_j \rangle=\delta_{ij}$, i.e., $v_1,\dots,v_N$ is an orthonormal basis for $\R^N$.
\end{lemma}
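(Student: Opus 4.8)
The plan is to recast the claim as a single matrix identity and then reduce it to the statement that $\sqrt{g^{-1}}$ commutes with $g$. First I would observe that, since $u_1,\dots,u_N$ are linearly independent, the Gram matrix $g=(g_{ij})$ is symmetric and positive definite: for $c\in\R^N$ we have $c^\top g c = |\sum_i c_i u_i|_{\R^N}^2 > 0$ unless $c=0$. Consequently $g$ is invertible and $g^{-1}$ is again symmetric positive definite, so by the spectral theorem it admits a unique symmetric positive-definite square root $\sqrt{g^{-1}}$, which by construction is a polynomial in $g^{-1}$ (hence in $g$) and therefore commutes with $g$. In particular the matrix $\beta=(\beta_{ik})=\sqrt{g^{-1}}$ is symmetric.

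Next I would compute the inner products $\langle v_i,v_j\rangle$ directly from \cref{equation:v_i-ONB}:
\begin{align*}
    \langle v_i, v_j \rangle
    =
    \sum_{k,l=1}^N \beta_{ik}\beta_{jl}\langle u_k,u_l\rangle
    =
    \sum_{k,l=1}^N \beta_{ik}\, g_{kl}\, \beta_{jl}
    =
    \left( \beta\, g\, \beta^\top \right)_{ij}.
\end{align*}
Using that $\beta$ is symmetric, this equals $(\beta g \beta)_{ij} = \bigl(\sqrt{g^{-1}}\, g\, \sqrt{g^{-1}}\bigr)_{ij}$. Since $\sqrt{g^{-1}}$ commutes with $g$, we may rearrange $\sqrt{g^{-1}}\, g\, \sqrt{g^{-1}} = \sqrt{g^{-1}}\sqrt{g^{-1}}\, g = g^{-1} g = \Id$, whence $\langle v_i,v_j\rangle=\delta_{ij}$. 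Finally, $N$ pairwise-orthonormal vectors in $\R^N$ form a basis, giving the last assertion.

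I do not expect a genuine obstacle here; the only point requiring a little care is the existence, symmetry, and commutation property of the square root $\sqrt{g^{-1}}$, which all follow from the spectral theorem applied to the symmetric positive-definite matrix $g^{-1}$. One could alternatively avoid mentioning square roots altogether and argue abstractly: the Gram matrix of $(v_i)$ is $\beta g \beta^\top$, and choosing $\beta$ to be \emph{any} matrix with $\beta g \beta^\top = \Id$ works; the particular symmetric choice $\beta=\sqrt{g^{-1}}$ is just one convenient representative. I would keep the symmetric-square-root formulation to match \cref{equation:v_i-ONB} as stated.
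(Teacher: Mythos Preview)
Your proposal is correct and follows essentially the same approach as the paper: both note that $g$ is positive definite so that $\sqrt{g^{-1}}$ exists, expand $\langle v_i,v_j\rangle=\sum_{k,l}\beta_{ik}\beta_{jl}g_{kl}$, and identify this with $(\sqrt{g^{-1}}\,g\,\sqrt{g^{-1}})_{ij}=\delta_{ij}$. Your write-up is simply more explicit about the symmetry of $\beta$ and the commutation step that justify the final simplification.
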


\begin{proof}
    The matrix $g$ is the matrix representation of the Euclidean metric on the basis $\{u_i\}$, hence $g$ (and its inverse) is positive definite and has a unique positive definite square root.
    We have
    \[
        \langle v_i, v_j \rangle
        =
        \sum_{k,l=1}^N
        \beta_{ik} \beta_{jl} g_{kl}
        =
        \left(
        \sqrt{g^{-1}}
        g
        \sqrt{g^{-1}}
        \right)_{ij}
        =\delta_{ij}.\qedhere
    \]
\end{proof}

We now use this result to prove another lemma in linear algebra, namely bounding the norm of the inverse of a matrix by the reciprocal of its determinant, roughly speaking:

\begin{lemma}
    \label{lemma:sqrt-inverse-g-element-estimate}
    For any positive definite matrix $g \in \R^{N \times N}$ and for all $1 \leq i,k \leq N$ we have:
    \begin{align*}
        \left|\left(
        \sqrt{g^{-1}}
        \right)_{ik}\right|
        \leq
        N^{2+(N-1)/4} \frac{||g||_1^{(N-1)/2}}{|\det g|^{1/2}}.
    \end{align*}
\end{lemma}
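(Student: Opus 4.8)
The plan is to bound each entry of $\sqrt{g^{-1}}$ in three steps: first express $g^{-1}$ in terms of the cofactor matrix and $\det g$, then control the square root of a positive definite matrix in terms of the matrix itself, and finally assemble the pieces with crude norm inequalities, keeping track of how factors of $N$ accumulate.

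\textbf{Step 1: bound the entries of $g^{-1}$.} Since $g$ is positive definite, it is invertible, and Cramer's rule gives $(g^{-1})_{ij} = (-1)^{i+j} \det(g_{\hat{j}\hat{i}}) / \det g$, where $g_{\hat{j}\hat{i}}$ is the $(N-1)\times(N-1)$ minor obtained by deleting row $j$ and column $i$. Each such minor is a sum of $(N-1)!$ products of $N-1$ entries of $g$, and each entry is at most $\|g\|_1$ in absolute value (here $\|\cdot\|_1$ denotes the max column sum, or whichever matrix $1$-norm the paper uses; any submultiplicative norm that dominates the entrywise sup-norm will do). So $|(g^{-1})_{ij}| \leq (N-1)! \, \|g\|_1^{N-1} / |\det g|$. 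One can replace $(N-1)!$ by the cleaner bound $N^{N-1}$ if that streamlines matching the stated exponent, or use $(N-1)! \le N^{N-1}$ directly.

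\textbf{Step 2: pass from $g^{-1}$ to $\sqrt{g^{-1}}$.} Let $A := g^{-1}$, a positive definite matrix with known entry bounds, hence known bounds on $\|A\|_2 \le N \max_{ij}|A_{ij}|$ (using $\|A\|_2 \le \sqrt{\|A\|_1 \|A\|_\infty} \le N\max|A_{ij}|$, or simply $\|A\|_2 \le$ (number of entries)$\cdot\max|A_{ij}|$). The eigenvalues of $\sqrt{A}$ are the square roots of the eigenvalues of $A$, so $\|\sqrt{A}\|_2 = \sqrt{\|A\|_2}$. Then each entry satisfies $|(\sqrt{A})_{ik}| \le \|\sqrt{A}\|_2 = \sqrt{\|A\|_2} \le \sqrt{N \max_{ij}|A_{ij}|}$. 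Combining with Step 1, $|(\sqrt{g^{-1}})_{ik}| \le \sqrt{N \cdot (N-1)!\,\|g\|_1^{N-1}/|\det g|}$.

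\textbf{Step 3: clean up the exponents.} It remains to check that $\sqrt{N \cdot (N-1)!} \le N^{2 + (N-1)/4}$, i.e. that $(N-1)! \le N^{3 + (N-1)/2} = N^{(N+5)/2}$, which holds comfortably for all $N \ge 1$ since $(N-1)! \le N^{N-1}$ and $N-1 \le (N+5)/2$ is false for large $N$ — so here one needs the slightly sharper estimate $(N-1)! \le N^{N/2}$ for $N$ large, or Stirling, or one simply absorbs the discrepancy by being generous in Steps 1–2 (e.g. using $\|A\|_2 \le N^2 \max|A_{ij}|$ rather than $N\max|A_{ij}|$, which contributes the extra $N^2$ visible in the statement). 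The main obstacle is purely bookkeeping: the target exponent $N^{2+(N-1)/4}$ is a specific consolidation of several loose inequalities, so the real work is choosing the norm bounds in Steps 1 and 2 with exactly the right slack so that the $\|g\|_1$ exponent comes out to $(N-1)/2$ and the $N$-power to $2 + (N-1)/4$; no conceptual difficulty arises, only the need to pick a consistent chain of crude estimates.
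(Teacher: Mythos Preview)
Your Step~2 is fine: for a positive definite matrix $A$ one has $|(\sqrt{A})_{ik}|\le \|\sqrt{A}\|_2=\sqrt{\|A\|_2}$, and $\|A\|_2\le N\max_{ij}|A_{ij}|$. The problem is Step~1. Bounding the minors via the Leibniz expansion gives
\[
|(g^{-1})_{ij}|\le \frac{(N-1)!\,\|g\|_1^{N-1}}{|\det g|},
\]
and plugging this into Step~2 you need $\sqrt{N\,(N-1)!}\le N^{2+(N-1)/4}$, i.e.\ $N!\le N^{(N+7)/2}$. This is \emph{false} for $N\ge 20$ (and by Stirling fails for all large $N$, since $N!\sim (N/e)^N$ grows much faster than $N^{N/2}$). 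Your proposed fix $(N-1)!\le N^{N/2}$ already fails at $N=10$, and ``being more generous'' in Step~2 by replacing $N$ with $N^2$ makes the constant larger, not smaller. So as written the argument does not close.

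The paper avoids the factorial altogether by working with eigenvalues instead of cofactors: writing $\lambda_1\le\dots\le\lambda_N$ for the eigenvalues of $g$, one has
\[
\|g^{-1}\|_2=\frac{1}{\lambda_1}\le \frac{\lambda_N^{N-1}}{\lambda_1\cdots\lambda_N}=\frac{\|g\|_2^{N-1}}{\det g},
\]
and then $\|g\|_2\le\sqrt{N}\,\|g\|_1$ gives exactly the factor $N^{(N-1)/4}$. Combined with the entrywise bound $|(\sqrt{A})_{ij}|\le N^2\sqrt{\|A\|_2}$ (obtained via the Frobenius norm and the trace), this yields the stated $N^{2+(N-1)/4}$. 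If you want to salvage the cofactor route, replace the Leibniz bound by Hadamard's inequality on the $(N-1)\times(N-1)$ minor: each column has Euclidean norm at most $\sqrt{N-1}\,\|g\|_1$, so the minor is at most $(N-1)^{(N-1)/2}\|g\|_1^{N-1}$, which after taking the square root gives $(N-1)^{(N-1)/4}$ rather than $\sqrt{(N-1)!}$ and does fit under the target bound.
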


\begin{proof}
    Denote by $||\cdot||_1$, $||\cdot||_2$, and $||\cdot||_F$ the $1$-norm, $2$-norm, and Frobenius norm for matrices respectively.
    Let $A$ be any symmetric, positive definite matrix and $\sqrt{A}$ be its uniquely defined positive definite square root.
    Denote by $\lambda_1,\dots,\lambda_N$ the eigenvalues of $A$, repeated according to their multiplicity.
    Then for any $i,j \in \{1,\dots, N\}$:
    \begin{align*}
        |(\sqrt{A})_{ij}|
        &\leq
        ||\sqrt{A}||_1
        \\
        &\leq
        \sqrt{N}
        ||\sqrt{A}||_{F}
        \\
        &=
        \sqrt{N}
        \sqrt{\sum_{i=1}^N \lambda_i}
        \\
        &\leq
        \sqrt{N}
        \sum_{i=1}^N \sqrt{\lambda_i}
        \\
        &\leq
        \sqrt{N} \cdot N \cdot \sqrt{||A||_{F}}
        \\
        &\leq
        N^{2} \sqrt{||A||_2}.
    \end{align*}
    In the first two steps, and the last one, we used the equivalence between the $||\cdot||_1$, $||\cdot||_F$ and $||\cdot||_2$ (see e.g. \cite[Theorem 1.6.2.6]{Geijn2023}), in the third step we used that the eigenvalues of $\sqrt{A}$ are $\sqrt{\lambda_1},\dots,\sqrt{\lambda_N}$, in the fourth step we used that the square root is subadditive and used $|\lambda_i| \leq ||A||_F$ for all $i \in \{1,\dots,N\}$ in the second-to-last step.
    Applying this to $A=g^{-1}$ we obtain
    \begin{align*}
        \left|\left(
        \sqrt{g^{-1}}
        \right)_{ik}\right|
        \leq
        N^{2} \sqrt{||g^{-1}||_2}
        \leq 
        N^{2} \sqrt{\frac{||g||_2^{N-1}}{\det g}}
        \leq
        N^{2+(N-1)/4} \frac{||g||_1^{(N-1)/2}}{(\det g)^{1/2}}.
    \end{align*}
    Here we used in the second step that for an invertible and diagonalisable matrix $A \in \R^{N \times N}$ with eigenvalues $\lambda_1 \leq \dots \leq \lambda_N$ we have
    \begin{align*}
        ||A^{-1}||_2
        =
        \frac{1}{|\lambda_1|}
        \leq
        \frac{1}{|\lambda_1|}
        \cdot
        \prod_{i=2}^N
        \frac{|\lambda_N|}{|\lambda_i|}
        =
        \frac{|\lambda_N|^{N-1}}{|\det A|}
        =
        \frac{||A||_2^{N-1}}{|\det A|}.
        \quad
        \qedhere
    \end{align*}
\end{proof}

We can now combine these linear algebra results to compute a bound for the second fundamental form of $M$ in terms of the reciprocal of $\det g$:

\begin{lemma}
    \label{lemma:det-g-for-second-FF-bound}
    Let $f_1,\dots,f_k: \R^N \rightarrow \R$ be smooth functions and let $u_i:=\grad f_i$ be the vector fields defined by their gradients, and define $M=Z(f_1,\dots,f_k)$. Assume that $u_1,\dots,u_k$ are linearly independent at each point of $M$.
    Denote by $g$ the smooth $k\times k$ matrix given by
    \[
    (g_{ij})
    :=
    \langle u_i, u_j \rangle 
    .
    \]
    Then for every $p$ in $M$ and every $v,w \in T_pM$, the second fundamental form of $M$ satisfies
    \begin{align}
        |\operatorname{II}(v,w)|_{\R^N}
        &\leq
        k^{3+(k-1)/4} \frac{||g_p||_1^{(k-1)/2}}{|\det g_p|^{1/2}}
        \sum_{i=1}^k
        |\Hess (f_i)(v,w)|.
    \end{align}
\end{lemma}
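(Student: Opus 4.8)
The plan is to write $\operatorname{II}(v,w)$ in an orthonormal basis of the normal space $T_pM^\perp$ built from the gradients $u_i$, and then to estimate the resulting coefficients using \cref{lemma:ONB-construction} and \cref{lemma:sqrt-inverse-g-element-estimate}.

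First I would record two preliminary facts. Since the $u_i$ are linearly independent at $p$, we have $T_pM=\{v\in\R^N:\langle u_i,v\rangle=0\ \forall i\}=\spann\{u_1,\dots,u_k\}^\perp$, hence $T_pM^\perp=\spann\{u_1,\dots,u_k\}$ is a $k$-dimensional subspace of $\R^N$. Second, for $v,w\in T_pM$ one has the identity $\langle \operatorname{II}(v,w),u_i\rangle=-\Hess(f_i)(v,w)$; this is the several-function analogue of \cref{proposition:oneill-II-exercise}, proved in the same way, by picking a curve $\gamma$ in $M$ with $\gamma(0)=p$, $\gamma'(0)=v$, differentiating $f_i\circ\gamma\equiv 0$ twice to get $\Hess(f_i)(\gamma',\gamma')+\langle u_i,\gamma''\rangle=0$, using that the normal component of $\gamma''(0)$ is $\operatorname{II}(v,v)$ and $u_i$ is normal, and polarising. (The sign is immaterial below, since only $|\operatorname{II}(v,w)|$ matters.)

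Next I would apply \cref{lemma:ONB-construction} to $u_1,\dots,u_k$: although stated for $N$ vectors in $\R^N$, its proof applies verbatim to $k$ linearly independent vectors spanning a $k$-dimensional subspace, so with $\beta_{ai}:=(\sqrt{g_p^{-1}})_{ai}$ the vectors $n_a:=\sum_{i=1}^k\beta_{ai}u_i$ form an orthonormal basis of $T_pM^\perp$. Expanding $\operatorname{II}(v,w)=\sum_{a=1}^k\langle\operatorname{II}(v,w),n_a\rangle n_a$ and using the identity above gives $\langle\operatorname{II}(v,w),n_a\rangle=-\sum_{i=1}^k\beta_{ai}\Hess(f_i)(v,w)$, whence, by $\sqrt{\sum_a x_a^2}\le\sum_a|x_a|$ and the triangle inequality,
\[
|\operatorname{II}(v,w)|_{\R^N}\le\sum_{a=1}^k\sum_{i=1}^k|\beta_{ai}|\,\bigl|\Hess(f_i)(v,w)\bigr|\le k\Bigl(\max_{a,i}|\beta_{ai}|\Bigr)\sum_{i=1}^k\bigl|\Hess(f_i)(v,w)\bigr|.
\]
Finally I would bound $\max_{a,i}|\beta_{ai}|=\max_{a,i}|(\sqrt{g_p^{-1}})_{ai}|$ by \cref{lemma:sqrt-inverse-g-element-estimate} (again applied to the $k\times k$ matrix $g_p$, i.e.\ with $N$ replaced by $k$), which gives $\max_{a,i}|\beta_{ai}|\le k^{2+(k-1)/4}\|g_p\|_1^{(k-1)/2}/|\det g_p|^{1/2}$; multiplying by the remaining factor $k$ produces exactly the claimed constant $k^{3+(k-1)/4}$.

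Once the two lemmas are in hand this is essentially bookkeeping; the only points that need a little care are the coefficient identity $\langle\operatorname{II}(v,w),u_i\rangle=-\Hess(f_i)(v,w)$ (a direct generalisation of O'Neill's Exercise 4.3) and the remark that \cref{lemma:ONB-construction} and \cref{lemma:sqrt-inverse-g-element-estimate}, though phrased for $N\times N$ data, apply unchanged to the $k\times k$ matrix $g_p$ and the $k$ vectors $u_1,\dots,u_k$. I do not anticipate any serious obstacle.
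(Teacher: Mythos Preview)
Your proposal is correct and follows essentially the same route as the paper: build an orthonormal frame $V_i=\sum_j(\sqrt{g^{-1}})_{ij}u_j$ of the normal bundle via \cref{lemma:ONB-construction}, expand $\operatorname{II}(v,w)$ in this frame, identify the coefficients with Hessians through $|\langle\nabla_v w,u_j\rangle|=|\Hess(f_j)(v,w)|$, and finish with the entrywise bound on $\sqrt{g^{-1}}$ from \cref{lemma:sqrt-inverse-g-element-estimate}. Your explicit remark that both lemmas, though stated for $N\times N$ data, must be applied with $N$ replaced by $k$ is a point the paper uses tacitly; otherwise the arguments are the same up to bookkeeping.
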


\begin{proof}
Let $X,Y$ be tangent vector fields on $M$.
Let $j \in \{1,\dots,k\}$.
Then it follows for the variety $M_j:=Z(f_j)$ defined by just one polynomial that
\begin{align}
    \label{equation:hessian-nabla-u}
    |\Hess (f_j)(X,Y)|
    &=
    \left|
    \frac{\Hess (f_j)(X,Y)}{|u_j|}
    \right|
    \cdot |u_j|
    =
    \left| \operatorname{II}_{M_j}(X,Y) \right| \cdot |u_j|
    =
    \left|
    \< \nabla_X Y, u_j \>
    \right|.
\end{align}

By \cref{lemma:ONB-construction}, the vectors $V_i= \sum_{j=1}^k\left(\sqrt{g^{-1}}\right)_{ij} u_j$ form an orthonormal frame of the normal bundle of $M$.
Using this basis, we get
\begin{align*}
    \begin{split}
    |
    \operatorname{II}(X,Y) 
    |_{\R^N}
    &=
    \left|
    \sum_{i=1}^k \langle \nabla_X Y, V_i\rangle V_i
    \right|_{\R^N}
    \\
    &=
    \left|
    \sum_{1\leq i,j \leq k} \left(
    \sqrt{g^{-1}}
    \right)_{ij} \langle \nabla_X Y, u_j \rangle V_i 
    \right|_{\R^N}
    \\
    &\leq
    \sum_{1\leq i,j \leq k} 
    \left|
    \left(
    \sqrt{g^{-1}}
    \right)_{ij} \Hess (f_j)(X,Y) V_i
    \right|_{\R^N},
    \end{split}
\end{align*}
where we used \cref{equation:hessian-nabla-u} in the last step.
The Lemma then follows immediately from \cref{lemma:sqrt-inverse-g-element-estimate}.
\end{proof}

\subsection{Using \texorpdfstring{$\det g$}{\det g}  for a lower bound for the smallest bottleneck}
\label{subsection:det-g-for-smallest-bottleneck}

We are still working towards generalising the results from sections \cref{section:lower-bound-for-df,section:lower-bound-for-reach}.
So far, we gave an algorithm to compute a lower bound for $\det g$ and used this lower bound to bound the second fundamental form of $M$.
The last thing remaining is to compute a lower bound for the smallest bottleneck of $M$, which will be executed in this section.
This generalises \cref{subsection:lower-bound-for-bottleneck}.

We begin with another linear algebra lemma:

\begin{proposition}
    \label{proposition:U-norm-equivalence}
    Let $u_1,\dots,u_k \in R^N$ be linearly independent satisfying $|u_i|_{\R^N} \leq M_2$ for all $i \in \{1,\dots,k\}$.
    Define the norm $|\cdot|_{U}$ on the $k$-dimensional vector space $\spann \{u_1,\dots,u_k\}$ as
    \[
        \left|
        \sum_{i=1}^k
        \alpha_i u_i
        \right|_U
        =
        \sum_{i=1}^k
        |\alpha_i|.
    \]
    We then have that for all $v \in \spann \{u_1,\dots,u_k\}$:
    \begin{align}
        \begin{split}
            \left| v \right|_{\R^N}
            &\leq
            M_2
            |v|_{U},
            \\
            \left| v \right|_{U}
            &\leq
            \Cl{const:U-to-R^N-norm-equivalence}
            |v|_{\R^N},
        \end{split}
    \end{align}
    where $\Cr{const:U-to-R^N-norm-equivalence}=N^{4+(N-1)/4} \frac{||g||_1^{(N-1)/2}}{|\det g|^{1/2}}$ for $g_{ij}=\langle u_i,u_j \rangle$.
\end{proposition}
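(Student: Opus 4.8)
The plan is to prove the two inequalities separately, the first being essentially trivial and the second being an assembly of \cref{lemma:ONB-construction,lemma:sqrt-inverse-g-element-estimate}. For the bound $|v|_{\R^N}\le M_2|v|_U$, I would simply write $v=\sum_{i=1}^k\alpha_iu_i$ and apply the triangle inequality: $|v|_{\R^N}\le\sum_i|\alpha_i|\,|u_i|_{\R^N}\le M_2\sum_i|\alpha_i|=M_2|v|_U$.

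For the reverse bound I would pass to the orthonormal basis furnished by \cref{lemma:ONB-construction}. That lemma is stated for $N$ vectors, but its proof works verbatim for $k\le N$ linearly independent vectors spanning a $k$-dimensional subspace, producing $v_i=\sum_{l}\beta_{il}u_l$ with $\beta=\sqrt{g^{-1}}$ and $\langle v_i,v_j\rangle=\delta_{ij}$. Writing $v=\sum_l\alpha_lu_l$ and setting $w_i:=\langle v,v_i\rangle$, the orthonormal expansion $v=\sum_iw_iv_i$ gives $|v|_{\R^N}^2=\sum_iw_i^2$, hence $|w_j|\le|v|_{\R^N}$ for every $j$. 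A short computation using $g_{lm}=\langle u_l,u_m\rangle$, the symmetry of $g$, and the identity $\sqrt{g^{-1}}\,g=\sqrt g$ then shows $w=\sqrt g\,\alpha$, so that $\alpha=\sqrt{g^{-1}}\,w=\beta w$.

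With these relations in hand the estimate is mechanical. Since $|w_j|\le|v|_{\R^N}$ for all $j$ and there are $k^2$ summands,
\begin{align*}
|v|_U=\sum_{i=1}^k|\alpha_i|=\sum_{i=1}^k\Bigl|\sum_{j=1}^k\beta_{ij}w_j\Bigr|\le k^2\Bigl(\max_{i,j}|\beta_{ij}|\Bigr)\,|v|_{\R^N},
\end{align*}
and \cref{lemma:sqrt-inverse-g-element-estimate} (applied with $k$ in place of $N$, then bounded crudely using $k\le N$) gives $\max_{i,j}|(\sqrt{g^{-1}})_{ij}|\le N^{2+(N-1)/4}\,||g||_1^{(N-1)/2}/|\det g|^{1/2}$. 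Combining the two bounds produces exactly the asserted constant $\Cr{const:U-to-R^N-norm-equivalence}=N^{4+(N-1)/4}\,||g||_1^{(N-1)/2}/|\det g|^{1/2}$.

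There is no genuine obstacle: the statement is a bookkeeping exercise built on the two earlier linear-algebra lemmas. The only points needing care are verifying the change-of-coordinates identity $\alpha=\sqrt{g^{-1}}\,w$ (which rests on $\sqrt{g^{-1}}$ being the inverse of $\sqrt g$) and being suitably generous when converting powers of $k$ into powers of $N$, so that the stated value of $\Cr{const:U-to-R^N-norm-equivalence}$ comes out cleanly.
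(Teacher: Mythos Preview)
Your proposal is correct and follows essentially the same approach as the paper: the first inequality is the triangle inequality, and for the second both you and the paper expand $v$ in the orthonormal basis $v_i$ from \cref{lemma:ONB-construction}, bound $|\langle v,v_i\rangle|\le|v|_{\R^N}$ by Cauchy--Schwarz, and then apply \cref{lemma:sqrt-inverse-g-element-estimate} to each entry of $\sqrt{g^{-1}}$ to pick up the factor $N^{2+(N-1)/4}\|g\|_1^{(N-1)/2}/|\det g|^{1/2}$, with the remaining $N^2$ coming from the $k^2\le N^2$ summands. Your intermediate identity $\alpha=\sqrt{g^{-1}}\,w$ is just a repackaging of the paper's direct substitution $v=\sum_{i,j}(\sqrt{g^{-1}})_{ij}\langle v,v_i\rangle u_j$, since the coefficient of $u_j$ in that expansion is exactly $(\sqrt{g^{-1}}w)_j$.
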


\begin{proof}
    It is easy to check that $|\cdot|_U$ is a norm on $\R^N$. All norms on finite-dimensional vector spaces are equivalent, so the equivalence of $|\cdot|_{\R^N}$ and $|\cdot|_U$ is immediate, but it remains to find the explicit norm-equivalence constants.

    For $v=\sum_{i=1}^k \alpha_i u_i \in \R^N$ we have that
    \begin{align*}
        |v|_{\R^N}
        =
        \left| \sum_{i=1}^k \alpha_i u_i \right|_{\R^N}
        \leq
        \sum_{i=1}^k
        |\alpha_i| \cdot | u_i |_{\R^N}
        \leq
        M_2 |v|_{U}.
    \end{align*}
    Let $v_1,\dots,v_k$ be the orthonormal basis from \cref{equation:v_i-ONB} for $\spann \{u_1,\dots,u_k\}$.
    Then
    \begin{align*}
        |v|_{U}
        &=
        \left|
        \sum_{i=1}^k
        \langle v, v_i \rangle v_i
        \right|_U
        \\
        &=
        \left|
        \sum_{i,j=1}^k
        \left(\sqrt{g^{-1}}\right)_{ij}
        \langle v, v_i \rangle u_j
        \right|_U
        \\
        &=
        \sum_{j=1}^k
        \left|
        \sum_{i=1}^k
        \left(\sqrt{g^{-1}}\right)_{ij}
        \langle v, v_i \rangle
        \right|
        \\
        &\leq
        |v|_{\R^N} \underbrace{|v_i|_{\R^N}}_{=1}
        \sum_{ij}
        \left|
        \left(
        \sqrt{g^{-1}}
        \right)_{ij}
        \right|
        \\
        &\leq
        N^2 N^{2+(N-1)/4} \frac{||g||_1^{(N-1)/2}}{|\det g|^{1/2}}
        |v|_{\R^N},
    \end{align*}
    where we used Cauchy–Schwarz inequality in the first inequality and \cref{lemma:sqrt-inverse-g-element-estimate} in the last step.
\end{proof}

Using this, we can now deduce a lower bound for the smallest bottleneck of $M$.
Note the dependence on a lower bound of $\det g$ through the appearance of the constant $\Cr{const:U-to-R^N-norm-equivalence}$ in the following proposition:

\begin{proposition}
    \label{proposition:det-g-for-smallest-bottleneck}
    Let $M=Z(f_1,\dots,f_k) \subset U \subset \R^N$ for a convex set $U$.
    Assume $|\grad f_i|_{\R^N} \leq m$ and $|\Hess f_i|_2 \leq K$ for $i \in \{1,\dots,k\}$ on $U$.
    Denote by $\lambda$ the length of the smallest bottleneck of $M$.
    Then:
    \[
        \lambda \geq \frac{1}{2 \Cr{const:U-to-R^N-norm-equivalence} k K (m \Cr{const:U-to-R^N-norm-equivalence} k+1)},
    \]
    where $\Cr{const:U-to-R^N-norm-equivalence}$ is the constant defined in \cref{proposition:U-norm-equivalence}.
\end{proposition}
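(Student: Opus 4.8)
The plan is to mirror the proof of \cref{proposition:bottle-neck-bound-one-function}, with \cref{proposition:U-norm-equivalence} supplying the linear-algebra input that replaces the observation (available only in the hypersurface case) that $\gamma'(0)$ is parallel to the single gradient $\nabla f$. Let $p,q\in M$ realise the smallest bottleneck, so $|p-q|_{\R^N}=\lambda$, and let $\gamma:[0,\lambda]\to U$ be the unit-speed straight segment from $p$ to $q$, which stays in $U$ by convexity. The bottleneck condition says that $\gamma'(0)$ is normal to $M$ at $p$ and that $\gamma'(\lambda)=\gamma'(0)$ is normal to $M$ at $q$; since $M$ is non-singular, the normal space at $p$ is exactly $\spann\{\nabla f_1(p),\dots,\nabla f_k(p)\}$, so one may write $\gamma'(0)=\sum_{i=1}^k\alpha_i\nabla f_i(p)$.

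First I would control the coefficients: applying \cref{proposition:U-norm-equivalence} to $u_i=\nabla f_i(p)$ (which satisfy $|u_i|_{\R^N}\le m$) gives $\sum_i|\alpha_i|=|\gamma'(0)|_U\le\Cr{const:U-to-R^N-norm-equivalence}$, where $\Cr{const:U-to-R^N-norm-equivalence}$ is the constant of \cref{proposition:U-norm-equivalence} for $g_p=(\langle\nabla f_i(p),\nabla f_j(p)\rangle)$ — this is the point at which a lower bound for $\det g$, e.g.\ the one from \cref{algorithm:lower-bound-for-det}, enters. Next I would consider the scalar function $F(t):=\sum_{i=1}^k\alpha_i f_i(\gamma(t))$. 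Since $\gamma(0)=p$ and $\gamma(\lambda)=q$ both lie on $M=Z(f_1,\dots,f_k)$, $F$ vanishes at both endpoints, so Rolle's theorem gives $\theta\in(0,\lambda)$ with $F'(\theta)=0$, while on the other hand $F'(0)=\sum_i\alpha_i\langle\nabla f_i(p),\gamma'(0)\rangle=\langle\gamma'(0),\gamma'(0)\rangle=1$. Applying the mean value theorem to $F'$ on $[0,\theta]$, and using that $\gamma$ is a straight line so that $F''(t)=\sum_i\alpha_i\langle\Hess f_i(\gamma(t))\gamma'(t),\gamma'(t)\rangle$ with $|F''(t)|\le\sum_i|\alpha_i|\,|\Hess f_i|_2\le\Cr{const:U-to-R^N-norm-equivalence}K$, one obtains $1=|F'(0)-F'(\theta)|=|\theta F''(\xi)|\le\lambda\,\Cr{const:U-to-R^N-norm-equivalence}K$ for some $\xi$. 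This already yields $\lambda\ge 1/(\Cr{const:U-to-R^N-norm-equivalence}K)$, which is stronger than the claimed bound; the weaker form stated in the proposition is what comes out if one runs the estimate less tightly — e.g.\ bounding $\sum_i|\alpha_i|\le k\max_i|\alpha_i|$, handling the two endpoints $p$ and $q$ separately, and using the crude lower bound $|\gamma'(0)|_U\ge 1/m$ in place of the exact value $F'(0)=1$, which is where the factors $k$ and $(m\Cr{const:U-to-R^N-norm-equivalence}k+1)$ appear. I would simply present the clean version, which implies the stated inequality.

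The main obstacle is bookkeeping rather than conceptual: unlike in \cref{proposition:bottle-neck-bound-one-function}, where $\langle\nabla f(p),\gamma'(0)\rangle=\pm|\nabla f(p)|_{\R^N}$ outright, here $\gamma'(0)$ only lies in the \emph{span} of the $\nabla f_i(p)$, and converting this into a usable estimate forces one through \cref{proposition:U-norm-equivalence}; this is precisely where the constant $\Cr{const:U-to-R^N-norm-equivalence}$, and with it the dependence on a lower bound for $\det g$, is introduced, and one must be careful that the lower bound on $|F'(0)|$ survives the passage through the norm comparison. A secondary point worth recording is that the argument uses only the frame $\{\nabla f_i(p)\}$ at the single endpoint $p$, never that the $\nabla f_i$ stay linearly independent along the interior of $\gamma$, so no hypothesis on $U$ beyond convexity is needed.
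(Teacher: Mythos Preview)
Your argument is correct and in fact cleaner than the paper's, yielding the sharper bound $\lambda \geq 1/(\Cr{const:U-to-R^N-norm-equivalence} K)$. The key simplification is your choice of $F(t)=\sum_i \alpha_i f_i(\gamma(t))$ with the coefficients $\alpha_i$ \emph{fixed} once and for all at $p$: since each $f_i$ vanishes at both endpoints, $F$ does too, and $F''$ reduces to a pure Hessian term because $\gamma''=0$. The paper instead writes $\gamma'(\lambda)=\sum_i\beta_i u_i(q)$ at the second endpoint as well, linearly interpolates $\psi_i(t)=\alpha_i+\tfrac{t}{\lambda}(\beta_i-\alpha_i)$, and works with $F(t)=\sum_i\psi_i(t)f_i(\gamma(t))$. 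This introduces a cross term $2\psi_i'(t)\langle u_i(\gamma(t)),\gamma'(t)\rangle$ into $F''$, which forces a separate estimate on $\sum_i|\beta_i-\alpha_i|$ via the Hessian bound and a second application of \cref{proposition:U-norm-equivalence}; the factors $k$ and $(m\Cr{const:U-to-R^N-norm-equivalence} k+1)$ in the stated bound arise from that cross term and from bounding $|\psi_i(\xi)|\le|\alpha_i|+|\beta_i|$, not from the looser bookkeeping you guessed. Your route avoids all of this, uses the norm comparison only once (at $p$), and delivers a bound independent of $m$ and $k$; the paper's interpolation buys nothing here and appears to be an unnecessary complication.
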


\begin{proof}
    Let $p,q \in M$ be two points connected by a smallest bottleneck, i.e. $|p-q|_{\R^N}=\lambda$ and the straight line joining $p$ and $q$ is normal to $M$ at those points.
    Denote by $\gamma : [0,\lambda] \rightarrow \R^N$ the unit speed straight line from $p$ to $q$.

    Write $u_i := \grad f_i$ as a shorthand.
    By definition of bottleneck, we have that $\gamma'(0) \perp T_p M$ and $\gamma'(\lambda) \perp T_q M$, i.e. there exists $\alpha_1,\dots,\alpha_k, \beta_1,\dots, \beta_k \in \R$ such that
    \[
        \gamma'(0)=\sum_{i=1}^k \alpha_i u_i(p)
        \text{ and }
        \gamma'(\lambda)=\sum_{i=1}^k \beta_i u_i(q).
    \]
    Then for $i \in \{1,\dots,k\}$:
    \begin{align}
        \label{equation:alpha-beta-estimate}
        \begin{split}
        |\alpha_i|
        &\leq
        \sum_{j=1}^k |\alpha_j|
        =|\gamma'(0)|_U
        \leq
        \Cr{const:U-to-R^N-norm-equivalence}
        |\gamma'(0)|_{\R^N}
        =
        \Cr{const:U-to-R^N-norm-equivalence},
        \\
        |\beta_i|
        &\leq \sum_{j=1}^k |\beta_j|
        =|\gamma'(0)|_U \leq
         \Cr{const:U-to-R^N-norm-equivalence}
        |\gamma'(0)|_{\R^N}
        = \Cr{const:U-to-R^N-norm-equivalence},
        \end{split}
    \end{align}
    and
    \begin{align}
        \label{equation:Ui-estimate}
        \begin{split}
        |u_i(q)-u_i(p)|_1
        &=
        \sum_{j=1}^k
        \left|
        \frac{\partial f_i}{\partial x_j}(q)
        -
        \frac{\partial f_i}{\partial x_j}(p)
        \right|
        \\
        &=
        \lambda \cdot
        \sum_{j=1}^k
        \left|
        \gamma'
        \left(
        \frac{\partial f_i}{\partial x_j}( \gamma(\theta_{ji}))
        \right)
        \right|
        \quad \quad \text{ for some $\theta_{ji} \in (0,\lambda)$}
        \\
        &=
        \lambda \cdot 
        \sum_{j=1}^k
        \left|
        \gamma'(\theta_{ji}) \cdot \Hess f_i \cdot e_j
        \right|
        \\
        &\leq
        \lambda \cdot
        \sum_{j=1}^k
        \underbrace{|\gamma'|_{\R^N}}_{=1}
        \cdot
        |\Hess f_i |_2
        \cdot
        \underbrace{|e_j|_{\R^N}}_{=1},
        \end{split}
    \end{align}
    where we used the mean value theorem in the second step,
    and we used the Cauchy-Schwarz inequality and definition of $|\cdot|_2$-norm for matrices in the last step.
    Thus,
    \begin{align*}
        \sum_{i=1}^k |\beta_i - \alpha_i|
        &=
        \left|
        \sum_{i=1}^k (\beta_i-\alpha_i) u_i(p)
        \right|_U
        \\
        &\leq
        \Cr{const:U-to-R^N-norm-equivalence}
        \left|
        \sum_{i=1}^k (\beta_i-\alpha_i) u_i(p)
        \right|_{\R^N}
        \\
        &\leq
        \Cr{const:U-to-R^N-norm-equivalence}
        \left(
        \underbrace{
        \left|
        \sum_{i=1}^k 
        \alpha_i u_i(p)-\beta_i u_i(q)
        \right|_{\R^N}
        }_{=|\gamma'(0)-\gamma'(\lambda)|_{\R^N}=0}
        +
        \left|
        \sum_{i=1}^k
        \beta_i u_i(q)-\beta_i u_i(p)
        \right|_{\R^N}
        \right)
        \\
        &\leq
        \Cr{const:U-to-R^N-norm-equivalence}
        |\beta|_{\infty}
        \cdot
        \left(
        \sum_{i=1}^k
        |u_i(q)-u_i(p)|_{1}
        \right)
        \\
        &\leq
        \Cr{const:U-to-R^N-norm-equivalence}^2 k^2 \lambda K,
    \end{align*}
    where the first step is the definition of $|\cdot|_U$;
    in the second step we used \cref{proposition:U-norm-equivalence}; in the third step we added and subtracted $\sum \beta_iu_i(q)$ and applied the triangle inequality;
    in the fourth step we used the norm equivalence $|\xi|_{\R^N} \leq |\xi|_1$ for all $\xi \in \R^N$;
    and in the last step we used \cref{equation:alpha-beta-estimate} to estimate $|\beta|_\infty$ and used \cref{equation:Ui-estimate} to estimate the sum.

    Define $\psi_i(t):=\alpha_i + \frac{t}{\lambda} (\beta_i-\alpha_i)$ and $F(t):=\sum_i \psi_i(t) f_i(\gamma(t))$. Observe that $F(0)=0=F(\lambda)$, so by Rolle's theorem, there exists $\theta \in (0,\lambda)$ such that
    \begin{align*}
        \label{equation:rolle-theorem-psi-i}
        0
        =
        F'(\theta).
    \end{align*}
    Since $F'(0)= |\gamma'(0)|^2_{\R^N} =1$, 
    \begin{align*}
        1
        &=
        F'(0) - F’(\theta) 
        \\
        &=
        \theta
        \frac{\d^2}{\d t^2}
        \bigg\rvert_{t=\xi}
        \sum_i \psi_i(t) f_i(\gamma(t))
        \quad
        \text{ for some }
        \xi \in (0,\theta)
        \\
        &\leq
        \theta \sum_{i=1}^k
        \left(        \underbrace{\psi_i''(\xi)}_{=0}
        f_i(\gamma(\xi))
        +
        2 \underbrace{|\psi'(\xi)|}
        _{\leq |\beta_i-\alpha_i|/\lambda} 
        \underbrace{|\langle u_i(\gamma(\xi)), \gamma'(\xi) \rangle|}
        _{\leq m}
        +    \underbrace{|\psi_i(\xi)|}_{\leq |\alpha_i|+|\beta_i|}
        \underbrace{|\langle \nabla_{\gamma'(\xi)} \nabla f_i, \gamma'(\xi) \rangle|}
        _{\leq K}
        \right)
        \\
        &\leq
        \theta
        \left(
        2\Cr{const:U-to-R^N-norm-equivalence}^2 k^2 Km
        +
        2\Cr{const:U-to-R^N-norm-equivalence} kK
        \right)
        \leq
        2 \lambda \Cr{const:U-to-R^N-norm-equivalence} k K (m \Cr{const:U-to-R^N-norm-equivalence} k+1).
    \end{align*}

    Thus, $\lambda \geq 1/(2 \Cr{const:U-to-R^N-norm-equivalence} k K (m \Cr{const:U-to-R^N-norm-equivalence} k+1))$.
\end{proof}

\subsection{Using $\det g$ for a lower bound of the reach}
\label{subsection:det-g-for-reach}

A lower bound for the reach is now an immediate corollary of \cref{lemma:det-g-for-second-FF-bound,proposition:det-g-for-smallest-bottleneck,theorem:reach-characterisation}, which we state here for convenience:

\begin{corollary}
    Let $M=Z(f_1,\dots,f_k) \subset U \subset \R^N$ for a convex set $U$ and let $u_i := \grad f_i$.
    We define the matrix $g\in C^\infty(\R^N, \R^{k \times k})$ whose entries are,
    \[
    (g_{ij}) := 
     \< u_i, u_j \>.
    \]
    Assume the following bounds,
    \begin{align*}
    \|{g}_1
    &\leq 
    \Cl{g-upper-bound},
    \\
    |\det g|
    &\geq
    \Cl{det-g-lower-bound},
    \\
    |\Hess f_i|_2 
    &\leq
    \Cl{Hess-upper-bound},
    \\
    | \grad f_i |_{\R^N}
    &\leq
    \Cl{grad-upper-bound}
    \end{align*}
    hold on $U$.
    Then
    \begin{equation}
    \tau
    \geq
    \min
    \left\{
    k^{-4-(k-1)/4}
    \frac{\Cr{det-g-lower-bound}^{1/2}}{\Cr{g-upper-bound}^{(k-1)/2} \Cr{Hess-upper-bound} },
    \quad
    \frac{1}{2} \cdot
    \frac{1}{(2 k \Cr{Hess-upper-bound} \Cr{const:U-to-R^N-norm-equivalence} (\Cr{grad-upper-bound} \Cr{const:U-to-R^N-norm-equivalence} k+1))}
    \right\},
    \end{equation}
    where $\Cr{const:U-to-R^N-norm-equivalence}$ was defined in \cref{proposition:U-norm-equivalence}.
\end{corollary}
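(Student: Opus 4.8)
The plan is to obtain the corollary by feeding the hypotheses into \cref{lemma:det-g-for-second-FF-bound}, \cref{proposition:det-g-for-smallest-bottleneck} and \cref{theorem:reach-characterisation}. By \cref{theorem:reach-characterisation} we have $\tau=\min\{\eta,\rho\}$, where $\rho=\inf_{p\in M}\rho(p)$ is the minimal radius of curvature and $\eta=\lambda/2$ is the radius of the smallest bottleneck; so it suffices to show that the first entry of the stated minimum is a lower bound for $\rho$ and the second is a lower bound for $\eta$, and then combine. Since $|\det g|\geq\Cr{det-g-lower-bound}>0$ holds on $U\supset M$, the gradients $u_1,\dots,u_k$ are linearly independent at every point of $M$, so all the hypotheses of \cref{lemma:det-g-for-second-FF-bound}, \cref{proposition:U-norm-equivalence} and \cref{proposition:det-g-for-smallest-bottleneck} are in force.

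For the curvature term, fix $p\in M$ and a unit vector $v\in T_pM$. Because $M\subset U$, the assumed bounds give $||g_p||_1\leq\Cr{g-upper-bound}$, $|\det g_p|\geq\Cr{det-g-lower-bound}$, and $|\Hess f_i(v,v)|\leq|\Hess f_i|_2\leq\Cr{Hess-upper-bound}$ for each $i$, whence $\sum_{i=1}^k|\Hess f_i(v,v)|\leq k\,\Cr{Hess-upper-bound}$. Substituting into \cref{lemma:det-g-for-second-FF-bound} with $w=v$ yields
\[
    |\operatorname{II}(v,v)|_{\R^N}
    \;\leq\;
    k^{4+(k-1)/4}\,\frac{\Cr{g-upper-bound}^{(k-1)/2}\,\Cr{Hess-upper-bound}}{\Cr{det-g-lower-bound}^{1/2}}.
\]
Taking reciprocals, then the supremum over unit $v\in T_pM$ in the definition of $\rho(p)$, and finally the infimum over $p\in M$, gives $\rho\geq k^{-4-(k-1)/4}\,\Cr{det-g-lower-bound}^{1/2}/\bigl(\Cr{g-upper-bound}^{(k-1)/2}\,\Cr{Hess-upper-bound}\bigr)$, which is exactly the first entry of the minimum.

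For the bottleneck term I would apply \cref{proposition:det-g-for-smallest-bottleneck} with $m=\Cr{grad-upper-bound}$ and $K=\Cr{Hess-upper-bound}$; it gives $\lambda\geq 1/\bigl(2\,\Cr{const:U-to-R^N-norm-equivalence}\,k\,\Cr{Hess-upper-bound}\,(\Cr{grad-upper-bound}\,\Cr{const:U-to-R^N-norm-equivalence}\,k+1)\bigr)$, and halving yields $\eta=\lambda/2\geq\frac{1}{2}\cdot 1/\bigl(2k\,\Cr{Hess-upper-bound}\,\Cr{const:U-to-R^N-norm-equivalence}\,(\Cr{grad-upper-bound}\,\Cr{const:U-to-R^N-norm-equivalence}\,k+1)\bigr)$, the second entry of the minimum; combining the two estimates through $\tau=\min\{\eta,\rho\}$ then finishes the proof. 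The one point that genuinely needs care is the status of the constant $\Cr{const:U-to-R^N-norm-equivalence}$: as introduced in \cref{proposition:U-norm-equivalence} it is a pointwise quantity built from $g_{ij}=\langle u_i,u_j\rangle$ at a single point, whereas the two points realising the smallest bottleneck in the proof of \cref{proposition:det-g-for-smallest-bottleneck} are a priori distinct, so one needs $\Cr{const:U-to-R^N-norm-equivalence}$ as a bound valid uniformly over $M$. This is immediate: since $||g||_1\leq\Cr{g-upper-bound}$ and $|\det g|\geq\Cr{det-g-lower-bound}$ hold on $U\supset M$, the defining formula $\Cr{const:U-to-R^N-norm-equivalence}=N^{4+(N-1)/4}||g||_1^{(N-1)/2}/|\det g|^{1/2}$ is bounded on $M$ by $N^{4+(N-1)/4}\,\Cr{g-upper-bound}^{(N-1)/2}/\Cr{det-g-lower-bound}^{1/2}$, and it is this uniform value that one substitutes into the final inequality. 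Apart from this bookkeeping, the argument is a straight substitution of the hypotheses into the three cited results.
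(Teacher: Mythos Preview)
Your proposal is correct and follows exactly the approach the paper indicates: it is stated there as an immediate corollary of \cref{lemma:det-g-for-second-FF-bound}, \cref{proposition:det-g-for-smallest-bottleneck} and \cref{theorem:reach-characterisation}, and you have simply written out the substitution of the bounds (including the extra factor of $k$ from $\sum_i|\Hess f_i(v,v)|\le k\,\Cr{Hess-upper-bound}$ that turns the $k^{3+(k-1)/4}$ of the lemma into the $k^{4+(k-1)/4}$ of the corollary). Your remark on interpreting $\Cr{const:U-to-R^N-norm-equivalence}$ as the uniform bound $N^{4+(N-1)/4}\,\Cr{g-upper-bound}^{(N-1)/2}/\Cr{det-g-lower-bound}^{1/2}$ over $M$ is the right way to read the statement.
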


\section{The quantitative tubular neighbourhood theorem}
\label{section:tubular-nbhd}

\begin{theorem}
    \label{theorem:tubular-nbhd-theorem}
    Let $M^n \subset \mathbb{R}^N$ be a closed embedded submanifold without boundary
    and let $\tau$ be its reach.
    For
    \begin{align*}
        V^\tau
        &:=
        \{(x,v) \in NM : |v|_{\R^N} < \tau \} \text{ and } U^\tau_M := \{p \in \R^n: d(p,M)< \tau \}
    \end{align*}
    the endpoint map,
\begin{align}
    \begin{split}
    E: V^\tau & \rightarrow U^\tau_M \\
      (x,v)      &\mapsto x+v,
    \end{split}
\end{align} is a diffeomorphism.
\end{theorem}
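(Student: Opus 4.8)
The plan is to check the four standard properties in turn: $E$ is a well-defined smooth map, it is surjective onto $U^\tau_M$, it is injective, and it is a local diffeomorphism; a smooth bijection that is a local diffeomorphism is automatically a diffeomorphism. First, since $NM$ is the total space of a rank-$(N-n)$ vector bundle over $M^n$ it is a smooth $N$-manifold with $V^\tau$ an open subset, and $(x,v)\mapsto x+v$ is manifestly smooth; moreover $d(x+v,M)\le |v|_{\R^N}<\tau$, so the image indeed lies in $U^\tau_M$. For surjectivity, given $p\in U^\tau_M$ the distance $d(p,M)$ is attained at some $x\in M$ because $M$ is closed; differentiating $t\mapsto|p-\gamma(t)|_{\R^N}^2$ at $0$ along any curve $\gamma\subset M$ with $\gamma(0)=x$ shows $p-x\perp T_xM$, so $(x,p-x)\in V^\tau$ and $E(x,p-x)=p$.

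The heart of the matter is injectivity, which I would reduce to the claim that for every $(x,v)\in V^\tau$ the footpoint $x$ is the nearest point of $M$ to $p:=x+v$ (unique because $d(p,M)<\tau$, by the definition of the reach). This is trivial for $v=0$, so let $v\neq 0$ and let $y\in M$ be the nearest point to $p$. Expanding $|p-y|_{\R^N}^2=|v|_{\R^N}^2+2\langle v,x-y\rangle+|x-y|_{\R^N}^2$ and using $|p-y|_{\R^N}=d(p,M)\le|p-x|_{\R^N}=|v|_{\R^N}$ gives $2\langle v,y-x\rangle\ge|x-y|_{\R^N}^2$; on the other hand $v\perp T_xM$ yields $\langle v,y-x\rangle\le|v|_{\R^N}\,d(y-x,T_xM)$, and applying \cite[Theorem 2.2]{Aamari2019} to the pair $x,y\in M$ bounds $d(y-x,T_xM)\le|x-y|_{\R^N}^2/(2\tau)$. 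Combining these, if $y\neq x$ then $1\le|v|_{\R^N}/\tau$, contradicting $|v|_{\R^N}<\tau$; hence $y=x$. Granting the claim, if $E(x_1,v_1)=E(x_2,v_2)=p$ with $(x_i,v_i)\in V^\tau$, then $x_1$ and $x_2$ are both the unique nearest point of $M$ to $p$, so $x_1=x_2$ and therefore $v_1=p-x_1=p-x_2=v_2$.

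Finally, to see that $E$ is a local diffeomorphism, I would pick a local orthonormal frame $\nu_1,\dots,\nu_{N-n}$ of $T M^\perp$ near $x$, parametrise $NM$ by $(q,a)\mapsto(q,\sum_ia_i\nu_i(q))$, and differentiate $E$: a tangential variation $X\in T_xM$ contributes $X+\sum_ia_i\nabla_X\nu_i$ and a normal variation $\dot a$ contributes $\sum_i\dot a_i\nu_i(x)$. Splitting into tangential and normal parts and using the Weingarten formula $(\nabla_X\nu_i)^\top=-S_{\nu_i}X$, where $S_v$ is the shape operator in the normal direction $v$ characterised by $\langle S_vX,Y\rangle=\langle\operatorname{II}(X,Y),v\rangle$, the tangential part of $dE_{(x,v)}(X,\dot a)$ equals $(\operatorname{Id}-S_v)X$; a kernel vector thus forces $(\operatorname{Id}-S_v)X=0$ and then $\dot a=0$ by linear independence of the $\nu_i(x)$. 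The eigenvalues of $S_v$ are $|v|_{\R^N}\kappa_i$, with $\kappa_i$ the principal curvatures of $M$ at $x$ in the direction $v/|v|_{\R^N}$, and $|v|_{\R^N}<\tau\le\rho\le\rho(x)\le 1/|\kappa_i|$ by \cref{theorem:reach-characterisation}, so no eigenvalue of $S_v$ equals $1$; hence $\operatorname{Id}-S_v$ is invertible, $dE_{(x,v)}$ is an isomorphism of $N$-dimensional spaces, and $E$ is a diffeomorphism. (Alternatively, continuity of $E^{-1}$ follows directly from continuity of the nearest-point map $p\mapsto\pi_M(p)$, which one checks by a subsequence argument using that $M$ is closed and $d(\cdot,M)$ is continuous.)

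I expect the main obstacle to be the injectivity step, specifically the claim that the footpoint of a pair in $V^\tau$ is genuinely the nearest point of $M$ to its image: this is exactly where the quantitative content of the reach enters, through the angle estimate of \cite{Aamari2019}. By contrast, once one recalls that the reach never exceeds the minimal radius of curvature (\cref{theorem:reach-characterisation}), the local-diffeomorphism computation is routine differential geometry, and surjectivity and smoothness are immediate.
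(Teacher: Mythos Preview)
Your proof is correct and follows the same overall structure as the paper (smoothness and surjectivity, then injectivity, then local diffeomorphism), with surjectivity done identically and the local diffeomorphism step agreeing in content with the paper's citation of Milnor's focal-point criterion, only spelt out via the shape operator.

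The one genuine difference is the injectivity argument. The paper assumes $E(x,v)=E(y,w)=z$ with $x\neq y$, considers the ball of radius $|w|$ around $z$, shows via \cite[Theorem~4.18]{Federer1959} that this ball meets $M$ only at $y$ on its boundary, and then derives a contradiction to \cref{theorem:reach-characterisation} by producing a geodesic in $M$ (either closed inside the ball, or joining $x$ to $y$) with curvature exceeding $1/\tau$, via an argument in the spirit of \cref{proposition:curve-with-distance-maximum-has-large-curvature}. Your route is shorter and more direct: you show that for any $(x,v)\in V^\tau$ the footpoint $x$ is already the nearest point of $M$ to $x+v$, by combining the elementary expansion of $|p-y|^2$ with the Federer/Aamari estimate $d(y-x,T_xM)\le|y-x|^2/(2\tau)$ to force $|v|\ge\tau$ unless $y=x$. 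This avoids the case split on connected components and the auxiliary curvature-of-a-trapped-geodesic lemma, at the cost of leaning a little harder on the quantitative angle bound. Both proofs ultimately rest on the same two external inputs (the Federer/Aamari tangent-space estimate and the bound $\tau\le\rho$ from \cref{theorem:reach-characterisation}), so there is no circularity issue in either.
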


\begin{proof}
    \leavevmode
    \begin{enumerate}
        \item 
        $E$ is injective.

        Assume $(x,v) \neq (y,w) \in V^\tau$ and let $z := E(x,v) = E(y,w)$. If $x=y$, then clearly, $v=w$, so we can assume without loss of generality that $x\neq y$ and $|v| \leq |w|$.
        Denote by $B$ the open ball with radius $|w|$ around $z$, so that $B$ is tangential to $M$ at $y$.

        We claim that $\partial B \cap M = \{y\}$.
        Otherwise, there exists $u \in \partial B \cap M$ with $u \neq y$ and
        \[
        \tau
        \leq
        \frac{|u-y|^2}{2d(u-y, T_y M)}
        =
        |w|
        <
        \tau.
        \]
        In the first inequality, we used \cite[Theorem 4.18]{Federer1959}, and for the equality, we used that the fraction can be interpreted as the radius of a ball tangent to $M$ at $y$ passing through $u$, see \cite[Figure 1]{Aamari2019}.
        This is a contradiction, so the claim holds.

        Let $C$ be the connected component of $M$ containing $x$. There are two possibilities: $C \cap \partial B = \emptyset$ or $C \cap \partial B = \{y\}$.
        
 In the first case, let $\gamma: \R \rightarrow C$ be a closed unit speed geodesic in $C$.
        By \cref{proposition:curve-with-distance-maximum-has-large-curvature} we have that there exists $t_0 \in \R$ such that
        \[
        |\gamma''(t_0)|_{\R^N} \geq \frac{1}{|w|} > \frac{1}{\tau},
        \]
        which is a contradiction to \cref{theorem:reach-characterisation}.

In the second case, let $\gamma$ be a unit speed geodesic joining $x$ and $y$ (since $M$ is complete and $C$ is connected, it exists by Hopf-Rinow's theorem). By a translation of $M$ in $\R^N$ we can assume that the ball $B$ is centred at $z=0$. Consider the function $u(t) = (|\gamma(t)|^2) '$, since it is a smooth function vanishing at $x = \gamma(t_1)$ and $y=\gamma(t_2)$, there is a point $t_0 \in [t_1,t_2]$ such that $u'(t_0)=0$, that is,
\begin{equation*}
    0 = u(t_0)' = 2 \langle \gamma', \gamma' \rangle + 2 \langle \gamma''(t_0), \gamma(t_0) \rangle \geq 2 - 2 |\gamma''(t_0)|_{\R^N} |\gamma(t_0)|_{\R^N}.
\end{equation*}
Therefore,
\begin{equation*}
    |\gamma''(t_0)|_{\R^N} \geq \frac{1}{|\gamma(t_0)|_{\R^N}} > \frac{1}{\tau}
    \end{equation*}
        which is a contradiction to \cref{theorem:reach-characterisation}.

        \item 
        $E$ is surjective.

        Given $y \in U^\tau_M$, let $x \in M$ be the unique closest point to $y$ in $M$.
        We claim that $(x, y-x) \in N_yM$.
        To show this, we must check that $\<y - x, v \>=0$ for all $v \in T_x M$.
        Fix $v \in T_x M$.
        Because $M$ has no boundary, there exists $\gamma:(-\epsilon,\epsilon) \rightarrow M$ such that $\gamma(0)=x$ and $\gamma'(0)=v$.
        By assumption, $x$ is the unique closest point to $y$, thus
        \begin{align*}
            0
            &=
            \frac{\d}{\d t}|_{t=0} d(\gamma(t),y)^2
            =
            2 \< v, x-y \>,
        \end{align*}
        which proves the claim.
        Thus $(x, y-x) \in N_yM$, and therefore $E(x, y-x)=y$, which shows surjectivity.

        \item 
        $E$ is a diffeomorphism.

Since $E$ is a bijective, smooth map, it is enough to show that it is a local diffeomorphism (\cite[Proposition 4.6 (f)]{lee2012smooth}. 
        In other words, it suffices to show that $\d_{(x,v)} E$ is invertible for all $(x,v) \in V^\tau$. As with the standard tubular neighbourhood theorem, one can check that $\d_{(x,0)} E$ maps $T_{(x,0)} (M\times \{0\})$ bijectively to $T_x M \subset T_x \R^n$ and  $T_{(x,0)} (\{x\} \times N_xM)$ to $T_xNM \subset T_x \R^{N-n}$.
        For $v\neq 0$, by \cite[p.34]{milnor1963morse} we have that $\d_{(x,v)} E$ is not invertible precisely when $e=E(x,v)$ is a focal point of $(M,x)$, that is, $e=x+K_i^{-1} \frac{e-x}{|e-x|_{\R^N}}$ for some $i \in \{1,\dots,n\}$, where $K_i^{-1}$ are the  principal radii of curvature of $M$ at $x$ in the direction $v$. This equation holds only when $|e-x|_{\R^N}
        =
        K_i^{-1}$, which by Theorem \ref{theorem:reach-characterisation} is larger or equal than the reach.
        \qedhere

    \end{enumerate}
\end{proof}

\bibliographystyle{apalike}
\bibliography{library}

\begin{thebibliography}{}

\bibitem[Aamari et~al., 2019]{Aamari2019}
Aamari, E., Kim, J., Chazal, F., Michel, B., Rinaldo, A., and Wasserman, L. (2019).
\newblock Estimating the reach of a manifold.
\newblock {\em Electron. J. Stat.}, 13(1):1359--1399.

\bibitem[Aggarwal et~al., 2024]{Aggarwal2024}
Aggarwal, D., He, Y.-H., Heyes, E., Hirst, E., Earp, H. N.~S., and Silva, T.~S. (2024).
\newblock Machine learning sasakian and {$G_2$} topology on contact {C}alabi-{Y}au $7$-manifolds.
\newblock {\em Physics Letters B}, 850:138517.

\bibitem[Attali et~al., 2009]{Attali2009}
Attali, D., Boissonnat, J.-D., and Edelsbrunner, H. (2009).
\newblock Stability and computation of medial axes: a state-of-the-art report.
\newblock In {\em Mathematical foundations of scientific visualization, computer graphics, and massive data exploration}, Math. Vis., pages 109--125. Springer, Berlin.

\bibitem[Basu, 2006]{Basu2006}
Basu, S. (2006).
\newblock Computing the first few {B}etti numbers of semi-algebraic sets in single exponential time.
\newblock {\em J. Symbolic Comput.}, 41(10):1125--1154.

\bibitem[Breiding and Timme, 2025]{Breiding2025}
Breiding, P. and Timme, S. (2025).
\newblock The reach of a plane curve.
\newblock \url{ https://www.JuliaHomotopyContinuation.org/examples/reach-curve/ }.
\newblock Accessed: 11 Mar 2025.

\bibitem[Buchberger, 1985]{Buchberger1985}
Buchberger, B. (1985).
\newblock An algorithmic method in polynomial ideal theory, in “{N.K. B}ose (ed): Recent trends in multidimensional system theory”.
\newblock {\em Reidel Publishing Co}, 1(985):1--84.

\bibitem[Buckmaster et~al., 2022]{Buckmaster2022}
Buckmaster, T., Cao-Labora, G., and G{\'o}mez-Serrano, J. (2022).
\newblock Smooth imploding solutions for {3D} compressible fluids.
\newblock {\em arXiv preprint arXiv:2208.09445}.

\bibitem[B\"{u}rgisser et~al., 2019]{Burgisser2019}
B\"{u}rgisser, P., Cucker, F., and Lairez, P. (2019).
\newblock Computing the homology of basic semialgebraic sets in weak exponential time.
\newblock {\em J. ACM}, 66(1):Art. 5, 30.

\bibitem[Choi and Wang, 1983]{Choi1983}
Choi, H.~I. and Wang, A.~N. (1983).
\newblock A first eigenvalue estimate for minimal hypersurfaces.
\newblock {\em J. Differential Geom.}, 18(3):559--562.

\bibitem[Cuevas et~al., 2014]{Cuevas2014}
Cuevas, A., Llop, P., and Pateiro-L\'{o}pez, B. (2014).
\newblock On the estimation of the medial axis and inner parallel body.
\newblock {\em J. Multivariate Anal.}, 129:171--185.

\bibitem[Di~Rocco et~al., 2023]{2023DiRocco}
Di~Rocco, S., Edwards, P.~B., Eklund, D., G\"afvert, O., and Hauenstein, J.~D. (2023).
\newblock Computing geometric feature sizes for algebraic manifolds.
\newblock {\em SIAM J. Appl. Algebra Geom.}, 7(4):716--741.

\bibitem[Di~Rocco et~al., 2022]{DiRocco2022}
Di~Rocco, S., Eklund, D., and G\"{a}fvert, O. (2022).
\newblock Sampling and homology via bottlenecks.
\newblock {\em Math. Comp.}, 91(338):2969--2995.

\bibitem[Douglas et~al., 2008]{Douglas2008}
Douglas, M.~R., Karp, R.~L., Lukic, S., and Reinbacher, R. (2008).
\newblock Numerical {C}alabi-{Y}au metrics.
\newblock {\em J. Math. Phys.}, 49(3):032302, 19.

\bibitem[Douglas et~al., 2024]{Douglas2024}
Douglas, M.~R., Platt, D., and Qi, Y. (2024).
\newblock Harmonic $1$-forms on real loci of {C}alabi-{Y}au manifolds.
\newblock {\em arXiv preprint arXiv:2405.19402}.

\bibitem[Dufresne et~al., 2019]{Dufresne2019}
Dufresne, E., Edwards, P., Harrington, H., and Hauenstein, J. (2019).
\newblock Sampling real algebraic varieties for topological data analysis.
\newblock In {\em 2019 18th IEEE International Conference On Machine Learning And Applications (ICMLA)}, pages 1531--1536. IEEE.

\bibitem[Eiras et~al., 2023]{Eiras2023}
Eiras, F., Bibi, A., Bunel, R., Dvijotham, K.~D., Torr, P., and Kumar, M.~P. (2023).
\newblock Provably correct physics-informed neural networks.

\bibitem[Eisenbud et~al., 2001]{Eisenbud2001}
Eisenbud, D., Grayson, D.~R., Stillman, M., and Sturmfels, B. (2001).
\newblock {\em Computations in algebraic geometry with Macaulay 2}, volume~8.
\newblock Springer Science \& Business Media.

\bibitem[Federer, 1959]{Federer1959}
Federer, H. (1959).
\newblock Curvature measures.
\newblock {\em Trans. Amer. Math. Soc.}, 93:418--491.

\bibitem[Feng and Schumacher, 1999]{Feng1999}
Feng, S.-T. and Schumacher, G. (1999).
\newblock On the diameter of plane curves.
\newblock {\em Compositio Math.}, 119(3):331--334.

\bibitem[Golub and Van~Loan, 2013]{Golub2013}
Golub, G.~H. and Van~Loan, C.~F. (2013).
\newblock {\em Matrix computations}.
\newblock JHU press.

\bibitem[G\'{o}mez-Serrano, 2019]{GomezSerrano2019}
G\'{o}mez-Serrano, J. (2019).
\newblock Computer-assisted proofs in {PDE}: a survey.
\newblock {\em SeMA J.}, 76(3):459--484.

\bibitem[G\'{o}mez-Serrano and Granero-Belinch\'{o}n, 2014]{GomezSerrano2014}
G\'{o}mez-Serrano, J. and Granero-Belinch\'{o}n, R. (2014).
\newblock On turning waves for the inhomogeneous {M}uskat problem: a computer-assisted proof.
\newblock {\em Nonlinearity}, 27(6):1471--1498.

\bibitem[Gäfvert, 2020]{Gäfvert_2020}
Gäfvert, O. (2020).
\newblock Sampling and homology via bottlenecks.

\bibitem[Huisken, 1998]{Huisken1998}
Huisken, G. (1998).
\newblock A distance comparison principle for evolving curves.
\newblock {\em Asian J. Math.}, 2(1):127--133.

\bibitem[Katz et~al., 2017]{Katz2017}
Katz, G., Barrett, C., Dill, D.~L., Julian, K., and Kochenderfer, M.~J. (2017).
\newblock Reluplex: An efficient smt solver for verifying deep neural networks.
\newblock In {\em Computer Aided Verification: 29th International Conference, CAV 2017, Heidelberg, Germany, July 24-28, 2017, Proceedings, Part I 30}, pages 97--117. Springer.

\bibitem[Kol\'{a}\v{r} et~al., 1993]{Kolar1993}
Kol\'{a}\v{r}, I., Michor, P.~W., and Slov\'{a}k, J. (1993).
\newblock {\em Natural operations in differential geometry}.
\newblock Springer-Verlag, Berlin.

\bibitem[Lee, 2012]{lee2012smooth}
Lee, J.~M. (2012).
\newblock {\em Introduction to Smooth manifolds}, volume 218 of {\em Graduate Texts in Mathematics}.
\newblock Springer, second edition.

\bibitem[Li, 2012]{Li2012}
Li, H. (2012).
\newblock Convergence of {L}agrangian mean curvature flow in {K}\"{a}hler-{E}instein manifolds.
\newblock {\em Math. Z.}, 271(1-2):313--342.

\bibitem[Li and Yau, 1980]{Li1980}
Li, P. and Yau, S.~T. (1980).
\newblock Estimates of eigenvalues of a compact {R}iemannian manifold.
\newblock In {\em Geometry of the {L}aplace operator ({P}roc. {S}ympos. {P}ure {M}ath., {U}niv. {H}awaii, {H}onolulu, {H}awaii, 1979)}, volume XXXVI of {\em Proc. Sympos. Pure Math.}, pages 205--239. Amer. Math. Soc., Providence, RI.

\bibitem[Ling and Lu, 2010]{Ling2010}
Ling, J. and Lu, Z. (2010).
\newblock Bounds of eigenvalues on {R}iemannian manifolds.
\newblock In {\em Trends in partial differential equations}, volume~10 of {\em Adv. Lect. Math. (ALM)}, pages 241--264. Int. Press, Somerville, MA.

\bibitem[Milnor, 1963]{milnor1963morse}
Milnor, J.~W. (1963).
\newblock {\em Morse theory}.
\newblock Number~51. Princeton university press.

\bibitem[Nakao et~al., 2019]{Nakao2019}
Nakao, M.~T., Plum, M., and Watanabe, Y. (2019).
\newblock {\em Numerical verification methods and computer-assisted proofs for partial differential equations}, volume~53 of {\em Springer Series in Computational Mathematics}.
\newblock Springer, Singapore.

\bibitem[Niyogi et~al., 2008]{Niyogi2008}
Niyogi, P., Smale, S., and Weinberger, S. (2008).
\newblock Finding the homology of submanifolds with high confidence from random samples.
\newblock {\em Discrete Comput. Geom.}, 39(1-3):419--441.

\bibitem[O'Neill, 1983]{ONeill1983}
O'Neill, B. (1983).
\newblock {\em Semi-Riemannian geometry with applications to relativity}.
\newblock Academic press.

\bibitem[Pilarczyk and Real, 2015]{Pilarczyk2015}
Pilarczyk, P. and Real, P. (2015).
\newblock Computation of cubical homology, cohomology, and (co)homological operations via chain contraction.
\newblock {\em Adv. Comput. Math.}, 41(1):253--275.

\bibitem[Rogers, 1963]{Rogers1963}
Rogers, C. (1963).
\newblock Covering a sphere with spheres.
\newblock {\em Mathematika}, 10(2):157--164.

\bibitem[Topping, 2008]{Topping2008}
Topping, P. (2008).
\newblock Relating diameter and mean curvature for submanifolds of {E}uclidean space.
\newblock {\em Comment. Math. Helv.}, 83(3):539--546.

\bibitem[van~de Geijn and Myers, 2023]{Geijn2023}
van~de Geijn, R. and Myers, M. (2023).
\newblock {\em Advanced Linear Algebra. Foundations to Frontiers}.

\bibitem[Verger-Gaugry, 2005]{VergerGaugry2005}
Verger-Gaugry, J.-L. (2005).
\newblock Covering a ball with smaller equal balls in $\mathbb{R}^n$.
\newblock {\em Discrete \& Computational Geometry}, 33:143--155.

\end{thebibliography}

\noindent{\small\sc Imperial College London, Department of Mathematics, 180 Queen's Gate, South Kensington, London SW7 2RH, the United Kingdom 

\noindent E-mail: {\tt \href{mailto:d.platt@imperial.ac.uk}{d.platt@imperial.ac.uk}}

\vskip 8pt

\noindent{\small\sc 4i Intelligent Insights, Tecnoincubadora Marie Curie, PCT Cartuja, 41092 Sevilla, Spain

\noindent E-mail: {\tt \href{mailto:r.sanchez@4i.ai}{r.sanchez@4i.ai}}

\end{document}